\newcommand{\Spec}{\operatorname{Spec}}
\newcommand{\sma}{{\scriptstyle{\wedge}\,}}
\newcommand{\coker}{\operatorname{coker}}
\newcommand{\GW}{\mathbf{GW}}
\newcommand{\real}{{\mathbb R}}
\newcommand{\PP}{\mathbb{P}}
\newcommand{\cplx}{{\mathbb C}}
\newcommand{\Z}{{\mathbb Z}}
\newcommand{\N}{{\mathbb N}}
\newcommand{\A}{{\mathbb A}}
\newcommand{\OO}{{\mathcal O}}
\newcommand{\Osc}{{\mathscr{O}}}
\newcommand{\Esc}{{\mathscr{E}}}
\newcommand{\aone}{{\mathbb A}^1}
\newcommand{\pone}{{\mathbb P}^1}
\newcommand{\gm}[1]{{{\mathbb G}_{m}^{#1}}}
\renewcommand{\Lc}{{\mathcal L}}
\newcommand{\et}{\text{\'et}}
\newcommand{\bpi}{\bm{\pi}}
\newcommand{\piaone}{{\bpi}^{\aone}}
\newcommand{\Nis}{{\operatorname{Nis}}}
\newcommand{\Zar}{\operatorname{Zar}} 
\newcommand{\CHW}{{\widetilde{\mathrm{CH}}}}
\newcommand{\CH}{{\mathrm{CH}}}
\newcommand{\Ch}{{\mathrm{Ch}}}
\newcommand{\Sm}{\mathrm{Sm}}
\newcommand{\Ab}{\mathrm{Ab}}
\newcommand{\Gr}{\mathrm{Gr}}
\newcommand{\K}{{{\mathbf K}}}
\newcommand{\KMW}{\K^{\mathrm{MW}}}
\newcommand{\KM}{\K^{\mathrm M}}
\newcommand{\Pic}{\operatorname{Pic}}
\newcommand{\Hr}{{\mathrm {H}}}
\newcommand{\Ibf}{\mathbf{I}}
\newcommand{\ZZ}{\Z}
\newcommand{\Rb}{\mathbb{R}}
\newcommand{\Fsc}{\mathscr{F}}
\newcommand{\Wr}{\mathrm{W}}
\newcommand{\Kr}{\mathrm{K}}
\newcommand{\GWr}{\mathrm{GW}}
\newcommand{\GWc}{\mathcal{GW}}
\newcommand{\Abf}{\mathbf{A}}
\newcommand{\Cr}{\mathrm{C}}
\newcommand{\RS}{\mathrm{RS}}
\newcommand{\Ar}{\mathrm{A}}
\newcommand{\Zb}{\mathbb{Z}}
\newcommand{\Cb}{\mathbb{C}}
\newcommand{\Bbf}{\mathbf{B}}
\newcommand{\Kbf}{\mathbf{K}}
\newcommand{\Cbf}{\mathbf{C}}
\newcommand{\Er}{\mathrm{E}}
\newcommand{\Mbf}{\mathbf{M}}
\newcommand{\Fr}{\mathrm{F}}
\newcommand{\Abb}{\mathbb{A}}
\renewcommand{\setminus}{\smallsetminus}
\newcommand{\Addresses}{{
\bigskip
\footnotesize

\noindent A.~Asok, Department of Mathematics, University of Southern California, 3620 S.~Vermont Ave., Los Angeles, CA 90089-2532, United States; E-mail address: asok@usc.edu
\medskip

\noindent J.~Fasel,  Institut Fourier, Université Grenoble Alpes, CS40700, Grenoble 38058 Cedex 9, France; E-mail address: jean.fasel@univ-grenoble-alpes.fr
\medskip 

\noindent S.~Lerbet,  DMA, École normale supérieure, Université PSL, CNRS, 75005 Paris, France; E-mail address: samuel.lerbet@ens.fr

}}
\newcounter{intro}
\theoremstyle{plain}
\newtheorem{thm}{Theorem}[subsection]
\newtheorem{lem}[thm]{Lemma}
\newtheorem{cor}[thm]{Corollary}
\newtheorem{prop}[thm]{Proposition}
\newtheorem*{claim*}{Claim} 
\newtheorem*{question*}{Main question}
\newtheorem*{thm*}{Theorem}
\newtheorem*{problem*}{Problem}
\newtheorem{thmintro}{Theorem}
\theoremstyle{definition}
\newtheorem{defn}[thm]{Definition}
\theoremstyle{remark}
\newtheorem{rem}[thm]{Remark}
\newtheorem{ex}[thm]{Example}
\numberwithin{equation}{subsection}
\begin{document}
\pagestyle{fancy}
\renewcommand{\sectionmark}[1]{\markright{\thesection\ #1}}
\fancyhead{}
\fancyhead[LO,R]{\bfseries\footnotesize\thepage}
\fancyhead[LE]{\bfseries\footnotesize\rightmark}
\fancyhead[RO]{\bfseries\footnotesize\rightmark}
\chead[]{}
\cfoot[]{}
\setlength{\headheight}{1cm}

\author{Aravind Asok\thanks{Aravind Asok was partially supported by National Science Foundation Awards DMS-1802060 and DMS-2101898} \and 
Jean Fasel\thanks{Jean Fasel was partially supported by the ANR project HQDIAG, grant number ANR-21-CE40-0015} \and Samuel Lerbet\thanks{Samuel Lerbet was partially supported by the ANR project HQDIAG, grant number ANR-21-CE40-0015, and by the ANR project CYCLADES, grant number ANR-23-CE40-0011}}

\title{{\bf Splitting vector bundles over real algebraic varieties}}
\date{}


\maketitle

\begin{abstract}
Suppose $X$ is a smooth affine real variety and $\mathscr{E}$ is a vector bundle over $X$.  We analyze the problem of splitting off a free rank one summand from $\mathscr{E}$ in corank $0$ and $1$.  The problem in corank $0$ can be viewed as the search for a real analog of Murthy's celebrating splitting theorem in the algebraically closed case: to wit, beyond the vanishing of the top Chern class in Chow theory, are the obstructions to splitting ``purely topological''?  In a sense, the answer in this case is yes, and we give a proof, using motivic techniques, of a mild extension of the results of Bhatwadekar-Sridharan and Bhatwadekar--Das--Mandal.  

In corank $1$, in the algebraically closed situation, Murthy's splitting conjecture (now a theorem in characteristic $0$) predicts that the vanishing of the top Chern class in Chow theory is the only obstruction to splitting off a free rank $1$ summand, and we can search for a suitable ``real'' analog of this assertion.  We observe that several natural guesses for a ``real'' analog of Murthy's splitting conjecture cannot be true, i.e., that the situation over the real numbers is rather complicated.  
\end{abstract}

\section*{Introduction}
Assume $M$ is a smooth manifold of dimension $d$ and $E$ is a rank $r$ (real) vector bundle on $M$.  If $r > d$, general position arguments imply that $E$ admits a nowhere vanishing section or, equivalently, splits off a free rank $1$ summand.  By contrast, if $r \leq d$ then there is a well-defined {\em primary} obstruction to $E$ splitting off a trivial rank $1$ summand: the vanishing of the Euler class, which can be identified as the cohomology class Poincar\'e dual to a suitably generic section.  If $r = d$, then the vanishing of the primary obstruction is necessary and sufficient to guarantee the existence of a nowhere vanishing section.  If $r > d$, then there are additional obstructions to existence of a nowhere vanishing section beyond the vanishing of the Euler class.  In the early 1950s, at least under the additional assumption that $E$ is oriented, S.D. Liao identified an explicit secondary obstructions \cite{Liao1954obstructions} for bundles with vanishing Euler class.  The extension of Liao's result to not necessarily oriented bundles seems not to be available in the literature, but has attained the status of folklore, and cast in the language of modern obstruction theory presents no essential difficulties.  

For algebraic vector bundles on smooth affine varieties over a field $k$, there is a story that is inspired by the one just described, but where numerous additional ``arithmetic'' complications arise depending on $k$.  From the standpoint of the analogy between topological vector bundles and vector bundles on affine varieties, which goes back to J.-P. Serre, these obstructions are rather efficiently described using the language of motivic homotopy theory. We refer the reader to Section \ref{sec:motivicsplitting} (or \cite{Asok22}) for further information, only mentioning for now that the same terminology holds in that case: The primary obstruction is called the Euler class, and its vanishing guarantees the well-definedness of a (motivic) secondary obstruction.     

Assume that $X$ is a smooth affine variety of dimension $d$ over an algebraically closed field $\overline k$. In that case, a famous theorem of M.P. Murthy (\cite[Remark 3.6 and Theorem 3.7]{Murthy94}) shows that the only obstruction to split off a trivial rank $1$ summand from a rank $d$ vector bundle on $X$ is the vanishing of the top Chern class.  

\begin{thm*}[Murthy]
Let $X$ be a smooth affine variety of dimension $d$ over an algebraically closed field $k$, and let $\mathscr{E}$ be a vector bundle of rank $d$ over $X$. Then $\mathscr{E}$ splits off a free summand of rank $1$ if and only if the top Chern class $c_d(\mathscr{E}) \in CH^d(X)$ vanishes.
\end{thm*}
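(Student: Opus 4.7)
The ``only if'' direction is immediate from the Whitney sum formula: if $\mathscr{E} \cong \mathscr{E}' \oplus \OO_X$ with $\mathscr{E}'$ of rank $d-1$, then $c_d(\mathscr{E}') = 0$ for rank reasons and hence $c_d(\mathscr{E}) = 0$. For the converse, the plan is to package splitting as an obstruction-theoretic problem in motivic homotopy theory, in the spirit of Section~\ref{sec:motivicsplitting}. Splitting off a free rank one summand from $\mathscr{E}$ amounts to lifting the classifying map $X \to B\GL_d$ through $B\GL_{d-1} \to B\GL_d$, and by Morel's theorem the sole obstruction for a rank $d$ bundle on a smooth affine $d$-fold is the motivic Euler class $e(\mathscr{E}) \in H^d(X, \KMW_d(\det \mathscr{E}^\vee))$. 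It therefore suffices to show that $e(\mathscr{E})$ vanishes if and only if its image $c_d(\mathscr{E}) \in H^d(X, \KM_d) = \CH^d(X)$ vanishes.

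The comparison is governed by the short exact sequence of Nisnevich sheaves
\[
0 \longrightarrow \Ibf^{d+1}(\det \mathscr{E}^\vee) \longrightarrow \KMW_d(\det \mathscr{E}^\vee) \longrightarrow \KM_d \longrightarrow 0,
\]
whose cohomology long exact sequence makes surjectivity of the comparison map automatic (since $\dim X = d$ kills $H^{d+1}$). The problem thus reduces to proving $H^d(X, \Ibf^{d+1}(\det \mathscr{E}^\vee)) = 0$, which I would attack via the Gersten resolution. The degree-$i$ term of this resolution is supported on codimension-$i$ points with coefficients in $\Ibf^{d+1-i}(\kappa(x))$ (the line-bundle twist being harmless since the fibre is one-dimensional), and each residue field $\kappa(x)$ is finitely generated of transcendence degree $d-i$ over $k$, so has $2$-cohomological dimension $d-i$. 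By the Milnor conjecture, $\Ibf^n(\kappa(x))/\Ibf^{n+1}(\kappa(x)) \cong \KM_n(\kappa(x))/2 = 0$ for $n > d-i$; combined with the Arason--Pfister Hauptsatz $\bigcap_n \Ibf^n = 0$, this forces $\Ibf^n(\kappa(x)) = 0$ for all $n > d-i$. Since $d+1-i > d-i$ always, every term of the Gersten complex vanishes, and so does the entire cohomology of $\Ibf^{d+1}$ on $X$.

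The main obstacle is not the cohomological computation, which is essentially formal once one invokes the Milnor conjecture, but rather Morel's theorem that the motivic Euler class is a \emph{complete} (not merely primary) obstruction to splitting on smooth affine $d$-folds. That input, supplied by the motivic framework of Section~\ref{sec:motivicsplitting}, is what converts the vanishing of $H^d(X, \Ibf^{d+1}(\det \mathscr{E}^\vee))$ into an actual geometric splitting of $\mathscr{E}$.
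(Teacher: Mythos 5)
Your argument is correct, but note that the paper does not prove this statement at all: it is quoted as Murthy's theorem with a citation to \cite{Murthy94}, and Murthy's own proof is purely commutative-algebraic (Eisenbud--Evans/addition--subtraction arguments showing that over an algebraically closed field every zero-cycle class obstruction can be realized and cancelled). What you give instead is the standard modern re-derivation through motivic obstruction theory, which the paper itself alludes to when it remarks that ``the top Chern class coincides with the Euler class.'' Your two inputs are both sound: Morel's theorem that for a rank $d$ bundle on a smooth affine $d$-fold (with $d \geq 2$; the case $d=1$ is trivial since $\CH^1 = \Pic$) the Euler class in $\CHW^d(X,\det\mathscr{E}^\vee)$ is the complete obstruction, and the vanishing of $\Hr^*(X,\Ibf^{d+1}(\Lc))$, which makes the comparison map $\CHW^d(X,\Lc)\to\CH^d(X)$ an isomorphism (injective by your Gersten computation, surjective because the cokernel lives in $\Hr^{d+1}$, which dies for dimension reasons). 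Two small refinements: you do not actually need the Milnor conjecture for the Gersten vanishing --- by Tsen--Lang, each residue field $\kappa(x)$ with $x\in X^{(i)}$ is a $C_{d-i}$ field, so every $n$-fold Pfister form with $n > d-i$ is isotropic, hence hyperbolic, and since $\Ibf^n$ is generated by such forms one gets $\Ibf^{d+1-i}(\kappa(x))=0$ directly; and the Witt-theoretic exact sequence $0\to\Ibf^{d+1}\to\KMW_d\to\KM_d\to 0$ as used here presumes characteristic $\neq 2$, so your proof recovers Murthy's theorem in that generality, whereas Murthy's original argument has no such restriction. The trade-off is the usual one: Murthy's proof is elementary but intricate, while yours outsources all the difficulty to Morel's representability and obstruction-theoretic machinery and then reduces the theorem to a formal cohomological vanishing.
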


In that case, the top Chern class coincides with the Euler class (e.g., \cite[Remark 33]{Morel08}) and this result can be recast in the above framework. If now $\mathscr{E}$ has corank $1$, i.e., has rank equal to $\mathrm{dim}(X)-1$, the same result holds, at least in characteristic $0$ (\cite{Asok12c},\cite{Asok23}).

\begin{thm*}
Let $X$ be a smooth affine variety of dimension $d\geq 2$ over an algebraically closed field $k$ of characteristic $0$, and let $\mathscr{E}$ be a vector bundle of rank $d-1$ over $X$. The bundle $\mathscr{E}$ splits off a free summand of rank $1$ if and only if the top Chern class $c_{d-1}(\mathscr{E}) \in CH^{d-1}(X)$ vanishes.
\end{thm*}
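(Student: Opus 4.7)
The plan is to recast the problem motivically. Splitting off a trivial rank one summand from $\mathscr{E}$ is equivalent to lifting its classifying map $X \to B\GL_{d-1}$ along $B\GL_{d-2} \to B\GL_{d-1}$; the $\A^1$-homotopy fiber of this map is motivically equivalent to $\A^{d-1}\setminus\{0\}$. The Moore--Postnikov tower of this fibration organizes obstructions to the lift. Since $\piaone_{d-2}(\A^{d-1}\setminus\{0\}) \cong \KMW_{d-1}$ and $X$ has Nisnevich cohomological dimension $d$, only two obstructions are a priori nonzero: a \emph{primary} obstruction in $\CHW^{d-1}(X,\det\mathscr{E}^\vee)$ and a \emph{secondary} obstruction in $\Hr^d_{\Nis}(X,\piaone_{d-1}(\A^{d-1}\setminus\{0\}))$.

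The primary obstruction is the motivic Euler class $e(\mathscr{E})$, whose image in $\CH^{d-1}(X)$ under the forgetful map $\CHW^{d-1}(X,\det\mathscr{E}^\vee) \to \CH^{d-1}(X)$ coincides with $c_{d-1}(\mathscr{E})$. The first task is to show that over an algebraically closed field of characteristic zero the vanishing of $c_{d-1}(\mathscr{E})$ forces the vanishing of $e(\mathscr{E})$; the discrepancy is governed by the cohomology of the sheaves $\Ibf^d$ of powers of the fundamental ideal, which one analyzes by combining Voevodsky's resolution of the Milnor conjecture (identifying $\Ibf^n/\Ibf^{n+1}$ with $\KM_n/2$) with the facts that $-1$ is a square and that the residue fields appearing in the Rost--Schmid resolution have bounded $2$-cohomological dimension.

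Once the primary obstruction is disposed of, one passes to the secondary obstruction. This step requires a workable description of $\piaone_{d-1}(\A^{d-1}\setminus\{0\})$, which is the first unstable refinement beyond the Milnor--Witt range and involves motivic analogues of the Hopf map $\eta$. Granted such a description, one argues that the corresponding $\Hr^d_{\Nis}(X,-)$ vanishes on smooth affine $d$-folds over algebraically closed fields of characteristic zero, or equivalently that any candidate secondary class lies in the orbit of the natural action of automorphisms of $\mathscr{E}$ on the set of lifts and can therefore be killed by modifying the reduction.

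The main obstacle will be precisely this final step: identifying $\piaone_{d-1}(\A^{d-1}\setminus\{0\})$ and proving that its top Nisnevich cohomology on $X$ vanishes (or acts trivially on lifts) is the technical heart of the argument. It leans crucially on both algebraic closure---which collapses Milnor--Witt type contributions---and characteristic zero---which makes Milnor conjecture machinery and lifting arguments available. It is exactly this input that breaks down over more general base fields, consistent with the remark in the introduction that the real analog of the corank one splitting statement is considerably more delicate.
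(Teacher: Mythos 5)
First, an important caveat: this theorem is not proved anywhere in the present paper. It is Murthy's splitting conjecture in characteristic zero, quoted from the references \cite{Asok12c} and \cite{Asok23} cited immediately before the statement; the paper only recalls the relevant obstruction-theoretic framework in \S\ref{sec:motivicsplitting} and \S\ref{section:secondary_obstructions}. Your outline is exactly the strategy of those references: realize splitting as a lifting problem along $\mathrm{BGL}_{d-2}\to\mathrm{BGL}_{d-1}$ with $\aone$-fiber $\A^{d-1}\setminus\{0\}$, observe that connectivity of the fiber together with $\mathrm{cd}(X)\leq d$ leaves only the primary obstruction (the Euler class in $\CHW^{d-1}(X,\det\Esc)$) and a secondary obstruction in $\Hr^d(X,\piaone_{d-1}(\A^{d-1}\setminus\{0\}))$, and then kill both using that $k$ is algebraically closed of characteristic $0$.

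That said, as written your text is a roadmap whose decisive steps are announced rather than executed, and you should be aware of what exactly has to be supplied. (i) For the primary obstruction, the kernel of $\CHW^{d-1}(X,\Lc)\to\CH^{d-1}(X)$ is the image of $\Hr^{d-1}(X,\Ibf^{d}(\Lc))$; over $\bar k$ one has $\Ibf^{d+1}=0$ on $X$ (all residue fields have $2$-cohomological dimension at most their transcendence degree), so $\Ibf^{d}\cong\KM_d/2$ by the Milnor conjecture, and one still needs $\Hr^{d-1}(X,\KM_d/2)=0$, which uses Artin vanishing for the \emph{affine} variety $X$ via Bloch--Ogus --- affineness enters here, not just the closure of $k$. (ii) The identification of $\piaone_{d-1}(\A^{d-1}\setminus\{0\})$ is the exact sequence \eqref{eqn:firstnontrivialhomotopysheaf}, $\KM_{d+1}/24\to\piaone_{d-1}(\A^{d-1}\setminus\{0\})\to\GW_{d}^{d-1}$ (with the modified versions for $d-1=2,3$); this is the deep theorem of the cited papers and cannot be ``argued'' in passing. (iii) Granting (ii), the vanishing of the secondary obstruction group over $\bar k$ follows from two divisibility facts: $\Hr^d(X,\KM_{d+1}/24)=0$ because $\kappa(x)^{\times}$ is divisible at closed points (the mechanism of Lemmas \ref{lem:a_divisible} and \ref{lem:odddivisible}), and $\Hr^d(X,\GW_{d}^{d-1}(\Lc))=0$ because by \eqref{eqn:firstpartobstruction} it is a quotient of $\Ch^d(X)=\CH^d(X)/2$, which vanishes since zero-cycles on a smooth affine variety over an algebraically closed field form a divisible group. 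Your proposal correctly locates the difficulty and correctly explains why the argument collapses over $\real$, but without (i)--(iii), supplied or properly cited, it is a plan rather than a proof.
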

     
Swan observed at roughly the inception of the theory of projective modules that the tangent bundle of the real $2$-sphere has trivial Chern class yet fails to be trivial, so ``direct'' analogs of the above theorems are false.  Of course, vanishing of the top Chern class in the Chow theory is a necessary condition for a vector bundle of rank $1 \leq r \leq d$ on a smooth affine variety of dimension $d$ to split off a trivial rank $1$ summand, and viewing the above results as paradigmatic examples of what might be possible, it is natural to ask for ``easily describable'' additional conditions that guarantee splitting results, say in corank $0$ and $1$.  

In corank $0$, which we refer to as the ``critical'' case, the above discussion provided one impetus for the search for an ``Euler class theory'' that accounted for the failure of the top Chern class to detect splitting.  Over the real numbers, the Euler class theory was linked to topological considerations in this case as well.  In in \cite{Bhatwadekar99,Bhatwadekar06}, the authors validate the principle that beyond the vanishing of $c_d(E)$, the only obstruction to splitting off a free summand of rank $1$ is the vanishing of the Euler class of the real vector bundle $\mathscr{E}(\real)$ over $X(\real)$ determined by $\mathscr{E}$. In other words, beyond the evidently necessary algebro-geometric obstruction, subsequent conditions depend only on the topology of the real points of $X$. This leads is to study the following question, which can be seen as a real analogue of the splitting theorem in the case of an algebraically closed field.


\begin{question*}
Assume $X$ is a smooth affine $\real$-variety of dimension $d$ and $\Esc$ is a rank $d-1$ vector bundle on $X$. Does $\Esc$ split off a trivial rank $1$ summand if and only if $0 = c_{d-1}(\Esc) \in \CH^{d-1}(X)$ and the real topological vector bundle $\Esc(\real)$ splits off a trivial rank $1$ summand (topologically)?  
\end{question*}

This question unfortunately has a negative answer and we investigate the subtlety of failure in a number of ways.  Our first result says that the vanishing of the top Chern class of a rank $d-1$ vector bundle $\mathscr{E}$ and of the Euler class of the associated real vector bundle $\mathscr{E}(\real)$ over $X(\real)$ are not even sufficient to guarantee that its (algebraic) Euler class vanishes. 

\begin{thmintro}[See Theorem`\ref{thm:realthreefolds}]
There exists an affine open subscheme $U$ of $\mathbb{P}_\real^3$ such that $U(\real)=\mathbb{P}^3(\real)$ and a vector bundle $\mathscr{E}$ of rank $2$ on $U$ having the following properties:
\begin{itemize}[noitemsep,topsep=1pt]
\item $\det(\mathscr{E})$ is trivial.
\item $c_2(\mathscr{E})=0$.
\item $e(\mathscr{E})\neq 0\in{\CHW}^2(U)$.
\item $\mathscr{E}(\real)\simeq \mathbb{P}^3(\real)\times \real^2$.
\end{itemize}
\end{thmintro}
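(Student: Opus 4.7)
The strategy is to exploit that the kernel of $\CHW^2(U) \to \CH^2(U)$, computed by $\Hr^2(U, \Ibf^3)$ via the standard exact sequence
$$\Hr^2(U, \Ibf^3) \longrightarrow \CHW^2(U) \longrightarrow \CH^2(U) \longrightarrow 0,$$
can contain Witt-theoretic classes over $\real$ that are detected neither by Chow theory nor by the real realisation map. The plan is to engineer $(U, \mathscr{E})$ so that the Euler class of $\mathscr{E}$ realises a nonzero such class, while $c_2$ and all topological obstructions vanish.

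For $U$, I would take $U = \PP^3_\real \setminus Z$ where $Z$ is a closed subscheme with no real points, for instance cut out by a sum of squares (so $U(\real) = \PP^3(\real) = \Rb P^3$ automatically), and use the localisation sequence from $\PP^3_\real$ to control $\CH^2(U)$. For $\mathscr{E}$, I would take a rank $2$ bundle with trivial determinant obtained from an explicit presentation---the kernel of a unimodular surjection $\OO_U^{3} \twoheadrightarrow \OO_U$, or an extension built from ideal sheaves of subschemes contained in the complex locus of $Z$. A generic section of $\mathscr{E}$ should then have zero scheme supported on $Z$, hence without real points, which immediately yields triviality of $\det\mathscr{E}$ and $c_2(\mathscr{E}) = 0$ in $\CH^2(U)$ (by the localisation sequence).

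Topological triviality of $\mathscr{E}(\real)$ on $\Rb P^3$ is then automatic: a rank $2$ bundle there is trivial iff $w_1 = w_2 = 0$, and under the real cycle class map $\CH^i(U)/2 \to \Hr^i(\Rb P^3; \Zb/2)$ one has $c_i \mapsto w_i$, so $c_1 = c_2 = 0$ forces topological triviality. The heart of the argument is therefore to show $e(\mathscr{E}) \neq 0$ in $\CHW^2(U)$: I would lift the Euler class to $\Hr^2(U, \Ibf^3)$ via the exact sequence above and detect its nonvanishing on the Gersten complex, using that the Witt-theoretic residue at a complex component of $Z$ produces a class that is neither a coboundary from codimension one nor killed by the differential to codimension three. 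This is the principal obstacle: the scheme $Z$ must be chosen small enough in $\Ibf^3$-cohomology for such a residue class to survive, yet large enough and well-positioned enough that a genuinely nontrivial unimodular presentation of $\mathscr{E}$ exists on $U$ whose Euler class has that residue; the closing step is an explicit computation on the Gersten complex verifying nonvanishing.
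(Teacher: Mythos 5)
Your global strategy is the right one and agrees with the paper's: produce a class in the kernel of $\CHW^2(U)\to\CH^2(U)$ that also dies under real realization, and realize it as the Euler class of a rank $2$ bundle on the complement $U$ of a real-point-free closed subscheme. But the central step --- proving that such a nonzero class exists and that your bundle's Euler class hits it --- has a genuine gap. First, exhibiting a nonzero lift in $\Hr^2(U,\Ibf^3)$ and checking residues on the Gersten complex does not prove $e(\mathscr{E})\neq 0$ in $\CHW^2(U)$: the map $\Hr^2(U,\Ibf^3)\to\CHW^2(U)$ need not be injective, so you must control its image, not its source. Worse, the paper observes that for the bundle it constructs the image of $e(\mathscr{E})$ in $\Hr^2(U,\Ibf^2)$ is \emph{zero} (that group is a quotient of $\Hr^2(\mathbb{P}^3_\real,\Ibf^2)=\Z/2$ and the class is an even multiple of the generator), so the class is invisible to the kind of $\Ibf$-cohomological residue computation you propose. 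The paper's actual mechanism is arithmetic and entirely absent from your sketch: it takes $Z=X$ a smooth hypersurface of even degree $d=2n$ with $n$ \emph{odd}, with $X(\real)=\emptyset$ and $\CH^1(X)$ generated by the hyperplane section (a real Noether--Lefschetz argument, needing $d\geq 4$), shows by a Steenrod-square computation --- reducing to the mod-$2$ count of the $n$ points of $X\cap L$ for a real line $L$ --- that $\CHW^1(X)\to\CH^1(X)$ has image $2\CH^1(X)$, and then reads off from the localization sequence that $\CHW^2(U)\cong\Z/2d$ surjects onto $\CH^2(U)\cong\Z/d$ with kernel generated by $\overline{d}$, a class that dies in $\Hr^2(\mathbb{P}^3(\real),\Z)=\Z/2$ because $d$ is even. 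With your suggested $Z$ ``cut out by a sum of squares'' (e.g.\ the anisotropic quadric) the hypothesis on $\CH^1(X)$ fails and no nonzero kernel is guaranteed.

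Two further points would also need repair. A section of $\mathscr{E}$ whose zero scheme has no real points does not give $c_2(\mathscr{E})=0$: closed complex points of $U$ have nonzero class in $\CH^2(U)\cong\Z/d$, so the vanishing of $c_2$ must be arranged, not inferred from the absence of real zeros. And producing an actual rank $2$ bundle whose Euler class is a \emph{prescribed} element of $\CHW^2(U)$ requires an argument: the paper builds a symplectic bundle $\mathcal{F}$ on $\mathbb{P}^3_\real$ from the Euler sequence whose first Borel class generates $\CHW^2(\mathbb{P}^3_\real)\cong\Z$, uses additivity of $b_1$ on $\GWr^2(U)$ to reach the class $\overline{d}$, and invokes the fact that on a smooth affine threefold every class in $\GWr^2(U)$ is represented by a genuine rank $2$ symplectic bundle. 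Your remaining steps (topological triviality on $\mathbb{RP}^3$ from $w_1=w_2=0$ via Kahn's comparison $c_i\mapsto w_i$) are fine.
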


Moreover, even strengthening the hypotheses slightly we cannot guarantee splitting.  For instance, given a smooth affine real variety $X$ and an algebraic vector bundle $\mathscr{E}$ on $X$, one might hope that the vanishing of the (algebraic) Euler class of $\mathscr{E}$ and splitting of the associated real topological vector bundle $\mathscr{E}(\real)$ are sufficient to split off a free factor of rank $1$. Our next main result implies even this is not the case.  

\begin{thmintro}[See Theorem~\ref{thm:main2}]
There exists an affine open subscheme $U$ of $\mathbb{P}_\real^3$ such that $U(\real)=\mathbb{P}^3(\real)$ and a vector bundle $\mathscr{E}$ of rank $2$ on $U$ having the following properties:
\begin{itemize}[noitemsep,topsep=1pt]
\item $\det(\mathscr{E})$ is trivial.
\item $e(\mathscr{E})=0$.
\item $\mathscr{E}$ is (algebraically) indecomposable.
\item $\mathscr{E}(\real)\simeq \mathbb{P}^3(\real)\times \real^2$.
\end{itemize}
\end{thmintro}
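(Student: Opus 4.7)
The plan is to build on the motivic obstruction theory already underlying Theorem~\ref{thm:realthreefolds} and push it one Postnikov stage further. Take $U = \PP^3_\real \setminus Q$ with $Q$ a smooth anisotropic quadric (for instance $V(x_0^2 + x_1^2 + x_2^2 + x_3^2)$), so that $U$ is smooth affine of dimension $3$ with $U(\real) = \PP^3(\real)$. Since $\Pic(U)$ is finite (cyclic of order $2$ in this case), any direct-sum decomposition of a rank-$2$ bundle with trivial determinant must take the form $\mathscr{L} \oplus \mathscr{L}^{-1}$ for some $\mathscr{L} \in \Pic(U)$, so indecomposability amounts to ruling out such a splitting for a finite list of candidate $\mathscr{L}$.

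Rank-$2$ bundles on $U$ with trivial determinant are classified by $[U, B\SL_2]_{\aone}$, and the Postnikov tower of $B\SL_2$ organizes the obstructions to splitting off a trivial rank-$1$ summand into successive stages. The primary obstruction is the Euler class in $\CHW^2(U)$, and when it vanishes the next obstruction lives in a group of the form $H^3(U, \piaone_3(B\SL_2))$, governed in this range by Milnor-Witt $K$-theory. I would construct $\mathscr{E}$ by starting from the bundle produced for Theorem~\ref{thm:realthreefolds} and modifying it --- either by a suitable transfer from a finite cover or by a Mayer-Vietoris-type gluing along a divisor disjoint from $U(\real)$ --- so as to cancel both the $\CH^2$-component and the Witt-theoretic component of $e(\mathscr{E})$ while leaving the secondary class non-trivial. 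Non-triviality of that class would be verified through the Gersten resolution for Chow-Witt and related sheaves on $U$, using that the cohomology of $U$ receives contributions supported on $Q$ that are invisible after real realization.

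Algebraic indecomposability would then follow from the observation that any bundle of the form $\mathscr{L} \oplus \mathscr{L}^{-1}$ has trivial secondary invariant. Topological triviality of $\mathscr{E}(\real)$ reduces to a classical obstruction-theoretic computation on the $3$-manifold $\PP^3(\real)$: only $w_1(\mathscr{E}(\real))$, $w_2(\mathscr{E}(\real))$ and the topological Euler class can obstruct triviality of a real rank-$2$ bundle on a $3$-manifold, and these can be controlled directly from the construction since the algebraic invariants from which they are computed have been arranged to vanish (or are torsion of a kind incompatible with the orientability of $\PP^3(\real)$).

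The main obstacle is the last step of the construction: producing an actual vector bundle that realizes a prescribed non-zero class in $H^3(U, \piaone_3(B\SL_2))$, rather than merely exhibiting such a class in the obstruction group. This requires either verifying surjectivity of the geometric-to-cohomological comparison map in this range for the specific $U$ at hand, or giving an explicit clutching/patching description of $\mathscr{E}$ whose primary and secondary invariants can be read off simultaneously while keeping the real realization trivial.
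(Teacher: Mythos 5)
Your plan correctly locates the difficulty, but it stops exactly there: the ``main obstacle'' you name yourself --- producing an actual rank~$2$ bundle realizing a prescribed nonzero secondary class, rather than merely exhibiting a class in the obstruction group --- is the entire content of the theorem, and your proposal offers no mechanism for overcoming it. The paper does not lift a class through the Moore--Postnikov tower at all. It works directly in $\GWr^2(U,\Osc(-4))$: it takes the class $H([\Osc_x])$ of a real point $x\in U(\real)$ under the hyperbolic map $H\colon\Kr_0(U)\to\GWr^2(U,\Osc(-4))$, detects its nontriviality with an explicit $\Z/2$-valued invariant $\overline{\chi}$ (the composite $p_*\circ H$ on $\PP^3_\real$, computed via Serre duality as a parity of coherent cohomology dimensions, and shown in Lemmas~\ref{lem:chi_trivial_on_pushforward} and~\ref{lem:trivialchi} to descend to $\Kr_0(U)$ and to vanish on the image of the forgetful map), and then uses the fact that over an affine threefold every class in $\GWr^2$ is represented by an honest rank~$2$ symplectic bundle. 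The Euler class vanishes because $[\Osc_x]$ lies in coniveau filtration $\geq 3$ while $b_1$ is read off the graded piece $\Fr^2/\Fr^3$. None of this is visible in your sketch; ``a suitable transfer from a finite cover or a Mayer--Vietoris-type gluing'' is not a construction, and you give no way to verify that whatever it produces has nonzero secondary invariant.

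Two further concrete problems. First, the choice $U=\PP^3_\real\setminus Q$ with $Q$ a quadric cannot work: the descent of $\chi$ to $\Kr_0(U)$ requires the removed hypersurface to have degree $2n$ with $n$ \emph{even} (Lemma~\ref{lem:chi_trivial_on_pushforward}), and the remark following the construction notes that for $n$ odd the differential $d_2^{1,0}$ in~\eqref{eqn:GGW2ss} is nontrivial, so the candidate secondary class is killed; a quadric is the case $n=1$. Moreover Proposition~\ref{prop:existencedegreed} requires degree at least $4$, and the complex quadric surface has Picard rank~$2$, so the hypothesis that $\CH^1$ of the removed surface is generated by the hyperplane section also fails. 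Second, the secondary obstruction for a rank~$2$ bundle on a threefold is not ``governed by Milnor--Witt $K$-theory'': the relevant sheaf is $\piaone_2(\A^2\smallsetminus\{0\})$, which surjects onto $\GW_3^2$ with an additional contribution from $\KM_4/12\times_{\overline{\Ibf}^4}\Ibf^4$, and it is precisely the group $\Hr^3(U,\GW_3^2)$, analyzed through the Gersten--Grothendieck--Witt spectral sequence, that the paper's invariant detects. Finally, your assertion that any $\mathscr{L}\oplus\mathscr{L}^{-1}$ has trivial secondary invariant is left unproved; in the paper indecomposability is immediate from nontriviality of the class in $\widetilde{\GWr}^2(U)$.
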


These two theorems show that the relationship between algebraic and topological obstructions is surprisingly intricate.  It could be argued that the case of rank $2$ bundles on threefolds is quite special because of an inherent tension between the topology of real points and the algebro-geoemtric obstructions to splitting.  Indeed, if $X$ is a smooth real affine variety, then rank $2$ vector bundles on $X$ with trivial determinant are classified in motivic homotopy theory by maps to a space $BSL_2$.  In this case, the associated real vector bundle is classified by a map $X(\real) \to BSO(2) = BS^1 = K(\Z,2)$, and thus there is a single cohomological invariant governing splitting, of the associated real vector bundle.  On the other hand, in the algebraic setting, the associated secondary obstruction arises from a homotopy sheaf that lies outside the stable range.  In higher dimensions, the algebraic obstruction simplifies slightly because of stability phenomena, whereas the topological splitting problem becomes more complicated owing to the presence of secondary obstructions as described by Liao.  There is, however, some evidence that the phenomenon we observe persists in higher dimension, and we give some remarks to this effect in the main body of our work.  That being said, at the end of the paper, we also establish some positive results in the following form.  


\begin{thmintro}[See Theorem~\ref{thm:norealpoints}]
Let $X$ be a smooth affine real variety of dimension $d\geq 3$ such that $X(\real)=\emptyset$. If $\Esc$ is a locally free $\Osc_X$-module of rank $d-1$, then $\Esc\simeq \Esc'\oplus \Osc_X$ if and only if $c_{d-1}(\Esc)=0$.
\end{thmintro}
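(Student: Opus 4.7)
The ``only if'' direction is immediate: if $\Esc \simeq \Esc' \oplus \Osc_X$ with $\mathrm{rk}(\Esc') = d-2$, then by the Whitney sum formula $c_{d-1}(\Esc) = c_{d-1}(\Esc')$, and the right-hand side vanishes for rank reasons.

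For the converse, the plan is to deduce the splitting of $\Esc$ from the vanishing of its motivic Euler class $e(\Esc) \in \CHW^{d-1}(X, \det(\Esc)^{\vee})$. By the motivic obstruction theory for splittings recalled in Section \ref{sec:motivicsplitting}, together with the corank $1$ splitting criterion of \cite{Asok12c, Asok23} (which, applied after base change to $\cplx$, handles the residual Chow-theoretic obstruction to splitting once the Euler class is known to vanish), it suffices to prove that the canonical forget map
\[
\CHW^{d-1}(X, \det(\Esc)^{\vee}) \longrightarrow \CH^{d-1}(X), \qquad e(\Esc) \longmapsto c_{d-1}(\Esc),
\]
is injective. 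From the short exact sequence of sheaves $0 \to \Ibf^{d}(L) \to \KMW_{d-1}(L) \to \KM_{d-1} \to 0$ with $L = \det(\Esc)^{\vee}$, the kernel of this forget map is a quotient of the twisted Witt-sheaf cohomology group $H^{d-1}(X, \Ibf^{d}(L))$, so it is enough to show that this group vanishes.

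The key input is the hypothesis $X(\real) = \emptyset$: by the real Nullstellensatz (or Artin--Lang), every residue field of $X$ is non-formally real, so the real spectrum $X_{\mathrm{real}}$ is empty, and Scheiderer's comparison theorem then yields that the real \'etale cohomology of $X$ vanishes in every degree. Comparison results linking the Zariski cohomology of the sheaves $\Ibf^{n}$ to real \'etale cohomology, valid in the range $n \geq j$, translate this vanishing into $H^{j}(X, \Ibf^{n}(L)) = 0$, giving in particular the required vanishing $H^{d-1}(X, \Ibf^{d}(L)) = 0$. The main technical obstacle I foresee is verifying that this real-\'etale/Witt-sheaf comparison behaves uniformly with respect to the line-bundle twist $L = \det(\Esc)^{\vee}$; once that ingredient is in place, the proof assembles cleanly from the framework built earlier in the paper.
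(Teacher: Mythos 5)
Your treatment of the primary obstruction is essentially the paper's: when $X(\real)=\emptyset$ the group $\Hr^{d-1}(X,\Ibf^{d}(\Lc))$ vanishes (in the paper this is read off from Proposition \ref{prop:colliotsch}, whose two outer terms both die when the real locus is empty), so the comparison map $\CHW^{d-1}(X,\det(\Esc)^{\vee})\to\CH^{d-1}(X)$ is injective and $c_{d-1}(\Esc)=0$ forces $e(\Esc)=0$. One small caution there: Jacobson's comparison with real (\'etale) cohomology as stated in the paper requires the twist $t>d$, so in the critical degree $t=d$ the identification is only valid up to a $\bigoplus_T\Z/2$ correction indexed by compact components; this happens to be harmless here, but ``the comparison is valid in the range $n\geq j$'' is not quite the right statement.

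The genuine gap is in your handling of what remains after $e(\Esc)=0$. You assert that the corank $1$ splitting theorem of \cite{Asok12c,Asok23}, ``applied after base change to $\cplx$,'' disposes of the residual obstruction. That theorem is a statement over algebraically closed fields: it tells you that $\Esc_{\cplx}$ splits over $X_{\cplx}$, which does not descend to a splitting of $\Esc$ over $X$. Over $\real$ there is a genuine secondary obstruction beyond the Euler class, living in the cokernel of $\partial\colon\Hr^{d-2}(X,\KMW_{d-1}(\Lc))\to\Hr^{d}(X,\piaone_{d-1}(\A^{d-1}\smallsetminus\{0\})(\Lc))$, and Theorem \ref{thm:main2} of the paper exhibits a rank $2$ bundle on a real threefold with trivial determinant and vanishing Euler class that is nonetheless indecomposable --- so vanishing of $e(\Esc)$ is demonstrably not sufficient in general, and no amount of complexification will see why it suffices here. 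The actual content of the paper's proof is to kill the secondary obstruction using the hypothesis $X(\real)=\emptyset$: one feeds the exact sequence $\KM_{d+1}/24\to\piaone_{d-1}(\A^{d-1}\smallsetminus\{0\})\to\GW^{d-1}_{d}$ (with the modified sheaves for $d=3,4$) into degree-$d$ cohomology and shows both $\Hr^{d}(X,\KM_{d+1}/24)\cong\Hr^{d}(X(\real),\Z/2)=0$ (Corollary \ref{cor:eventotwo}, which itself requires the transfer argument through $X_{\cplx}$ and the divisibility of $\Hr^d(X_\cplx,\KM_{d+1})$) and $\Hr^{d}(X,\GW^{d-1}_{d}(\Lc))=0$ (Corollary \ref{cor:comparison_secondary_obstruction_group_first_part}). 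None of this appears in your proposal, and the theorem does not follow without it.
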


More generally, the same result holds when the manifold $X(\real)$ is has cohomological dimension $\leqslant d-3$, e.g., when $X(\real)$ is contractible (Remark \ref{rem:cohomologicaldimensiondminus3}). 

\begin{thmintro}[See Theorem~\ref{thm:nocompactcomponent}]
Let $X$ be a smooth affine real variety of dimension $d\geq 3$ such that $X(\real)$ has no connected compact component. If $\Esc$ is a locally free $\Osc_X$-module of rank $d-1$, then $\Esc\simeq \Esc'\oplus \Osc_X$ if and only if $e(\Esc)=0$.
\end{thmintro}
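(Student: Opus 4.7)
The ``only if'' direction is immediate: a splitting $\Esc \simeq \Esc' \oplus \Osc_X$ produces a nowhere-vanishing section and so forces $e(\Esc) = 0$. For the ``if'' direction, the plan is to apply motivic obstruction theory to the fiber sequence
\[
\Abb^{d-1}\smallsetminus\{0\} \longrightarrow BGL_{d-2} \longrightarrow BGL_{d-1},
\]
since a splitting of $\Esc$ corresponds to a lift of its classifying map $X \to BGL_{d-1}$ to $BGL_{d-2}$. Because $X$ is smooth affine of dimension $d$ and the fiber is $(d-3)$-connected in motivic homotopy (for $d \geq 3$), only the Postnikov obstructions in cohomological degrees $d-1$ and $d$ can be nontrivial. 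The primary one is $e(\Esc)$, which vanishes by hypothesis, and the remaining secondary obstruction lies in
\[
H^d\bigl(X, \piaone_{d-1}(\Abb^{d-1}\smallsetminus\{0\})(\det \Esc^{\vee})\bigr).
\]

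To analyze this, I would use the real realization functor from motivic to topological homotopy: this sends $\Abb^{d-1}\smallsetminus\{0\}$ to $S^{d-2}$ and compares the motivic secondary obstruction to the classical Liao-type obstruction in $H^d(X(\Rb), \pi_{d-1}(S^{d-2}))$ for splitting a rank $d-1$ real bundle off the $d$-manifold $X(\Rb)$. Our hypothesis that $X(\Rb)$ has no compact connected component implies every component of $X(\Rb)$ is a non-compact smooth $d$-manifold, so $H^d(X(\Rb), -) = 0$ and this topological piece vanishes. The kernel of real realization on the motivic secondary obstruction group is controlled by the ``algebraic'' (Milnor K-theoretic) part of the coefficient sheaf, which by known computations of $\piaone_{d-1}(\Abb^{d-1}\smallsetminus\{0\})$ via the motivic EHP sequence depends only on the top Chern class $c_{d-1}(\Esc) \in \CH^{d-1}(X)$. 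Since vanishing of $e(\Esc)$ implies vanishing of $c_{d-1}(\Esc)$ via the canonical map $\KMW \to \KM$, this kernel is zero---by invoking the argument of Theorem~\ref{thm:norealpoints} (or a moving-lemma reduction to it, replacing $X$ by an affine open $U$ with $U(\Rb) = X(\Rb)$ containing a chosen zero-cycle representative) for this algebraic piece---and hence the secondary obstruction vanishes.

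The main obstacle is step two: the precise identification of the secondary obstruction sheaf and the verification that its decomposition under real realization has the expected form, namely that the topological image records the Liao obstruction while the kernel is detected by the top Chern class. This requires detailed structural knowledge of $\piaone_{d-1}(\Abb^{d-1}\smallsetminus\{0\})$ past the Euler class range, together with the compatibility of real realization with the fundamental exact sequence $0 \to \Ibf^{n+1} \to \KMW_n \to \KM_n \to 0$ of sheaves. Once this identification is in hand, the topological hypothesis on $X(\Rb)$ kills the Witt-theoretic summand and the algebraic arguments behind Theorem~\ref{thm:norealpoints} kill the Milnor K-theoretic summand.
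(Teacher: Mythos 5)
Your setup is the same as the paper's: the ``only if'' direction is trivial, and for the converse one runs Moore--Postnikov obstruction theory for $BGL_{d-2}\to BGL_{d-1}$, notes the primary obstruction is $e(\Esc)=0$, and must kill the secondary obstruction group $\Hr^d(X,\piaone_{d-1}(\A^{d-1}\smallsetminus\{0\})(\Lc))$. The gap is in how you kill that group. The paper decomposes the coefficient sheaf via the exact sequence $\KM_{d+1}/24\to\piaone_{d-1}(\A^{d-1}\smallsetminus\{0\})\to\GW_d^{d-1}$ and shows \emph{both} associated degree-$d$ cohomology groups vanish, each by an identification with a topological group: $\Hr^d(X,\GW_d^{d-1}(\Lc))$ is a quotient of $\Ch^d(X)$, which is isomorphic to $\Hr^d(X(\real),\Z/2)=0$ by Colliot-Th\'el\`ene--Scheiderer, and $\Hr^d(X,\KM_{d+1}/24)\cong\Hr^d(X(\real),\Z/2)=0$ by a divisibility-plus-transfer argument (odd torsion dies by the intermediate value theorem applied to residue fields, $2$-primary torsion is controlled by the degree-$2$ \'etale cover $X_\cplx\to X$). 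Your claim that the Milnor $K$-theoretic piece ``depends only on the top Chern class $c_{d-1}(\Esc)$'' is unsupported and is not how this piece is controlled: $c_{d-1}(\Esc)$ lives in $\CH^{d-1}(X)$, whereas the relevant group is $\Hr^d(X,\KM_{d+1}/24)$, and what one needs (and what is true) is that this entire group vanishes, not that the obstruction class in it is a function of $c_{d-1}$.

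Two further concrete problems. First, your fallback of ``invoking the argument of Theorem~\ref{thm:norealpoints} \ldots replacing $X$ by an affine open $U$ with $U(\Rb)=X(\Rb)$'' cannot work: that theorem requires $X(\real)=\emptyset$, and shrinking $X$ while preserving the real locus never produces an empty real locus, so there is no reduction to the no-real-points case. Second, saying ``$H^d(X(\Rb),-)=0$ so the topological piece vanishes'' does not by itself kill the \emph{motivic} group: you need the comparison isomorphisms ($\Ch^d(X)\cong\Hr^d(X(\real),\Z/2)$ for affine $X$, and the isomorphism $\Hr^d(X,\KM_{d+1}/a)\cong\Hr^d(X(\real),\Z/2)$ for $a$ even), which are genuine theorems, not formal consequences of real realization. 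Finally, for $d=3,4$ the coefficient sheaf is not the one in your display but a fiber product involving $\mathbf{I}^{d+2}$, whose extra contribution is killed by Jacobson's theorem ($\Hr^d(X,\Ibf^{d+2}(\Lc))\cong\Hr^d(X(\real),\Z(L))=0$); your argument does not address these cases.
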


Once again, the result is still true under the weaker condition that the singular cohomology group $\Hr^d(X(\real),\Z[L])$ is trivial for any topological line bundle $L/X(\real)$.

\section{Preliminaries}

\subsection{Motivic homotopy theory}
Let $k$ be a field.  Write $\Sm_k$ for the category of smooth, separated $k$-schemes.  Given $X\in\Sm_k$, we denote by $X_\Nis$ the small Nisnevich site of $X$ whose objects are the étale $X$-schemes (with all $X$-scheme morphisms as morphisms) and whose topology is the Nisnevich topology; we call sheaves on $X_\Nis$ Nisnevich sheaves on $X$. We denote by $X_{\mathrm{Zar}}$ the site whose objects are the open subsets of the topological space $X$ and call \emph{Zariski sheaves on $X$} sheaves on $X_{\mathrm{Zar}}$. Open immersions are étale, yielding a functor $X_{\mathrm{Zar}}\rightarrow X_\Nis$ which underlies a morphism $p:X_{\Nis}\rightarrow X_{\mathrm{Zar}}$ since the Nisnevich topology is finer than the Zariski topology. We denote the pushforward functor underlying $p$ by $p_*$, so given a Nisnevich sheaf $F$ of sets on $X$, $p_*F$ is simply the sheaf $U\mapsto F(U\hookrightarrow X)$ on $X_{\mathrm{Zar}}$.

We denote by $\mathcal{H}(k)$ the Morel--Voevodsky motivic or $\aone$-homotopy category.  We write $\mathcal{H}_{\bullet}(k)$ for the associated pointed version.  This category can be constructed in several ways, but now-a-days it is common to realize it as the homotopy category of the $\infty$-category of motivic spaces.  The category of motivic spaces over $k$ can be identified as the subcategory of the $\infty$-category presheaves on spaces on $\Sm_k$ that are Nisnevich excisive and $\aone$-invariant; this category is also a left Bousfield localization of the $\infty$-category of presheaves of spaces on $\Sm_k$ and we write $\mathrm{L}_{mot}$ for the associated localization functor.  

If $\mathscr{X}$ is a pointed simplicial presheaf on $\Sm_k$, then we write $\pi_i^{\aone}(\mathscr{X})$ for the Nisnevich sheafification of the presheaf $\pi_i(\mathrm{L}_{mot}\mathscr{X})$.  Regarding motivic spheres, we fix the convention that $S^{p,q} := S^p \wedge \gm{\sma q}$.  We also write $[\mathscr{X},\mathscr{Y}]_{\aone}$ for the $\aone$-homotopy classes of maps from $\mathscr{X}$ to $\mathscr{Y}$, and we will also write $\pi_{i,j}^{\aone}(\mathscr{X})$ for the Nisnevich sheafification of the presheaf $U \mapsto [S^{p,q} \wedge U_+,\mathscr{X}]_{\aone}$; when $j = 0$, the resulting sheaves coincide with $\bpi_i^{\aone}(\mathscr{X})$ as defined above.

\subsubsection{Strictly $\mathbb{A}^1$-invariant sheaves.} We assume now that $k$ is perfect. Recall that a Nisnevich sheaf $\mathbf{A}$ on $\Sm_k$ is \emph{strictly $\mathbb{A}^1$-invariant} if the cohomology presheaves $\Hr_\Nis^i(\text{--},\mathbf{A})$ are $\mathbb{A}^1$-invariant, namely the projection $X\times\Abb^1\rightarrow X$ onto the first factor induces an isomorphism $\Hr_\Nis^i(X,\mathbf{A})\cong\Hr_\Nis^i(X\times\mathbb{A}^1,\Abf)$ of abelian groups for every $i$ and every $X\in\Sm_k$.

\begin{ex}[Milnor--Witt $\Kr$-theory]
A very important $\Zb$-graded strictly $\mathbb{A}^1$-invariant sheaf is given by the \emph{unramified Milnor--Witt $\Kr$-theory sheaves} $\KMW_*$ of \cite[Chapter 3]{Morel08}. The graded ring $\KMW_*(F)$ is defined for a field $F$ by explicit generators and relations which in particular show that the Milnor $\Kr$-theory ring $\KM_*(F)$ is a quotient of $\KMW_*(F)$.
\end{ex}

We have the following fundamental theorem due to Morel (see, e.g., \cite[Theorems 2.19 and 6.10]{Bachmann24}) .

\begin{thm}[Morel]\label{theo:pi_i_strictly_invariant}
Let $\mathcal{X}$ be a pointed simplicial presheaf on $\Sm_k$. Then $\pi_i^{\mathbb{A}^1}(\mathcal{X})$ is strictly $\mathbb{A}^1$-invariant for every $i\geqslant 2$.
\end{thm}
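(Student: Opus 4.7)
The plan is to decompose the statement into two ingredients: $(i)$ for every $i \geq 1$, the sheaf $\piaone_i(\mathcal{X})$ is \emph{strongly $\aone$-invariant}, by which I mean $\Hr_{\Nis}^0(-, \piaone_i(\mathcal{X}))$ and $\Hr_{\Nis}^1(-, \piaone_i(\mathcal{X}))$ are both $\aone$-invariant (the latter interpreted as non-abelian cohomology when $i = 1$); and $(ii)$ Morel's structural theorem that any strongly $\aone$-invariant Nisnevich sheaf of abelian groups on $\Sm_k$ is automatically strictly $\aone$-invariant. For $i \geq 2$, $\piaone_i(\mathcal{X})$ is a sheaf of abelian groups, so $(i)$ and $(ii)$ together yield the theorem.

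For $(i)$, I would construct an explicit fibrant model for $\mathrm{L}_{mot}\mathcal{X}$ by transfinitely iterating two operations: Nisnevich-fibrant replacement $\mathrm{Ex}^{\Nis}$, and Morel's singular construction
\[
\mathrm{Sing}^{\aone}(\mathcal{Y})(U) \;=\; \bigl| n \mapsto \mathcal{Y}(U \times \Delta^n_{\aone}) \bigr|.
\]
The crucial lemma, due to Morel--Voevodsky, is that $(\mathrm{Ex}^{\Nis}\mathrm{Sing}^{\aone})^{\infty}\mathcal{X}$ is simultaneously $\aone$-local and Nisnevich-local, hence a model for $\mathrm{L}_{mot}\mathcal{X}$. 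Comparing simplicial homotopy of sections at $U$ and $U \times \aone$ then shows directly that its sheafified homotopy groups are $\aone$-invariant. Promoting $\aone$-invariance to \emph{strong} $\aone$-invariance uses the Postnikov tower of $\mathrm{L}_{mot}\mathcal{X}$ and the long exact sequences for $\Hr^0, \Hr^1$ attached to the fibrations between successive stages, inductively transferring invariance from individual Eilenberg--MacLane-type layers to the whole tower.

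The main obstacle is $(ii)$, which is the deepest part of Morel's work. The strategy is to equip a strongly $\aone$-invariant sheaf $\Abf$ of abelian groups with enough structure (in particular a canonical $\KMW_*$-module structure) to build a Rost--Schmid-style cycle complex
\[
\Cr^\bullet(X,\Abf) : \bigoplus_{x \in X^{(0)}} \Abf(x) \to \bigoplus_{x \in X^{(1)}} \Abf_{-1}(x) \to \bigoplus_{x \in X^{(2)}} \Abf_{-2}(x) \to \cdots
\]
whose terms are sums of values of $\Abf$ and of its iterated \emph{contractions} $\Abf_{-n}$ at residue fields of points, and whose differentials are residue maps. The genuinely hard work is to define the residue maps, verify $d^2 = 0$, and show that this complex computes $\Hr_{\Nis}^*(X, \Abf)$. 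Once this is in hand, $\aone$-invariance of cohomology is immediate: each term is manifestly insensitive to adding a free variable to $X$, since residue fields and their contracted values are unchanged. Verifying the Rost--Schmid identities, in particular the compatibility of residues with transfers along finite separable extensions, is the most delicate point, and it is precisely here that the assumption that $k$ be perfect enters.
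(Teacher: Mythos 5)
The paper does not actually prove this statement: it is quoted as a theorem of Morel with a pointer to the literature, so there is no in-paper argument to compare against. Your proposal correctly reconstructs the architecture of Morel's proof: the decomposition into (i) strong $\aone$-invariance of the homotopy sheaves $\piaone_i(\mathcal{X})$ for $i\geq 1$, and (ii) the implication that a strongly $\aone$-invariant sheaf of abelian groups is strictly $\aone$-invariant via the Rost--Schmid complex, is exactly how the result is established in \cite{Morel08}, and you rightly identify the Rost--Schmid machinery (contractions, residues, the $\KMW_*$-module structure, the role of perfectness of $k$) as the hard core. Two caveats. First, as written this is a roadmap rather than a proof: both (i) and (ii) are themselves theorem-sized inputs occupying most of Morel's book, and you assert them rather than establish them. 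Second, your mechanism for upgrading $\aone$-invariance of $\Hr^0$ to \emph{strong} $\aone$-invariance in step (i) does not work as stated: bootstrapping along the Postnikov tower requires comparing $\Hr^{0}$ and $\Hr^{1}$ of $U$ and of $U\times\aone$ with coefficients in the layers, i.e.\ in the sheaves $\piaone_i$ themselves, which is precisely the strong invariance you are trying to prove, so the induction is circular. Morel's actual route is to prove strong $\aone$-invariance of $\piaone_1$ of an $\aone$-local space directly (via the theory of $\aone$-coverings) and then to deduce the case $i\geq 2$ from $\piaone_i(\mathcal{X})\cong\piaone_1(R\Omega^{i-1}\mathcal{X})$, using that derived loop spaces of $\aone$-local spaces are again $\aone$-local. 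A smaller point: homotopy invariance of the cohomology of the Rost--Schmid complex is not ``manifest'' from the shape of its terms --- the complex for $X\times\aone$ is indexed by the points of $X\times\aone$, not of $X$ --- it is a separate argument in the style of Rost's homotopy invariance for cycle modules.
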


The full subcategory $\Ab^{\mathbb{A}^1}(k)$ of the category of abelian sheaves on $\Sm_k$ spanned by strictly $\Abb^1$-invariant sheaves is abelian and the embedding $\Ab^{\Abb^1}(k)\hookrightarrow\Ab(k)$ is exact (see in particular \cite[Corollary 6.23]{Morel08}). Thus for example, if $\Abf$ is a strictly $\Abb^1$-invariant and $n\in\Zb$, then the kernel ${}_n\Abf$ of the multiplication by $n$ homomorphism $\Abf\xrightarrow{n}\Abf$ is strictly $\Abb^1$-invariant.

\subsubsection{The Rost--Schmid complex of a strictly $\mathbb{A}^1$-invariant sheaf.} An important feature of these sheaves is that every strictly $\mathbb{A}^1$-invariant sheaf $\Abf$ admits a \emph{Rost--Schmid complex} given at a scheme $X\in\Sm_k$ of dimension $d$ by 
\[
\Cr_\RS(X,\Abf):0\rightarrow\bigoplus_{x\in X^{(0)}}\Ar(\kappa(x))\rightarrow\bigoplus_{x\in X^{(1)}}\Ar_{-1}(\kappa(x),\omega_{x/X})\rightarrow\cdots\rightarrow\bigoplus_{x\in X^{(d)}}\Ar_{-d}(\kappa(x),\omega_{x/X})\rightarrow 0
\] 
where the sum over the set $X^{(p)}$ of codimension $p$ points is in degree $p$, $\omega_{x/X}$ denotes the top exterior power of the $\kappa(x)$-dual of $\mathfrak{m}_x/\mathfrak{m}_x^2$ and $\Abf_{-n}$ is the iterated contraction of $\Abf$ (implicitly, we have extended presheaves on $\Sm_k$ to essentially smooth schemes such as extensions of finite transcendence degree of $k$ as explained in \cite{Morel08}). This contraction operation is defined by $\Abf_{-1}(X)=\mathrm{Coker}(\Abf(X)\xrightarrow{p^*}\Abf(X\times\mathbb{G}_m)$ where $p:X\times\mathbb{G}_m\rightarrow X$ denotes the projection; every contraction has a natural $\mathbb{G}_m$-action and we may therefore twist it by a $\mathbb{G}_m$-torsor, namely a line bundle, as we did in the above complex in considering the groups $\Ar_{-p}(\kappa(x),\omega_{x/X})$. The Nisnevich (resp. Zariski) sheafification of the functor $U\mapsto\Cr_\RS(U,\Abf)^p$\footnote{The Rost--Schmid complex is by construction functorial in smooth morphisms of schemes \cite[Paragraph 6.1.2]{Bachmann24}. In particular, since morphisms of étale $X$-schemes are étale hence smooth, as are open immersions, we do obtain a functor on $X_\Nis$ (resp. $X_{\mathrm{Zar}}$).}. defines a flasque Nisnevich (resp. Zariski) sheaf on $X$ and the Rost--Schmid complex determines a resolution of the Nisnevich (resp. Zariski) sheaf $\Abf$ (resp. $p_*\Abf$) on $X$ by these flasque sheaves. In particular, $\Hr_{\mathrm{Nis}}^i(X,\Abf)\cong\Hr_{\mathrm{Zar}}^i(X,p_*\Abf)$ is the $i$-th cohomology group of the complex $\Cr_\RS(X,\Abf)$. Since we mostly manipulate the cohomology of strictly $\mathbb{A}^1$-invariant sheaves, we generally do not indicate the topology used: by the previous argument, it is equivalent to take the Zariski or Nisnevich topology.

\begin{ex}
We have $(\KMW_n)_{-1}=\KMW_{n-1}$ and $(\KM_n)_{-1}=\KM_{n-1}$ for every $n\in\Zb$. Therefore the Rost--Schmid complex for \emph{e.g.} Milnor $\Kr$-theory at a scheme $X\in\Sm_k$ is of the form \[\Cr_\RS(X,\KM_n):0\rightarrow\bigoplus_{x\in X^{(0)}}\KM_n(\kappa(x))\rightarrow\cdots\rightarrow\bigoplus_{x\in X^{(d)}}\KM_{n-d}(\kappa(x))\rightarrow 0\] (twists are irrelevant for Milnor $\Kr$-theory since the $\mathbb{G}_m$-action on it is trivial). It coincides with the Rost complex for Milnor $\Kr$-theory constructed in \cite{Rost96}.
\end{ex}

The contraction functor $\Abf\mapsto\Abf_{-1}$ is exact \cite[Lemma 7.33]{Morel08} as is the functor $\Abf\mapsto\Abf(K)$ taking sections over $K$ for any field $K$. Consequently, the functor $\Abf\mapsto\Cr_\RS(U,\Abf)$ from the category of strictly $\Abf^1$-invariant sheaves to the category of cochain complexes is exact for every $U\in\Sm_k$. 

\subsubsection{The twisted Rost--Schmid complex.} Let $\Abf$ be a strictly $\mathbb{A}^1$-invariant sheaf, let $X\in\Sm_k$ and let $\Lc$ be a locally free $\Osc_X$-module of rank $1$. Then we have an $\Lc$-twisted Rost--Schmid complex \[\Cr_\RS(X,\Lc,\Abf_{-1}):0\rightarrow\bigoplus_{x\in X^{(0)}}\Ar_{-1}(\kappa(x),\Lc)\rightarrow\cdots\rightarrow\bigoplus_{x\in X^{(d)}}\Ar_{-d-1}(\kappa(x),\omega_{x/X}\otimes\Lc)\rightarrow 0\] for the contraction $\Mbf=\Abf_{-1}$. In particular, for every $X$-scheme $Y\rightarrow X$ with $Y\in\Sm_k$, we have a twisted complex $\Cr_\RS(Y,\Lc_{|Y},\Abf_{-1})$. We may consider the Nisnevich (resp. Zariski) sheaf $\Mbf_\Nis(\Lc)$ (resp. $\Mbf_{\mathrm{Zar}}(\Lc)$) on $X$ associated with the presheaf $(U\rightarrow X)\mapsto\Abf_{-1}(U)\otimes_{\Zb[\mathbb{G}_m(U)]}\Zb[\Lc^0(U)]$ where $\Lc^0$ is the $\mathbb{G}_m$-torsor of invertible sections of $\Lc$ on $U$, using the natural $\mathbb{G}_m$-action on the contraction to form the tensor product. Then the Nisnevich (resp. Zariski) sheafification of the functor $U\mapsto\Cr_\RS(U,\Lc_{|U},\Abf_{-1})$ on $X_\Nis$ (resp. $X_{\mathrm{Zar}}$) is a flasque sheaf and the twisted Rost--Schmid complex is a resolution of $\Mbf_\Nis(\Lc)$ (resp. $\Mbf_{\mathrm{Zar}}(\Lc)$) by these flasque sheaves. Consequently $\Hr_\Nis^p(X,\Mbf_\Nis(\Lc))\cong\Hr_{\mathrm{Zar}}^p(X,\Mbf_{\mathrm{Zar}}(\Lc))$ is the $p$-th cohomology group of the complex $\Cr_\RS(X,\Lc,\Abf_{-1})$. Moreover, we note that $\Mbf_{\Zar}(\Lc)=p_*\Mbf_\Nis(\Lc)$ (see \emph{e.g.} \cite[Lemma 2.30]{Hornbostel21}) so there is no risk of confusion in dropping the subscripts $\Nis$ and $\Zar$ which we do from now on.

\begin{ex}
The $n$-th Chow--Witt group of $X$ twisted by $\Lc$ is $\widetilde{\CH}^n(X,\Lc)=\Hr^n(X,\KMW_n(\Lc))$.
\end{ex}

By convention, from now on, a \emph{sheaf on $X$} is a sheaf on the site $X_{\mathrm{Zar}}$ associated with the topological space $X$. As mentioned previously, we essentially only consider cohomology with coefficients in strictly $\mathbb{A}^1$-invariant sheaves for which it follows from the previous arguments that the Zariski cohomology groups in fact agree with the Nisnevich cohomology groups, both in the twisted and untwisted case.

\subsection{Grothendieck--Witt groups}\label{subsection:GW-groups}
Let $X$ be a noetherian separated scheme on which $2$ is invertible\footnote{The modern technology of Poincaré $\infty$-categories allows the development of Grothendieck--Witt theory obviating the need for additional hypotheses on the characteristic of the base field.  Since our eventual goal is to study varieties over the real numbers, and since we routinely quote \cite{Schlichting17}, we leave this hypothesis here for convenience.} and let $\Lc$ be a locally free sheaf of rank $1$ on $X$. We denote by $\Ch^b(X)$ the category of bounded complexes of locally free coherent $\Osc_X$-modules, also called strictly perfect complexes. The complex having $\Lc$ concentrated in degree $0$, which we abusively denote by $\Lc$ as well, is an object of $\Ch^b(X)$ and defines a duality functor $\sharp_\Lc:\Ch^b(X)\rightarrow\Ch^b(X)$ via the internal hom in the category of coherent locally free sheaves, with biduality isomorphism $\mathrm{can}_E^\Lc:E\rightarrow E^{\sharp_\Lc\sharp_\Lc}$ given by $\mathrm{can}_E^\Lc(x)(f)=(-1)^{|x||f|}f(x)$ (where $|y|$ is the degree of the homogeneous element $y$ of the graded module underlying an object of $\Ch^b(\Esc)$ if $y\neq 0$ and $0$ else). Therefore denoting by $\mathrm{quis}$ the collection of quasi-isomorphisms of objects of $\Ch^b(X)$, we obtain a quadruple $(\Ch^b(X),\mathrm{quis},\sharp_\Lc,\mathrm{can}^\Lc)$ which is a \emph{dg-category with weak equivalences and duality} in the sense of \cite{Schlichting17}. To such a structure, Schlichting attaches for each $n$ a spectrum $\GWc^{n}(X,\Lc)$ which is the $n$-th shifted Grothendieck--Witt spectrum of $X$ with coefficients in $\Lc$.

\begin{defn}
The $n$-shifted, degree $i$ Grothendieck--Witt group of $X$ twisted by the line bundle $\Lc$ is the group
\[
\GWr_i^{n}(X,\Lc) := \pi_i(\GWc^{n}(X,\Lc)).
\]
\end{defn}

The sheaf $\Lc$ (resp. the integer $n$) will be omitted from the notation when it is the structure sheaf $\Osc_X$ (resp. the integer $0$) yielding spectra denoted $\GWc(X,\Lc)$ and $\GWc^{n}(X)$. Note also that $\GWc^{n}(X,\Lc)$ is $4$-periodic in $n$ (as an object of the homotopy category of spectra) \cite[Remark 5.9]{Schlichting17} and only depends on the class of $\Lc$ in $\Pic(X)/2$.

We simplify the notation slightly by setting $\GWr^n(X,\Lc)=\GWr_0^n(X,\Lc)$; this group coincides (up to natural isomorphism) with the $n$-shifted Grothendieck--Witt group of $X$ twisted by $\Lc$ in the sense of Walter by \cite[Corollary 3.13, Remark 3.14, Proposition 5.6]{Schlichting17}.

\begin{ex}
The group $\GWr^0(X,\Lc)$ (resp. $\GWr^2(X,\Lc)$) is the Grothendieck--Witt group of symmetric (resp. skew-symmetric) forms on $X$ with coefficients in $\Lc$ by \cite[Theorem 6.1]{Walter03}. Moreover, if $i<0$, then $\GWr_i^{n}(X,\Lc)$ is naturally isomorphic to the shifted Witt group $\Wr^{n-i}(X,\Lc)$ in the sense of \cite{Balmer00} by \cite[Example after Proposition 5.6]{Schlichting17}.
\end{ex}

From now on, we assume that $X$ is a smooth algebraic variety over a field $k$ of characteristic not $2$ (the reader is free to assume that $k$ is an extension of $\Rb$). The category $\Ch^b(X)$ then has a $\Kr$-theory spectrum $\mathcal{K}(X)$ whose homotopy groups are the higher algebraic $\Kr$-theory groups of $X$ (which is homotopy equivalent to the Grothendieck--Witt spectrum of the hyperbolic dg-category $H\Ch^b(X)$ on the dg-category with weak equivalences $(\Ch^b(X),\mathrm{quis})$). In particular, $\pi_0(\mathcal{K}(X))=\Kr_0(X)$. The \emph{algebraic Bott sequence} of \cite[Theorem 6.1]{Schlichting17} is then an exact triangle in the homotopy category of spectra of the form \[\GWc^{n}(X,\Lc)\xrightarrow{F}\mathcal{K}(X)\xrightarrow{H}\GWc^{n+1}(X,\Lc)\xrightarrow{\eta\cup}\mathrm{S}^1\wedge\GWc^{n}(X,\Lc)\] where $\eta$ is an explicit element of $\GWr_{-1}^{-1}(k)$ (corresponding to $\langle 1\rangle$ under the natural isomorphism $\GWr_{-1}^{-1}(k)\simeq \mathrm{W}(k)$). Here $F$ is the forgetful homomorphism, forgetting the duality structure, and $H$ is the hyperbolic homomorphism. Since $X$ is regular, it does not have $\Kr$-theory in negative degrees and the long exact sequence of homotopy groups associated with the algebraic Bott sequence yields an exact sequence \[\GWr^{n}(X,\Lc)\xrightarrow{F}\Kr_0(X)\xrightarrow{H}\GWr^{n+1}(X,\Lc)\rightarrow\Wr^{n+1}(X,\Lc)\rightarrow\Kr_{-1}(X)=0\] of abelian groups which we refer to as Karoubi periodicity. Using this exact sequence, one can understand some Grothendieck--Witt groups inductively as follows:

\begin{ex}\label{ex:shift_gw_field}
We compute some shifted Grothendieck--Witt groups of the base field $k$. First note that if $b$ is a nondegenerate skew-symmetric form on a $k$-vector space $V$, then $b$ is a sum of copies of the hyperbolic form $H^-(k)$ on $k$ by \cite[Proposition 1.9]{Elman08}. We conclude that the hyperbolic homomorphism $H:\Kr_0(k)\rightarrow\GWr^2(k)$ is surjective so $\Wr^2(k)=0$. In fact, $H$ is injective since the rank of the hyperbolic form $H^-(V)\in\GWr^2(k)$ on the $k$-vector space $V$ has rank $2\mathrm{rk} V$ so $H^-(V)$ determines the image of the class $[V]$ of $V$ in $\Kr_0(k)$ under the rank isomorphism $\mathrm{rk}:\Kr_0(k)\rightarrow\Zb$. It follows that $\GWr^2(k)=\Zb$. Moreover, $\Wr^{2n+1}(k)=0$ by \cite[Proposition 5.2]{Balmer02}. Now every finite-dimensional vector space $V$ supports a nondegenerate symmetric bilinear form that one may construct by choosing an isomorphism $V\simeq k^{\oplus m}$ of $k$-vector spaces and using diagonal forms on $k^{\oplus m}$. It follows that the forgetful homomorphism $F:\GWr(k)\rightarrow\Kr_0(k)$ is surjective. The Karoubi periodicity exact sequence 
\[
\GWr(k)\xrightarrow{F}\Kr_0(k)\rightarrow\GWr^1(k)\rightarrow\Wr^1(k)=0
\] 
then shows that $\GWr^1(k)=0$. Since every nondegenerate skew-symmetric form has even rank as a sum of copies of the hyperbolic form $H^-(k)$, which has rank $2$ by definition, the forgetful homomorphism $F:\GW^2(k)\rightarrow\Kr_0(k)$ has image $2\Zb$. The Karoubi periodicity exact sequence \[\GWr^2(k)\xrightarrow{F}\Kr_0(k)\xrightarrow{H}\GWr^3(k)\rightarrow\Wr^3(k)=0\] then shows that $\GWr^3(k)=\Zb/2$.
\end{ex}

If $f:Y\rightarrow X$ is a morphism of schemes and $\Lc$ is a line bundle on $X$, there is an induced morphism $f^*:(\Ch^b(X),\mathrm{quis},\sharp_\Lc,\mathrm{can}^\Lc)\rightarrow(\Ch^b(Y),\mathrm{quis},\sharp_{f^*\Lc},\mathrm{can}^{f^*\Lc})$ of dg-categories with weak equivalences and duality, yielding a morphism $f^*:\GWc^j(X,\Lc)\rightarrow\GWc^j(Y,f^*\Lc)$ of spectra that can be arranged to be a functor of $f$ \cite[Remark 9.4]{Schlichting17}. Therefore we get an induced homomorphism \[f^*:\GWr_i^j(X,\Lc)\rightarrow\GWr_i^j(Y,f^*\Lc)\] of abelian groups for every $(i,j)$. In particular, we have a presheaf $U\mapsto\GWr_i^j(U)$ of abelian groups on $\Sm_k$ for every pair $(i,j)$ of integers. We denote the associated Nisnevich sheaf on $\Sm_k$ by $\GW_i^j$. The sheaf $\GW_i^j$ is also the $i$-th homotopy sheaf $\pi_i^{\mathbb{A}^1}(\underline{\mathcal{GW}}^j)$ of the motivic space $\underline{\mathcal{GW}}^j$ representing $j$-shifted Grothendieck--Witt groups constructed by Hornbostel in \cite{Hornbostel05}, see also \cite[\S 3.3]{Asok12c}. Consequently, $\GW_i^j$ is strictly $\mathbb{A}^1$-invariant by Morel's theorem \ref{theo:pi_i_strictly_invariant}.

Moreover, if $f$ is a proper morphism of constant relative dimension $\delta=\dim(Y)-\dim(X)$ where $X$ and $Y$ are smooth over $k$, we have a pushforward homomorphism \[f_*:\GWr^{i}(Y,\omega_{Y/k}\otimes f^*\Lc)\rightarrow\GWr^{i+\delta}(X,\omega_{X/k}\otimes\Lc)\] for each line bundle $\Lc$ on $X$, where $\omega_{Z/k}$ is the determinant of the sheaf of Kähler differentials of the smooth $k$-scheme $Z$. For example, if $X$ is irreducible of dimension $d$, every rational point $x:\Spec k\rightarrow X$ of $X$ induces a homomorphism \[x_*:\GWr^{i}(k)\rightarrow\GWr^{i-d}(X,\omega_{X/k})\] of abelian groups for every $i$.

\begin{ex}\label{exe:pushfoward_GW_even_degree}
Let $k'/k$ be an even degree extension. The finite map $p:\Spec k'\rightarrow\Spec k$ induces a pushforward homomorphism $p_*:\GWr^3(k')\rightarrow\GWr^3(k)$, which we claim to be trivial. Indeed consider the commutative diagram
\begin{center}
\begin{tikzcd}
\Kr_0(k') \arrow[d,"p_*"] \arrow[r,"H"] & \GWr^3(k')=\Zb/2 \ar[r] \arrow[d,"p_*"] & 0 \\
\Kr_0(k) \arrow[r,"H"] & \GWr^3(k)=\Zb/2 \arrow[r]                                & 0
\end{tikzcd}
\end{center}
with exact rows. To show that $p_*:\GWr^3(k')\rightarrow\GWr^3(k)$ is the zero map, it suffices to observe that $p_*:\Kr_0(k')\rightarrow\Kr_0(k)$ is the multiplication by $[k':k]$.
\end{ex}

One of the most important tools to study Grothendieck--Witt groups on a smooth $k$-scheme $X$ is the \emph{Gersten--Grothendieck--Witt spectral sequence}, see for instance \cite{Fasel09c}. Given a line bundle $\Lc$ on $X$, it is a cohomological spectral sequence of the form \[\Er(n)_1^{p,q}=\bigoplus_{x\in X^{(p)}}\GWr_{n-p-q}^{n-p}(\kappa(x),\omega_{x/X}\otimes\Lc)\Rightarrow\GWr_{n-m}^n(X,\Lc)\] where the filtration on the abutment is given by the codimension of the support as usual in coniveau-type spectral sequence (the groups outside the range $0\leqslant p\leqslant\dim(X)$ vanish). The line $q=0$ is the complex 
\[
\Er(n)_1^{p,0}:0\rightarrow\bigoplus_{x\in X^{(0)}}\GWr_n^n(\kappa(x),\omega_{x/X}\otimes\Lc)\rightarrow\cdots\rightarrow\bigoplus_{x\in X^{(d)}}\GWr_{n-d}^{n-d}(\kappa(x),\omega_{x/X}\otimes\Lc)\rightarrow 0
\] 
where $d=\dim X$. As explained in \cite[Subsections 3.3 and 3.4]{Fasel09c}, this complex is quasi-isomorphic the Rost--Schmid complex for $\KMW_n(\Lc)$ in degree $p\geqslant n-2$.
In particular, $\Er(n)_2^{n,0}=\widetilde{\CH}^n(X,\Lc)$ \cite[Theorem 33]{Fasel09c}. More generally, the line $q$ is the Rost--Schmid complex for the strictly $\mathbb{A}^1$-invariant sheaf $\GW_{n-q}^n$.

\begin{ex}\label{exe:GWWss_dim_3}
Let us analyse the spectral sequence $\Er(2)$ over a threefold $X\in\Sm_k$, and more specifically the line $p+q=2$ which will be relevant below. Let us begin by noticing that 
\[
\Er(2)_1^{0,2}=\bigoplus_{x\in X^{(0)}}\GWr_0^2(\kappa(x),\omega_{x/X}\otimes\Lc)=\bigoplus_{x\in X^{(0)}}\Z
\]
by Example \ref{ex:shift_gw_field}. The contractions of $\Z$ being trivial, we see that $\Er(2)_1^{i,2}=0$ for any $i\geq 1$. Moreover, the edge homomorphism 
\[
\GWr^2(X,\Lc)\to \Er(2)_{\infty}^{0,2}
\]
coincides with the localization homomorphism $\GWr^2(X,\Lc)\to \GWr^2(k(X),\Lc\otimes k(X))=\Z$, which is nothing else as the rank homomorphism, whose kernel we denote by $\widetilde{\GWr}^2(X,\Lc)$.
Next, we claim that
\[
\Er(2)_1^{0,1}=\bigoplus_{x\in X^{(0)}}\GWr_1^2(\kappa(x),\omega_{x/X}\otimes\Lc)
\] 
is the zero group. Indeed, this follows from the fact that $\GWr_1^2(F)$ vanishes for any field $F$ of characteristic not $2$ by \cite[Lemma 2.2]{Fasel10b}. It follows that the subquotients $\Er(2)_2^{0,1}$ and $\Er(2)_3^{0,1}$ of $\Er(2)_1^{0,1}$ vanish, while the groups $\Er(2)_1^{i,1}=0$ (being contractions of a trivial sheaf). Therefore the abutment group $\Er(2)_\infty^{1,1}$ vanishes.

We have $\Er(2)_2^{2,0}=\widetilde{\CH}^2(X,\Lc)$. Since the terms of the spectral sequence vanish outside the range $0\leqslant p\leqslant\dim(X)=3$, the differentials out of $\Er(2)_r^{2,0}$ vanish for every $r\geqslant 2$, and the only differential into $\Er(2)_r^{2,0}$ that may not vanish is the differential $d_2^{0,1}:\Er(2)_2^{0,1}\rightarrow\Er(2)_2^{2,0}$. However, $\Er(2)_2^{0,1}$ vanishes as observed previously. Therefore $\Er(2)_2^{2,0}=\widetilde{\CH}^2(X,\Lc)$ coincides with the group $\Er(2)_\infty^{2,0}$ at the $\Er_\infty$-page.

The group $\Er(2)_2^{3,-1}$ is given by $\Er(2)_2^{3,-1}=\Hr^3(X,\GW_3^2(\Lc))$ by our observation regarding the lines $q=q_0<0$ above. There is a differential \[d_2^{1,0}:\Er(2)_2^{1,0}\rightarrow\Er(2)_2^{3,-1}\] on the $\Er(2)_2$-page. Our partial identification of the Gersten--Grothendieck--Witt complex with the Rost--Schmid complex of $\KMW_2(\Lc)$ shows that $\Er(2)_2^{1,0}=\Hr^1(X,\KMW_2(\Lc))$. Since the differential $d_2^{3,-1}:\Er(2)_2^{3,-1}\rightarrow\Er(2)_2^{5,-2}$ vanishes, we obtain that $\Er(2)_3^{3,-1}$ is a quotient group of the form \[\Er(2)_3^{3,-1}=\Hr^3(X,\GW_3^2(\Lc))/d_2^{1,0}\Hr^1(X,\KMW_2(\Lc)).\] The differential $d_3^{0,1}:\Er(2)_3^{0,1}\rightarrow\Er(2)_3^{3,-1}$ vanishes because the source $\Er(2)_3^{0,1}$ vanishes as we observed previously, and the target of the differential $d_3^{3,-1}$ is again the zero group because the terms on the $\Er(2)_1$-page vanish outside the range $0\leqslant p\leqslant 3$. Finally, the differentials to and from $\Er(2)_r^{3,-1}$ vanish for $r\geqslant 4$ for the same reason. In conclusion, $\Er(2)_\infty^{3,-1}=\Hr^3(X,\GW_3^2(\Lc))/d_2^{1,0}\Hr^1(X,\KMW_2(\Lc))$.

Codimension of support yields a filtration $\Fr^\bullet\GWr^2(X,\Lc)$ on the abutment group $\GWr^2(X,\Lc)$ on the line $p+q=2$ such that the graded piece 
\[
\Gr^p\GWr^2(X,\Lc)=\Fr^p\GWr^2(X,\Lc)/\Fr^{p+1}\GWr^2(X,\Lc)
\] 
is equal to $\Er(2)_\infty^{p,2-p}$. The previous arguments then show that 
\[
\Gr^p\GWr^2(X,\Lc)=0\;\text{if}\;p<0\;\text{or}\;p>d,\;\Gr^0=\mathrm{H}^0(X,\Z),\;\Gr^1\GWr^2(X,\Lc)=\Er(2)_\infty^{1,1}=0,
\]
\[
\Gr^2\GWr^2(X,\Lc)=\widetilde{\CH}^2(X,\Lc),\;\Gr^3\GWr^2(X,\Lc)=\Hr^3(X,\GW_3^2(\Lc))/d_2^{1,0}\Hr^1(X,\KMW_2(\Lc))=\Fr^3\GWr^2(X,\Lc).
\] 
In particular, this yields an exact sequence 
\begin{equation}\label{eqn:GGW2ss}
\Hr^1(X,\KMW_2(\Lc))\xrightarrow{d_2^{1,0}}\Hr^3(X,\GW_3^2(\Lc))\rightarrow\widetilde{\GWr}^2(X,\Lc)\rightarrow\widetilde{\CH}^2(X,\Lc)\rightarrow 0.
\end{equation}
\end{ex}

\subsection{Vector bundles}\label{sec:motivicsplitting}
In analogy with the situation in topology, there is a well-behaved obstruction theory for splitting vector bundles in motivic homotopy theory. We briefly survey the discussion in \cite[\S 6]{Asok12c} for the convenience of the reader.  If $X \Sm_k$, then write $\mathscr{V}_r(X)$ for the set of isomorphism classes of rank $r$ algebraic vector bundles on $X$.  In that case, paralleling the Pontryagin--Steenrod classification of vector bundles in topology, there is a corresponding affine representability statement: if $X$ is furthermore affine, then there is a canonical pointed bijection:
\[
\mathcal{V}_n(X)\simeq [X_+,\mathrm{BGL}_n]_{\mathbb{A}^1,\bullet}
\]
where $\mathrm{BGL}_r$ is the classifying space of general linear group in motivic spaces (and up to motivic equivalence, the map $Gr_r \to \mathrm{BGL}_r$ classifying the tautological vector bundle is a motivic equivalence \cite[\S 4 Proposition 3.7]{Morel99}); see \cite{Morel08,Schlichting15} and \cite[Theorem 1]{Asok15a}. The determinant map $\mathrm{GL}_n\to \gm{}$ yields a morphism of classifying spaces $\det\colon\mathrm{BGL}_n\to \mathrm{B}\gm{}$ such that the induced map
\[
\mathcal{V}_n(X)\simeq [X_+,\mathrm{BGL}_n]_{\mathbb{A}^1,\bullet}\to [X_+,\mathrm{B}\gm{}]_{\mathbb{A}^1,\bullet}\simeq \mathrm{Pic}(X)
\]
is the determinant map. If $\Lc\in \mathrm{Pic}(X)$, we denote by $\mathcal{V}_n(X,\Lc)$ the fiber of $L$ under this morphism. Equivalently, an element in $\mathcal{V}_n(X,\Lc)$ is an isomorphism class of pairs $(\mathscr{E},\varphi)$ where $\mathscr{E}$ is a rank $n$ vector bundle on $X$ and $\varphi\colon\det(\mathscr{E})\to \Lc$ is a specified isomorphism, and two pairs $(\mathscr{E},\varphi)$ and $(\mathscr{E}',\varphi')$ are isomorphic if there exists an isomorphism $\psi\colon \mathscr{E}\to \mathscr{E}'$ such that $\varphi=\varphi'\circ\det(\psi)$.

If $n\geq 2$, the map $\mathrm{GL}_{n-1}\to \mathrm{GL}_{n}$ defined by $M\mapsto \mathrm{diag}(1,M)$ induces a morphism $s_{n-1}\colon \mathrm{BGL}_{n-1}\to \mathrm{BGL}_{n}$ of pointed spaces. If $\mathscr{E}$ is a rank $n$ vector bundle on $X$, represented by a pointed morphism $\xi\colon X_+ \to \mathrm{BGL}_{n}$, there is a set of inductively defined obstructions to lifting $\xi$ to a morphism $\xi'\colon  X_+ \to \mathrm{BGL}_{n-1}$ (having the property that $\xi=s_n\circ \xi'$ up to homotopy).  The obstructions arise iva pullbacks of $k$-invariants in the Moore-Postnikov tower of the morphism $s_n$.  These obstructions take values in cohomology of $X$ with coefficients in $\aone$-homotopy sheaves of the (homotopy) fiber of $s_n$, twisted by a suitable orientation local system.   

Connectivity estimates on the homotopy fiber yield a well-defined \emph{primary} obstruction to lift $\xi$ as above, which lives in  the cohomology group $\mathrm{H}^n_{\Nis}(X,\piaone_{n-1}(F)(\det \mathscr{E}))$; here $F$ is the homotopy fiber of $s_n$, which is $\aone$-equivalent to $\A^n\smallsetminus\{0\}$, and the strictly $\A^1$-invariant sheaf $\piaone_{n-1}(F)$ is twisted by the line bundle $\det V$, which plays the role of an orientation character. This primary obstruction is the so-called \emph{Euler class}
\[
e(V)\in \mathrm{H}^n_{\Nis}(X,\piaone_{n-1}(\A^n\smallsetminus\{0\})(\det \mathscr{E}))=\mathrm{H}^n_{\Nis}(X,\KMW_n(\det \mathscr{E}))=\CHW^n(X,\det \mathscr{E}).
\]
If $n=d:=\mathrm{dim}(X)\geq 2$, then the Euler class is the only obstruction to lifting $\xi$, and we obtain for any line bundle $\Lc$ on $X$ an exact sequence of pointed sets
\begin{equation}\label{eqn:corank0}
\mathcal{V}_{d-1}(X,\Lc)\xrightarrow{s_{d-1}}\mathcal{V}_{d}(X,\Lc)\xrightarrow{e}\CHW^n(X,\Lc)
\end{equation}
with $e$ defined by $e(\mathscr{E},\varphi)=\varphi_*e(\mathscr{E})$, where $\varphi_*\colon \CHW^n(X,\det \mathscr{E})\to \CHW^n(X,\Lc)$ is the isomorphism induced by $\varphi$.

If $n=\mathrm{dim}(X)-1\geq 2$, the vanishing of the primary obstruction yields a well-defined \emph{secondary} obstruction $o_2(V)$ that lives in the cokernel of a morphism
\[
\partial\colon \mathrm{H}^{d-2}_{\Nis}(X,\piaone_{d-2}(\A^{d-1}\smallsetminus\{0\})(\det \mathscr{E}))\longrightarrow \mathrm{H}^{d}_{\Nis}(X,\piaone_{d-1}(\A^{d-1}\smallsetminus\{0\})(\det \mathscr{E}))
\]
provided by the Moore-Postnikov factorization.  If this secondary obstruction vanishes, then there exists a further lift of $\xi$ to a higher stage of the tower.  More preciseyl, the Moore-Postnikov tower provides us with a space $E\in \mathcal{H}_\bullet(k)$ over $\mathrm{B}\gm{}$ sitting in a sequence
\[
\mathrm{BGL}_{d-2}\xrightarrow{i}E\xrightarrow{p}\mathrm{BGL}_{d-1}
\]
yielding for any line bundle $\Lc$ on $X$ a diagram of pointed sets and groups
\begin{equation}
\xymatrix{&\mathcal{V}_{d-2}(X,\Lc)\ar[d]^-{i_*}\ar[rd]^-{(s_{d-2})_*} & &  \\
\mathrm{H}^{d-2}_{\Nis}(X,\KMW_{d-1}(\Lc))\ar[r]\ar[rd]_-\partial & [X_+,E^{\Lc}]_{\mathbb{A}^1,\bullet}\ar[r]_-{p_*}\ar[d]^-{o_2} & \mathcal{V}_{d-1}(X,\Lc)\ar[r]^-e & \CHW^{d-1}(X,\Lc)  \\
 & \mathrm{H}^{d}_{\Nis}(X,\piaone_{d-1}(\A^{d-1}\smallsetminus\{0\})(\Lc)) &  & }
\end{equation}
in which $E^{\Lc}$ fits into a homotopy Cartesian square
\[
\xymatrix{E^{\Lc}\ar[r]\ar[d] & E\ar[d] \\
X\ar[r]_-{\det\circ\xi} & \mathrm{B}\gm{},}
\]
the pointed set $[X_+,E^{\Lc}]_{\mathbb{A}^1,\bullet}$ is computed as homotopy classes over $X_+$, the vertical line is exact in the usual sense, and the horizontal line is exact in the sense that two elements of $[X_+,E^{\Lc}]_{\mathbb{A}^1,\bullet}$ have the same image under $p_*$ if and only if they differ by an element of  the group $\mathrm{H}^{d-2}_{\Nis}(X,\KMW_{d-1}(\Lc))$ which acts naturally on this set. We note that we will use the above diagram only in the case where $\Lc$ is trivial, in which case one can remove the decoration $\Lc$ and just consider the pointed set $[X_+,E]_{\mathbb{A}^1,\bullet}$ (computed over the point).

In order to use this diagram, it is necessary to understand the homotopy sheaf $\piaone_{d-1}(\A^{d-1}\smallsetminus\{0\})$, together with the induced action of $\gm{}=\piaone_1(\mathrm{BGL}_n)$ that determines the associated twisted sheaf. In this article, we will mainly consider vector bundles with a given trivialization of their determinants, in which case this action is irrelevant. As regards the relevant homotopy sheaf, it can be described in case $n\geq 4$ by an exact sequence of sheaves of the form (\cite[\S 4.3]{Asok22}, the seequence is further exact on the left if $k$ has characteristic $0$ \cite[Theorem 7.2.2.3]{Asok23})
\begin{equation}\label{eqn:firstnontrivialhomotopysheaf}
\KM_{n+2}/24 \longrightarrow \piaone_n(\A^n\smallsetminus\{0\}) \longrightarrow \mathbf{GW}_{n+1}^n.
\end{equation}
The cases $n=2,3$ were treated earlier in \cite{Asok12a} and \cite{Asok12c} respectively; the descriptions of these sheaves also involve the degree map $\piaone_n(\A^n\smallsetminus\{0\})\to \mathbf{GW}_{n+1}^n$, which can be seen as an unstable analogue of the unit map $\epsilon\colon \mathbf{1}\to \mathbf{KO}$ from the sphere spectrum to the $\pone$-spectrum representing Hermitian $K$-theory (\cite{Asok14b}). 

\subsection{Real realization and its avatars}\label{sec:realrealization}
Suppose that the base field $k$ admits a real embedding $k\hookrightarrow \real$, that we fix in the sequel. Left Kan extension of the functor that assigns to a smooth $k$-variety $X$ the real manifold $X(\real)$ (endowed with the usual Euclidean topology) extends to a real realization functor
\[
{\mathfrak R}_{{\mathbb R}}: \mathrm{H}_\bullet(k) \longrightarrow \mathrm{H}_\bullet
\]
where $\mathrm{H}_\bullet$ is the ``classical'' homotopy category of spaces.  As a left Kan extension, this functor preserves (homotopy) colimits, as well as finite products.  In particular, the real realization of the motivic sphere $S^{p,q}$ is the ordinary sphere $S^p$, and there are induced homomorphisms: 
\[
\bpi_{i,j}^{\aone}(X,x)(\real) \longrightarrow \pi_{i}(X(\real),x).
\]
Many of the fundamental problems we need to address can be phrased as problems of commuting (homotopy) limits and colimits.  Indeed, we will routinely use fiber sequences when studying obstruction theory, and there is no formal reason for fiber sequences to be preserved by real realization.  This leads to what might be interpreted as unexpected behavior of some spaces, e.g., real realization of Voevodsky's motivic Eilenberg--Mac Lane spaces are analyzed in \cite[\S 7.3]{ABEH}.

Nevertheless, when fiber sequences may be realized geometrically, such compatibilities are straightforward to check.  For example, either using the fact that the simplicial classifying space $BGL_n$ can be described as a geometric realization, for $n \geq 2$, the motivic or $\aone$-fiber sequence 
\[
\A^n\smallsetminus\{0\} \longrightarrow \mathrm{BGL}_{n-1} \longrightarrow \mathrm{BGL}_{n}
\]
realizes to the fiber sequence
\[
S^{n-1}\simeq \real^n\smallsetminus\{0\} \longrightarrow \mathrm{BO}_{n-1}(\real) \longrightarrow \mathrm{BO}_{n}(\real).
\]

Our general goal is to compare the Moore-Postnikov towers associated to these fiber sequences, and we'll need concrete realization morphisms in order to perform this comparison. We will introduce the real cycle class map in the next section, but we begin by recalling a well-known classical result.  



\begin{lem}\label{lem:top_splitting_over_threefolds}
Let $E$ be a rank $2$ topological real vector bundle on a three-dimensional manifold $M$ such that the Euler class of $E$ vanishes. Then $E$ splits as $E\simeq\det E\oplus\varepsilon^1$ where $\varepsilon^1$ is the trivial rank $1$ bundle on $M$ and $\det E$ is the determinant bundle of $E$. In particular, if $\det E$ is trivial, then $E$ is trivial.
\end{lem}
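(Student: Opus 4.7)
The statement is classical, so the plan is to run the obstruction theoretic argument for the existence of a nowhere vanishing section of $E$ and deduce the splitting from it. The point is that a nowhere vanishing section of $E$ yields a short exact sequence $0\to\varepsilon^1\to E\to Q\to 0$ of topological vector bundles on the paracompact space $M$; such sequences split (choose a Riemannian metric and take orthogonal complements), and a determinant computation forces $Q\simeq\det E$, giving the desired splitting $E\simeq\det E\oplus\varepsilon^1$.

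To produce a nowhere vanishing section of $E$ over $M$, I would work cell-by-cell on a CW-structure (the key facts for smooth manifolds being a homotopy equivalence to a CW-complex and the standard principle that a section extends over a cell precisely when its image in the relevant relative homotopy set of the fiber vanishes). The fiber of the unit sphere bundle $S(E)\to M$ is $S^1$, so the primary obstruction to a nowhere vanishing section lives in the cohomology group $\Hr^2(M,\pi_1(S^1)\otimes\Z[L])=\Hr^2(M,\Z[\det E])$, twisted by the orientation character of $E$; by construction this class is precisely the (twisted) Euler class of $E$, which vanishes by hypothesis. Hence one can choose a nowhere vanishing section of $E$ over the $2$-skeleton $M^{(2)}$ of $M$.

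The only obstruction to extending this section across the $3$-cells lies in $\Hr^3(M,\pi_2(S^1)\otimes\Z[\det E])$. Since $\pi_2(S^1)=0$, this group is zero, and the section extends over the whole of $M$. This is the crux of the argument and relies on the low-dimensional coincidence that the fiber $S^1$ is aspherical above dimension $1$; in higher-dimensional analogues a genuine secondary obstruction (typically $\Z/2$-valued, in the spirit of Liao) would appear, which is exactly why the statement is restricted to threefolds.

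Given such a nowhere vanishing section, the splitting $E\simeq\varepsilon^1\oplus Q$ discussed above identifies $Q\simeq\det(\varepsilon^1)^\vee\otimes\det E=\det E$. If moreover $\det E$ is trivial, then $E\simeq\varepsilon^1\oplus\varepsilon^1=\varepsilon^2$, completing the lemma.
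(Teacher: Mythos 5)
Your argument is correct and is the standard obstruction-theoretic proof of this classical fact; the paper itself states the lemma without proof, simply citing it as well known. The key points — that the primary obstruction to a section of the circle bundle $S(E)\to M$ is the twisted Euler class in $\Hr^2(M,\Z[\det E])$, that all higher obstructions vanish because $\pi_k(S^1)=0$ for $k\geq 2$ (the low-dimensional coincidence you correctly identify as the reason the statement is special to threefolds), and that the resulting splitting $E\simeq\varepsilon^1\oplus Q$ forces $Q\simeq\det E$ by taking determinants — are all in order. The only point worth tightening is the CW-structure step: rather than merely invoking a homotopy equivalence to some CW complex, note that a smooth $3$-manifold admits a triangulation and hence a CW structure of dimension $3$, which is what makes the vanishing of the group $\Hr^3(M,\pi_2(S^1)\otimes\Z[\det E])$ (indeed of the obstruction cochain itself) conclude the extension over all of $M$.
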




\subsubsection{Real cycle class maps}\label{subsection:real_cycle_class_maps}
Let $X$ be a smooth real algebraic variety of dimension $d$. Denote by $\iota:X(\real)\hookrightarrow X$ the inclusion of the real locus of $X$: here $X(\real)$ is again endowed with its Euclidean topology, for which $\iota$ is continuous. The map $\iota$ induces a pushforward functor $\iota_*$ on categories of abelian sheaves. Recall the following result (e.g. \cite[Lemma 1.2]{vanHamel00}).

\begin{prop}\label{prop:cohomology_locally_constant_sheaves}
If $\mathscr{F}$ is a locally constant sheaf on $X(\Rb)$, then $\mathrm{R}^p\iota_*\Fsc=0$ for every $p>0$ (where $\mathrm{R}^p\iota_*$ is the $p$-th right derived functor of $\iota_*$). Thus $\Hr^n(X,\iota_*\Fsc)=\Hr^n(X(\Rb),\Fsc)$.
\end{prop}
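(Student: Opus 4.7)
The plan is to reduce the second assertion to the first via the Leray spectral sequence for $\iota:X(\Rb)\hookrightarrow X$: once the vanishing $\mathrm{R}^p\iota_*\Fsc=0$ for $p>0$ is known, the Leray $\Er_2$-page is concentrated on the row $q=0$, giving the identification $\Hr^n(X,\iota_*\Fsc)\cong \Hr^n(X(\Rb),\Fsc)$. So the content of the proposition is entirely carried by the vanishing of the higher direct images.

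To prove $\mathrm{R}^p\iota_*\Fsc=0$ for $p\geq 1$, I would show that the Zariski stalks of this sheaf vanish at every point $x \in X$. Since $\mathrm{R}^p\iota_*\Fsc$ is the Zariski sheafification of the presheaf $U\mapsto \Hr^p(U(\Rb),\Fsc|_{U(\Rb)})$, the stalk at $x$ is
\[
(\mathrm{R}^p\iota_*\Fsc)_x \;=\;\varinjlim_{U\ni x}\Hr^p(U(\Rb),\Fsc|_{U(\Rb)}),
\]
so the task reduces to showing that every cohomology class in this colimit becomes trivial after restriction to a sufficiently small Zariski neighborhood of $x$.

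I would split into two cases. If $x$ admits a Zariski open neighborhood $U$ with $U(\Rb)=\emptyset$, the claim is trivial at $x$. Otherwise, the strategy is to exploit the semi-algebraic and Nash structure of smooth real algebraic varieties to produce, at any real point $p\in X(\Rb)$, a cofinal system of Zariski open neighborhoods $U$ of $p$ in $X$ whose real loci $U(\Rb)$ are contractible (or at least Euclidean-retract onto a point). Since Zariski-closed subsets of $X(\Rb)$ are semi-algebraic, one can combine semi-algebraic triangulations with Nash tubular neighborhoods to deformation-retract $U(\Rb)$ onto a small Euclidean ball centered at $p$. On a contractible space every locally constant sheaf has trivial higher cohomology, so the colimit vanishes for $p>0$.

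The main obstacle is precisely this topological-geometric claim: that Zariski opens furnish a cofinal family of Euclidean neighborhoods of $p$ with trivial topology. The difficulty is in passing from the algebraic data of Zariski opens on $X$ to the Euclidean-topological structure of $X(\Rb)$; this step relies essentially on the smoothness of $X$ and on semi-algebraic geometry, rather than on any motivic or purely sheaf-theoretic input, and it is where all of the real substance of the proof is concentrated.
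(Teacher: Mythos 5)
The paper does not actually prove this proposition; it recalls it from the literature (van Hamel, and ultimately the Delfs--Knebusch/Scheiderer comparison between semialgebraic and real-spectrum cohomology), so the comparison here is with that standard argument. Your reduction of the second assertion to the first via the Leray spectral sequence is correct, as is the identification of the stalk $(\mathrm{R}^p\iota_*\Fsc)_x$ with $\varinjlim_{U\ni x}\Hr^p(U(\real),\Fsc)$. The problem is the geometric claim carrying all the weight: there is \emph{no} cofinal system of Zariski open neighborhoods $U$ of a real point $p$ with $U(\real)$ contractible, and no deformation retraction of $U(\real)$ onto a small Euclidean ball around $p$. A Zariski open $U\ni p$ is the complement of a proper closed subvariety $Z$, so $U(\real)=X(\real)\setminus Z(\real)$ is a \emph{dense} open subset of $X(\real)$, not a small neighborhood of $p$. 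Already for $X=\A^1_\real$ and $p=1$, the open $U=\A^1\setminus\{0,2\}$ has disconnected real locus, and every smaller Zariski open still omits $0$ and $2$ while remaining dense in $\real$, so no member of any cofinal system can be contractible. For $X=\A^2_\real$ and $U=\A^2\setminus\{x^2+y^2=1\}$ one even has $\Hr^1(U(\real),\Z)\neq 0$, so the individual groups in the colimit genuinely do not vanish.

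What is true, and what the actual proof establishes, is only that the \emph{colimit} vanishes: every class in $\Hr^p(U(\real),\Fsc)$, $p>0$, dies after deleting finitely many further hypersurfaces. This is proved not by contractibility but by identifying $\varinjlim_U\Hr^p(U(\real),\Fsc)$ (via the comparison of semialgebraic cohomology with sheaf cohomology of the real spectrum, plus continuity of cohomology on spectral spaces) with $\Hr^p$ of the real spectrum of the local ring $\Osc_{X,x}$, whose fibre over $x$ is the boolean space $\Sper\kappa(x)$ of cohomological dimension $0$. Your proposal supplies no substitute for this step; ``semialgebraic triangulations plus Nash tubular neighborhoods'' cannot produce the claimed retractions because the sets in question are dense and usually disconnected. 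A secondary gap: your case analysis treats $x$ as a real closed point, but the stalk must also be controlled at non-closed points (e.g.\ the generic point, where the colimit runs over all nonempty opens and there is no center $p$ to retract onto) and at closed points with residue field $\cplx$ all of whose neighborhoods still meet $X(\real)$.
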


We now define the real cycle class maps that we will use. We start with the most classical one. Denote by $\mathscr{H}^n$ the sheaf on $X$ associated with the presheaf $U\mapsto\Hr_\et^n(U,\Z/2)$. Then there is a natural morphism $\overline{\mathrm{sign}}:\mathscr{H}^n\rightarrow\iota_*\Z/2$ of abelian sheaves on $X$ constructed in \cite[Subsection 2.1]{Colliot90} where it is called the signature mod $2$, yielding for any $(n,t)$ a morphism 
\[
\overline{\gamma_\real}:\Hr^n(X,\mathscr{H}^t)\rightarrow\Hr^n(X,\iota_*\Z/2)
\] 
and the last group may naturally be identified with the sheaf cohomology group $\Hr^n(X(\real),\Z/2)$ (which coincides with the corresponding singular cohomology group) by Proposition \ref{prop:cohomology_locally_constant_sheaves}. Symbols in étale cohomology may be used to define a morphism $\KM_t\rightarrow\mathscr{H}^t$ of sheaves on $X$ for any $t$ which factors through $\KM_t/2$ since the target sheaf is $2$-torsion by definition. Consequently, we have morphisms $\overline{\mathrm{sign}}:\KM_t\rightarrow\iota_*\Z/2$ and $\overline{\mathrm{sign}}:\KM_t/2\rightarrow\iota_*\Z/2$ of abelian sheaves on $X$ and thus morphisms 
\[
\overline{\gamma_\real}:\Hr^n(X,\KM_t)\rightarrow\Hr^n(X(\real),\Z/2),\;\overline{\gamma_\real}:\Hr^n(X,\KM_t/2)\rightarrow\Hr^n(X(\real),\Z/2)
\] 
of abelian groups for any $(n,t)$. We call these \emph{mod $2$ cycle class maps}.

There is no known way to lift the first of these maps to \emph{integral} singular cohomology. To map an algebraic cohomology theory into integral singular cohomology, following \cite{Jacobson16}, we use the theory of symmetric bilinear forms. Namely let $\Ibf^n$ denote the sheafification of the presheaf $U\mapsto\mathrm{I}^n(U)$ on $X$, where $\mathrm{I}^n(U)$ is the $n$-th power of the fundamental ideal $\mathrm{I}(U)$ of the Witt ring $\mathrm{W}(U)$ of $U$. Then the signature of symmetric bilinear forms over $\real$ allows one to define a morphism $\mathrm{sign}_t:\Ibf^t\rightarrow\iota_*\Z$ of sheaves which we take to be the composition of the morphism of \cite[Page 381]{Jacobson16} with multiplication by $\frac{1}{2^t}$. Multiplication by $2$ induces a morphism $f:\Ibf^t\rightarrow\Ibf^{t+1}$ such that $\mathrm{sign}_{t+1}\circ f=\mathrm{sign}_t$. As explained in \emph{e.g.} \cite[Subsection 3.2]{Hornbostel21}, this morphism can be further twisted by the datum of a line bundle $\Lc$ on $X$, yielding a morphism $\mathrm{sign}(\Lc):\Ibf^t(\Lc)\rightarrow\iota_*\Z(L)$ of sheaves where $L=\Lc(\real)$ and $\Z(L)$ is the associated local system \cite[Subsection 2.5.1]{Hornbostel21}. This provides a map 
\[
\gamma_t^n:\Hr^n(X,\Ibf^t(\Lc))\rightarrow\Hr^n(X(\real),\Z(L))
\] 
such that for every $t\leqslant t'$, the morphism $\Ibf^t\rightarrow\Ibf^{t'}$ induced by multiplication by $2^{t'-t}$ fits into a commutative diagram
\[
\xymatrix{\Hr^n(X,\Ibf^t(\Lc))\ar[r]\ar[rd]_-{\gamma_t^n} & \Hr^n(X,\Ibf^{t'}(\Lc))\ar[d]^-{\gamma_{t'}^n} \\
& \Hr^n(X(\real),\Z(L))}
\]
for every $n$. The map $\gamma_t^n(\Lc)$ is by definition Jacobson's \emph{real cycle class map}. The following theorem, whose non-formal content is due to Jacobson, is crucial.

\begin{thm}[Jacobson]\label{theo:jacobson}
Let $t>d$. Then $\gamma_t^n(\Lc):\Hr^n(X,\Ibf^t(\Lc))\rightarrow\Hr^n(X(\real),\Z(L))$ is an isomorphism for every $n$.
\end{thm}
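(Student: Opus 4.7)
My plan is to prove the theorem in two stages: first, recall the strategy for the untwisted case $\Lc = \Osc_X$ (which is Jacobson's original result), then extend to arbitrary $\Lc$ by a descent argument.

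For the untwisted case, the approach is to exhibit, via the signature map, a quasi-isomorphism between the Rost--Schmid complex computing $\Hr^*(X, \Ibf^t)$ and a complex computing $\Hr^*(X(\real), \Z)$. The $p$-th term of the Rost--Schmid complex is $\bigoplus_{x \in X^{(p)}} \Ibf^{t-p}(\kappa(x), \omega_{x/X})$. Each residue field $\kappa(x)$ is finitely generated over $\real$ of transcendence degree at most $d - p$, hence has virtual $2$-cohomological dimension at most $d - p$. The hypothesis $t > d$ gives $t - p > d - p$ on every column, placing us in the stable range of the fundamental ideal filtration. In this range, a stabilization theorem (originally due to Arason--Elman--Jacob, refined by Jacobson) asserts that the signature induces an isomorphism $\Ibf^{t-p}(\kappa(x)) \simeq \Hr^0(\Sper(\kappa(x)), \Z)$. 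To turn this termwise identification into a quasi-isomorphism of complexes, one checks that the Rost--Schmid residues correspond, under the signature, to the topological boundary maps in a real Gersten-type resolution of the sheaf $\iota_*\Z$ on $X$; this target complex then computes $\Hr^*(X(\real), \Z)$ by Proposition \ref{prop:cohomology_locally_constant_sheaves}.

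For the general case, I descend from the untwisted one. Choose a Zariski cover $\{U_i\}$ of $X$ trivializing $\Lc$; on each $U_i$ the sheaves $\Ibf^t(\Lc)|_{U_i}$ and $(\iota_*\Z(L))|_{U_i}$ identify with their untwisted counterparts, and the gluing on double overlaps is governed by the sign class of the transition cocycle in $\Gm/\Gm^2 \simeq \Z/2$. The signature being natural in the line bundle, it assembles into a morphism of \v{C}ech double complexes that is, by the untwisted theorem, a termwise quasi-isomorphism on each entry $U_{i_0 \cdots i_p}$. A compatible \v{C}ech-to-derived-functor spectral sequence comparison on both sides then shows that $\gamma_t^n(\Lc)$ is an isomorphism in every degree $n$.

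The main obstacle is the untwisted case itself, specifically the stable-range identification of $\Ibf^{t-p}(\kappa(x))$ with $\Hr^0(\Sper(\kappa(x)), \Z)$ and --- more delicate --- the matching of the algebraic Rost--Schmid residues with the topological boundary maps in the real Gersten-type complex. These constitute the non-formal content of Jacobson's theorem. Given that input, the passage to arbitrary twists is a formal descent calculation.
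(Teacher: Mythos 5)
The paper does not actually prove this statement: it is quoted as an external result, with the non-formal content attributed to Jacobson \cite{Jacobson16} and the twisted refinement taken from \cite{Hornbostel21}, so there is no internal proof to compare yours against. Your outline does match the proof architecture of those references: a termwise identification, via the normalized signature, of the Rost--Schmid complex of $\Ibf^t$ with a real Gersten-type resolution of $\iota_*\Z$, using that in column $p$ the exponent $t-p$ exceeds the virtual $2$-cohomological dimension $d-p$ of the residue fields, followed by the compatibility of algebraic residues with topological boundary maps. But be clear that what you have written is a reading guide rather than a proof: the two steps you isolate at the end (the stable-range isomorphism $\Ibf^{t-p}(\kappa(x))\cong\Hr^0(\Sper\kappa(x),\Z)$ and the residue compatibility), together with a third you do not mention (that the real Gersten-type complex really is a flasque resolution of $\iota_*\Z$, which rests on Scheiderer-type results on the cohomology of the real spectrum), constitute essentially the entire content of the theorem and are taken as given. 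That is acceptable here only because the paper itself treats the theorem as a black box.

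On the twisted case: your \v{C}ech double-complex descent can be made to work (you need the untwisted statement on every finite intersection $U_{i_0\cdots i_p}$, which holds since these are smooth of dimension $\leq d$ and the hypothesis $t>d$ is inherited), but it is heavier than necessary. The signature $\mathrm{sign}(\Lc):\Ibf^t(\Lc)\rightarrow\iota_*\Z(L)$ is already a morphism of sheaves on $X$, and both sides admit compatible flasque (twisted Rost--Schmid, resp. real Gersten) resolutions; whether the induced map is a quasi-isomorphism is therefore a Zariski-local question, and on a trivializing open set it reduces immediately to the untwisted theorem. This sheaf-level reduction is the standard route in \cite{Hornbostel21} and avoids any discussion of convergence of the \v{C}ech-to-derived-functor spectral sequence.
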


Multiplication by $2$ induces an exact sequence 
\[
0\rightarrow\Z(L)\xrightarrow{\cdot 2}\Z(L)\rightarrow\Z/2\rightarrow 0
\]
of sheaves on $X(\real)$, which is preserved by $\iota_*$ by Proposition \ref{prop:cohomology_locally_constant_sheaves}. We then have a commutative diagram
\begin{equation}\label{diag:morphism_exact_sequence_real}
\xymatrix{0\ar[r] & \Ibf^{n+1}(\Lc) \ar[r] \ar[d]_-{\mathrm{sign}_{n+1}} & \Ibf^n(\Lc) \ar[r] \ar[d]_-{\mathrm{sign}_{n}} & \overline{\Ibf}^n \ar[r] \ar[d]_-{\overline{\mathrm{sign}}_{n}} & 0 \\
0 \ar[r] & \iota_*\Z(L) \ar[r]_-{\cdot 2} & \iota_*\Z(L) \ar[r] & \iota_*\Z/2 \ar[r] & 0}
\end{equation}
with exact rows by definition of the signature, where $\overline{\Ibf}^n$ is the quotient $\Ibf^n(\Lc)/\Ibf^{n+1}(\Lc)$ (this quotient sheaf does not depend on $\Lc$). Pfister forms can be used to define an epimorphism $\KM_n\rightarrow\overline{\Ibf}^n$ of sheaves which factors through $\KM_n/2$. The composite morphism $\KM_n\rightarrow\overline{\Ibf}^n\rightarrow\iota_*\Z/2$ coincides with the morphism $\overline{\mathrm{sign}}$ defined above \cite[Proof of Proposition 3.12]{Hornbostel21}.

Recall now that the Milnor--Witt $\mathrm{K}$-theory sheaves $\KMW_n$ are defined by Morel in \cite{Morel08}. These can also be twisted by the line bundle $\Lc$. We have a natural morphism $\KMW_n(\Lc)\rightarrow\Ibf^n(\Lc)$  (derived from \cite[Lemma 1.2]{Fasel20b}) inducing a signature morphism $\widetilde{\mathrm{sign}}(\Lc):\KMW_n(\Lc)\rightarrow\Ibf^n(\Lc)\rightarrow\iota_*\Z(\mathcal{L})$ and thus a real cycle class map 
\[
\widetilde{\gamma_\real}(\Lc):\Hr^n(X,\KMW_t(\Lc))\rightarrow\Hr^n(X(\real),\Z(L)).
\] 
For $n=t$, we obtain a morphism $\widetilde{\gamma_\real}(\Lc)\colon {\CHW}^n(X,\Lc)\rightarrow\Hr^n(X(\real),\Z(L))$. Note that the short exact sequence of sheaves
\[
0\to 2\KM_n\to \KMW_n(\Lc)\to \Ibf^n(\Lc)\to 0
\]
and the fact that $\Hr^{n+1}(X,2\KM_n)=0$ show that the morphism $\KMW_n(\Lc)\rightarrow\Ibf^n(\Lc)$ induces an epimorphism $\CHW^n(X,\Lc)\to \Hr^n(X,\Ibf^n(\Lc))$. In particular, the image of $\widetilde{\gamma_\real}(\Lc)$ is the same as the image of $\gamma_n^n(\Lc)$.

\subsubsection{Characteristic classes}

Having our real cycle class map at hand, we will also need to understand the real realization of some characteristic classes, in particular the Borel classes introduced in \cite{Panin21} (to which we refer for more information).
Let $X$ be a smooth scheme over $\real$, and let $\mathscr{E}$ be a symplectic bundle on $X$, i.e. a vector bundle endowed with a nondegenerate symplectic form 
\[
\varphi\colon \mathscr{E}\to \mathscr{E}^\vee:=\mathcal{H}om_{\Osc_X}(\mathscr{E}, \Osc_X)
\]
To such a bundle, we can associate Borel classes $b_i(\mathscr{E},\varphi)\in \CHW^{2i}(X)$, which satisfy the usual formulas, including the Whitney sum formula. This yields maps 
\[
b_i\colon \mathrm{GW}^2(X) \longrightarrow \CHW^{2i}(X),
\]
such that $b_1$ is a group homomorphism. The relation with the Chern classes of $\mathscr{E}$ are given by the following commutative diagram
\[
\xymatrix{ \mathrm{GW}^2(X)\ar[r]^-{b_i}\ar[d]_-f &  \CHW^{2i}(X)\ar[d] \\
\mathrm{K}_0(X)\ar[r]_{c_{2i}} & \CH^{2i}(X),}
\]
where $f$ is the forgetful homomorphism, and the right-hand vertical arrow is the one induced by the morphism of sheaves $\KMW_*\to \KM_*$. The Borel classes are also compatible with characteristic classes in topology, in a sense that we now explain.  If $\mathscr{E}$ is of rank $2n$, the real realization $\mathscr{E}(\real)$ is a complex vector bundle of rank $n$ on $X(\real)$ \cite{Arnold85}, and we can consider its associated Chern classes $c_i^{\mathrm{top}}\in \Hr^{2i}(X(\real),\Z)$ and Stiefel-Whitney classes $w_{2i}\in \Hr^{2i}(X(\real),\Z/2)$. 

\begin{prop}\label{prop:commutcharacteristic}
If $X$ is a smooth real scheme, the following diagram commutes:
\[
\xymatrix{ \mathrm{GW}^2(X)\ar[rr]^-{b_i}\ar[dd]_-f\ar[rd] & &  \CHW^{2i}(X)\ar@{->}'[d][dd]\ar[rd]^-{\widetilde{\gamma_\real}} &  \\
& \mathrm{KU}^0(X(\real))\ar[rr]^(0.35){c_i^{\mathrm{top}}}\ar[dd] & & \Hr^{2i}(X(\real),\Z)\ar[dd] \\
\mathrm{K}_0(X)\ar@{->}'[r]^-{c_{2i}}[rr]\ar[rd] & & \CH^{2i}(X)\ar[rd]^-{\overline{\gamma_\real}} &   \\
 & \mathrm{KO}^0(X(\real))\ar[rr]_-{w_{2i}} & & \Hr^{2i}(X(\real),\Z/2)}
\]
\end{prop}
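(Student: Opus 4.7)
The cube in the statement has six faces, and it suffices to verify that each commutes. The back face is the commutative algebraic square displayed immediately above, relating $b_i$ and $c_{2i}$ via $f$ and the canonical projection $\CHW^{2i}(X)\to \CH^{2i}(X)$. The front face is the classical topological identity that the mod-$2$ reduction of the $i$-th Chern class of a complex vector bundle equals the $2i$-th Stiefel--Whitney class of the underlying real bundle. The right face follows from diagram \eqref{diag:morphism_exact_sequence_real}: both $\widetilde{\gamma_\real}$ and $\overline{\gamma_\real}$ are constructed by passing through the signature morphisms $\KMW_n\to \Ibf^n\to \iota_*\Z$ and $\KM_n\to \overline{\Ibf}^n\to \iota_*\Z/2$, and these fit into a commutative square with the reduction $\KMW_n\to \KM_n$ on the one side and the coefficient sequence $0\to \Z(L)\to \Z(L)\to \Z/2\to 0$ on the other. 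The left face asserts that real realization commutes with the forgetful map from symplectic bundles to ordinary bundles; by Arnold's theorem \cite{Arnold85}, a symplectic bundle realizes to a complex bundle whose underlying real bundle coincides with the realization of the vector bundle underlying the symplectic bundle, so both $\mathrm{K}$-theoretic forgetful maps amount to discarding the enriched structure.

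The top and bottom faces carry the main content. The bottom face, $\overline{\gamma_\real}\circ c_{2i}=w_{2i}\circ(\text{realization})$, is the familiar compatibility between the mod-$2$ real cycle class map on Chow groups and Stiefel--Whitney classes. It is proved by the standard splitting principle: pulling back along the full flag bundle of $\mathscr{E}$ reduces to the case of a line bundle $\Lc$, where $\overline{\gamma_\real}(c_1(\Lc))=w_1(\Lc(\real))$ holds essentially by the construction of $\overline{\gamma_\real}$ through the signature morphism. For the top face, I would use a symplectic splitting principle: there is a smooth proper morphism $\pi\colon Y\to X$ (an iterated symplectic Grassmannian bundle) over which $\pi^*\mathscr{E}$ splits symplectically as $\bigoplus_{j=1}^n (\Lc_j\oplus \Lc_j^\vee)$, and such that both $\pi^*\colon \CHW^*(X)\to \CHW^*(Y)$ and its realization $\pi_\real^*\colon \Hr^*(X(\real),\Z)\to \Hr^*(Y(\real),\Z)$ are injective. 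The Whitney sum formula for Borel classes from \cite{Panin21}, together with the multiplicativity of complex Chern classes, then reduces the general case to that of a rank-$2$ symplectic bundle.

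The resulting rank-$2$ base case is the principal technical obstacle. Here I would argue via the universal example: the tautological rank-$2$ symplectic bundle on $\mathrm{B}\mathbf{Sp}_2\simeq \mathrm{B}\mathbf{SL}_2$ has its Borel class $b_1$ generating $\CHW^2(\mathrm{B}\mathbf{SL}_2)$ and lifting $c_2$ in $\CH^2$, while after choosing a compatible complex structure its realization identifies $\mathrm{B}\mathbf{Sp}_2(\real)\simeq \mathrm{BU}(1)=\mathrm{K}(\Z,2)$, whose integral cohomology is a polynomial ring on the topological first Chern class $c_1^{\mathrm{top}}$ of the tautological complex line bundle. The identity $\widetilde{\gamma_\real}(b_1)=c_1^{\mathrm{top}}$ for this universal class then reduces to tracking the normalization of the signature morphism $\widetilde{\mathrm{sign}}\colon \KMW_2\to \iota_*\Z$ on the universal Euler class, after which functoriality of all the morphisms in the diagram propagates the identity to arbitrary $(X,\mathscr{E},\varphi)$.
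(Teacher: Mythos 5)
Your decomposition of the cube into six faces matches the paper's, and your treatment of the four "easy" faces (back, front, left, right) agrees with the paper, which disposes of them by construction/definition. For the top face, however, you take a genuinely different route. The paper's proof is a one-step comparison of the two projective bundle theorems: the Borel classes are \emph{defined} by the quaternionic projective bundle theorem on $\mathrm{H}\mathbb{P}(\mathscr{E})$, the topological Chern classes by the complex projective bundle theorem on $\mathbb{C}\mathbb{P}(\mathscr{E}(\real))$, and the whole proposition follows once one observes that the real realization of $\mathrm{H}\mathbb{P}^n$ (and more generally of $\mathrm{H}\mathbb{P}(\mathscr{E})$) is $\mathbb{C}\mathbb{P}^n$ (resp.\ $\mathbb{C}\mathbb{P}(\mathscr{E}(\real))$), essentially by unwinding the moduli descriptions. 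Your route instead runs the symplectic splitting principle plus the Whitney formula to reduce to rank $2$, then argues universally on $\mathrm{B}\mathbf{SL}_2\simeq\mathrm{H}\mathbb{P}^\infty$. This is viable, but note two points. First, the symplectic splitting principle produces an orthogonal sum of rank-$2$ symplectic subbundles, not a sum of hyperbolic pieces $\Lc_j\oplus\Lc_j^\vee$ as you wrote; since you only use the reduction to rank $2$, this is harmless but should be corrected. Second, and more substantively, the "normalization" you defer in the base case --- that the universal $b_1$ realizes to the tautological $c_1^{\mathrm{top}}$ on $\mathrm{BU}(1)$ --- is not a formal bookkeeping matter about $\widetilde{\mathrm{sign}}$ alone: to identify the realization of the finite approximations of $\mathrm{B}\mathbf{SL}_2$ with $\mathbb{C}\mathbb{P}^n$ together with its tautological class, you need precisely the geometric identification of $\mathfrak{R}_\real(\mathrm{H}\mathbb{P}^n)$ with $\mathbb{C}\mathbb{P}^n$ that constitutes the paper's entire argument. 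So your proof is not wrong, but it packages the same key input inside a longer reduction; what it buys is a more familiar-looking argument (splitting principle plus universal example), at the cost of invoking the symplectic splitting principle, the Whitney sum formula for Borel classes, and multiplicativity of the real cycle class maps, none of which the paper's direct comparison requires.
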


\begin{proof}
The front and back squares commute by construction of the respective characteristic classes. The left and right squares commute by definition of the respective realization maps, and we are left to prove the the top and bottom squares commute. We do it for the top one, the arguments being the same for the other. The main point of the proof is that if $\mathrm{H}\mathbb{P}^n$ is the quaternionic Grassmannian considered by Panin and Walter, its realization is $\cplx\mathbb{P}^n$. This is true almost by definition, the former classifying rank $2$ subbundles of $\Osc^{2n+2}$ on which the restriction of the symplectic form $h_{2n+2}$ is non degenerate. This corresponds to a rank $1$ complex subbundle of $\cplx^{n+1}$. This fact generalizes to the quaternionic Grassmannian $\mathrm{H}\mathbb{P}(\mathscr{E})$ and we conclude easily. 
\end{proof}

\section{Corank $0$ vector bundles}\label{section:corank_zero}
Let $X$ be a smooth real algebraic variety of dimension $d$. Our goal in this section is to study the exact sequence of pointed sets \eqref{eqn:corank0}
\[
\mathcal{V}_{d-1}(X,\Lc)\xrightarrow{s_{d-1}}\mathcal{V}_{d}(X,\Lc)\xrightarrow{e}\CHW^n(X,\Lc)
\]
for any line bundle $\Lc$ over $X$. As discussed in Section \ref{sec:realrealization}, real realization induces a comparison map 
\[
r_n\colon \mathcal{V}_{n}(X,\Lc)\to \mathcal{V}^{\mathrm{top}}_n(X(\real),L)
\]
where the right-hand side denotes the pointed set of rank $d$ topological vector bundles on $X(\real)$ having determinant isomorphic to $\Lc(\real)=L$. 

\subsection{Statement}
Obstruction theory on the motivic side and on the topological side yield exact sequences of pointed sets that we can combine into a commutative diagram (use \cite[Proposition 6.1]{Hornbostel21} for the right-hand square)
\[
\xymatrix{\mathcal{V}_{d-1}(X,\Lc)\ar[r]^-{s_{d-1}}\ar[d]_-{r_{d-1}} & \mathcal{V}_{d}(X,\Lc)\ar[r]^-{e}\ar[d]_-{r_{d}}& \CHW^d(X,\Lc)\ar@{-->}[d]_-{\widetilde{\gamma_\real}(\Lc)} \\
\mathcal{V}^{\mathrm{top}}_{d-1}(X(\real),L)\ar[r]_-{s_{d-1}} & \mathcal{V}^{\mathrm{top}}_d(X(\real),L)\ar[r]_-{e(\real)} & \mathrm{H}^d(X(\real),\Z(L)).}
\]

\begin{thm}\label{thm:corank_zero}
Let $X$ be a smooth affine variety of dimension $d\geq 1$ over $\real$, and let $\mathscr{E}$ be a corank $0$ vector bundle on $X$. Then $\mathscr{E}$ splits off a trivial line bundle if, and only if, the top Chern class $c_d(\mathscr{E})$ vanishes and the topological vector bundle $\mathscr{E}(\real)$ splits off a trivial line bundle.
\end{thm}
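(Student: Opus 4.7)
The plan is to exploit the comparison diagram displayed immediately before the theorem, which combines the motivic Euler class obstruction sequence (\ref{eqn:corank0}) with the analogous topological sequence via real realization. The forward direction is immediate: a splitting $\mathscr{E} \simeq \mathscr{E}' \oplus \Osc_X$ forces $c_d(\mathscr{E}) = 0$ by the Whitney sum formula, and real realization preserves this splitting.

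For the converse, suppose $c_d(\mathscr{E}) = 0$ and $\mathscr{E}(\real)$ splits topologically. Exactness of the bottom row of the comparison diagram gives $e(\real)(\mathscr{E}(\real)) = 0$, and commutativity of the right square then yields $\widetilde{\gamma_\real}(e(\mathscr{E})) = 0$ in $H^d(X(\real), \Z(L))$. Combined with the vanishing $\pi(e(\mathscr{E})) = c_d(\mathscr{E}) = 0$ in $\CH^d(X)$, where $\pi \colon \CHW^d(X, \Lc) \twoheadrightarrow \CH^d(X)$ is the natural surjection induced by $\KMW_d \to \KM_d$, exactness of the top row (\ref{eqn:corank0}) reduces the theorem to showing $e(\mathscr{E}) = 0$. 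To deduce this, I would use the short exact sequence $0 \to \Ibf^{d+1}(\Lc) \to \KMW_d(\Lc) \to \KM_d \to 0$; its cohomology long exact sequence, combined with the dimension vanishing $H^{d+1}(X, \Ibf^{d+1}(\Lc)) = 0$, gives
\[
H^d(X, \Ibf^{d+1}(\Lc)) \xrightarrow{\phi} \CHW^d(X, \Lc) \xrightarrow{\pi} \CH^d(X) \to 0,
\]
lifting $e(\mathscr{E})$ to some $\alpha \in H^d(X, \Ibf^{d+1}(\Lc))$. By Jacobson's Theorem \ref{theo:jacobson} applied with $t = d+1 > d$, the real cycle class map $\gamma_{d+1}^d$ is an isomorphism $H^d(X, \Ibf^{d+1}(\Lc)) \xrightarrow{\sim} H^d(X(\real), \Z(L))$, and the sheaf-level diagram (\ref{diag:morphism_exact_sequence_real}) links $\gamma_{d+1}^d(\alpha)$ to $\widetilde{\gamma_\real}(e(\mathscr{E})) = 0$, ultimately forcing $\alpha = 0$ and hence $e(\mathscr{E}) = 0$.

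The principal obstacles are two compatibilities. First, that $\widetilde{\gamma_\real}$ identifies the Chow--Witt Euler class $e(\mathscr{E})$ with the topological Euler class $e(\real)(\mathscr{E}(\real))$, i.e.\ commutativity of the right-hand square of the comparison diagram: this should follow from a Moore--Postnikov analysis using that real realization sends the motivic fiber sequence $\A^d \smallsetminus \{0\} \to \mathrm{BGL}_{d-1} \to \mathrm{BGL}_d$ to the classical $S^{d-1} \to \mathrm{BO}(d-1) \to \mathrm{BO}(d)$, in the spirit of Proposition \ref{prop:commutcharacteristic}. Second, and more subtly, the factor-$2$ normalization of Jacobson's signatures $\mathrm{sign}_t = 2^{-t}\mathrm{sign}_J$: diagram (\ref{diag:morphism_exact_sequence_real}) shows $\mathrm{sign}_d \circ i = 2\,\mathrm{sign}_{d+1}$ for the natural inclusion $i \colon \Ibf^{d+1} \hookrightarrow \Ibf^d$, so a direct comparison only yields $\widetilde{\gamma_\real} \circ \phi = 2\,\gamma_{d+1}^d$ and hence $\alpha \in H^d(X, \Ibf^{d+1}(\Lc))[2]$ rather than $\alpha = 0$. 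Closing this gap requires upgrading the argument, either by refining the lift $\alpha$ to something $2$-divisible (leveraging $c_d(\mathscr{E}) = 0$ together with the mod-$2$ cycle class map $\overline{\gamma_\real} \colon \CH^d(X) \to H^d(X(\real), \Z/2)$), or by directly analyzing the kernel of $(\pi, \widetilde{\gamma_\real})$ via the Cartesian pullback description $\KMW_d(\Lc) \simeq \KM_d \times_{\KM_d/2} \Ibf^d(\Lc)$ and its associated Mayer--Vietoris sequence.
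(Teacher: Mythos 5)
Your overall strategy is the same as the paper's: reduce both directions to the injectivity of the map $\CHW^d(X,\Lc)\to\CH^d(X)\times\Hr^d(X(\real),\Z(L))$ on the class $e(\mathscr{E})$, using the exact sequence $\Hr^d(X,\Ibf^{d+1}(\Lc))\to\CHW^d(X,\Lc)\to\CH^d(X)\to 0$ and Jacobson's isomorphism. But the gap you flag at the end is genuine and is precisely the crux of the theorem: the factor of $2$ in $\widetilde{\mathrm{sign}}\circ\phi=2\cdot\mathrm{sign}_{d+1}$ only gives that your lift $\alpha$ is $2$-torsion, and $\Hr^d(X(\real),\Z(L))$ really does have $2$-torsion in general (it contains a $\Z/2$ for every compact component on which $L$ is not the orientation bundle), so no divisibility trick on $\alpha$ alone can finish the argument. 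Your two proposed fixes do not identify the missing ingredient: in particular the mod-$2$ cycle class map you invoke lives in degree $d$ on $\CH^d(X)$, which is not where the obstruction sits.

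The input the paper uses to close exactly this gap is the surjectivity of the mod-$2$ cycle class map \emph{one degree down}, $\overline{\gamma_\real}\colon\Hr^{d-1}(X,\KM_d)\to\Hr^{d-1}(X(\real),\Z/2)$ (Proposition \ref{prop:mod_2_cycle_surjective}, a theorem of Colliot-Th\'el\`ene--Scheiderer), which applies because $X$ is affine so that $\CH^d(X_\cplx)$ is uniquely divisible, hence $2$-torsion free. With this, the diagram chase of Lemma \ref{lem:fibre_product_diagram} goes through: your $2$-torsion class $\gamma_{d+1}^d(\alpha)$ lies in $\ker(\cdot 2)=\mathrm{im}(\beta)$ for the topological Bockstein $\beta\colon\Hr^{d-1}(X(\real),\Z/2)\to\Hr^d(X(\real),\Z(L))$; surjectivity of $\overline{\gamma_\real}$ in degree $d-1$ lets you lift its preimage to $\Hr^{d-1}(X,\KM_d)$, whence $\alpha$ lies in the image of the algebraic connecting map $\Hr^{d-1}(X,\KM_d)\to\Hr^d(X,\Ibf^{d+1}(\Lc))$ and therefore dies in $\CHW^d(X,\Lc)$. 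Without this degree-$(d-1)$ surjectivity statement (or an equivalent), your argument does not prove the theorem. The remaining compatibilities you list (realization of the Euler class, commutativity of the right-hand square) are handled in the paper as you anticipate, via the $(p^*)^{-1}\circ s_*(1)$ description and the cited compatibility of pushforwards.
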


\begin{rem}
As mentioned in the introduction, this result can be seen as a generalization of \cite[Theorem 4.30]{Bhatwadekar06}. Indeed, the above theorem applies for instance also to affine varieties $X$ of dimension $n$, which are isomorphic in $\mathcal{H}(\real)$ to an affine variety of dimension $d$.  As for the cases mentioned in \cite[Theorem 4.30]{Bhatwadekar06}, they follow from Proposition \ref{prop:chow--witt_as_fibre_product} and the following arguments:
\begin{enumerate}
\item If $X$ has no compact connected components, then $\Hr^d(X(\real),\Z(L))=0$ and the map $\CHW^d(X,\Lc)\rightarrow\CH^d(X)$ is an isomorphism thus the top Chern class is a sufficient obstruction. 
\item If $d$ is even and $\omega_{X/\real}(\Rb)\vert_C\neq L\vert_C$ on all compact connected component $C$, then $\Hr^d(X(\real),\Z(L))$ is a $2$-torsion group so the reduction mod $2$ map from $\Hr^d(X(\Rb),\Zb(L))$ to $\Hr^d(X(\real),\Z/2)$ is an isomorphism. Therefore the map $\CHW^d(X,\Lc)\rightarrow\CH^d(X)$ is an isomorphism and again the top Chern class is a sufficient obstruction.
\item If $d$ is odd, then the topological Euler class $e(\Esc(\Rb))$ is $2$-torsion, and is therefore trivial on the compact connected components $C$ for which $\omega_{X/\real}(\Rb)\vert_C= L\vert_C$. On the other hand, it is equal to the top Stiefel-Whitney class $w_d(\Esc(\Rb))$ on the compact connected components $C$ where $\omega_{X/\real}(\Rb)\vert_C\neq L\vert_C$. Since $w_d(\Esc(\Rb))$ is the image of $c_d(\Esc)$ under the mod $2$ cycle class map $\CH^d(X)\rightarrow\Hr^d(X(\Rb),\Zb/2)$ by \cite[Théorème 4]{Kahn87}, the vanishing of $c_d(\Esc)$ guarantees that $w_d(\Esc(\Rb))=0$ and thus that $e(\Esc(\Rb))=0$. Therefore the vanishing of the top Chern class is once more a sufficient obstruction in case $d$ is odd.
\end{enumerate}
\end{rem}

\subsection{Proof of Theorem~\ref{thm:corank_zero}}
To prove the theorem, we first recall from Subsection \ref{subsection:real_cycle_class_maps} that there is a mod $2$ cycle class map 
\[
\overline{\gamma_\real}:\Hr^{d-1}(X,\KM_d)\rightarrow\Hr^{d-1}(X(\real),\Z/2)
\] 
defined using Colliot-Thélène and Parimala's signature mod $2$. We will need the following proposition.

\begin{prop}\label{prop:mod_2_cycle_surjective}
If the group $\CH^d(X_{\cplx})$ is $2$-torsion free, the homomorphism $\overline{\gamma_\real}$ is surjective. 
\end{prop}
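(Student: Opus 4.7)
The plan is to factor $\overline{\gamma_\real}$ through Milnor K-theory modulo $2$ and analyze the two resulting maps, combining Jacobson's Theorem~\ref{theo:jacobson} with a transfer argument for the finite étale double cover $\pi\colon X_\cplx\to X$. Since $\iota_*\Zb/2$ is annihilated by $2$, the morphism $\overline{\mathrm{sign}}\colon\KM_d\to\iota_*\Zb/2$ factors through the canonical surjection $\KM_d\twoheadrightarrow\KM_d/2$; via the Milnor conjecture on quadratic forms one has $\KM_d/2\cong\overline{\Ibf}^d$, and the induced map $\overline{\Ibf}^d\to\iota_*\Zb/2$ is the mod $2$ signature appearing as the right-hand column of diagram~\eqref{diag:morphism_exact_sequence_real}. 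Hence $\overline{\gamma_\real}$ admits a factorization
\[
H^{d-1}(X,\KM_d)\xrightarrow{f}H^{d-1}(X,\KM_d/2)\xrightarrow{g}H^{d-1}(X(\real),\Zb/2).
\]

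For the second factor $g$, the plan is a diagram chase in the morphism of long exact sequences induced by~\eqref{diag:morphism_exact_sequence_real}. Jacobson's Theorem applied to $\Ibf^{d+1}$ (for which $t=d+1>d$) yields isomorphisms $H^n(X,\Ibf^{d+1})\xrightarrow{\sim}H^n(X(\real),\Zb)$ for every $n$. Given $\beta\in H^{d-1}(X(\real),\Zb/2)$, its Bockstein $\delta\beta\in H^d(X(\real),\Zb)$ is $2$-torsion and lifts through the Jacobson isomorphism to a $2$-torsion class in $H^d(X,\Ibf^{d+1})$. Chasing this class through the long exact sequence attached to $0\to\Ibf^{d+1}\to\Ibf^d\to\overline{\Ibf}^d\to 0$, and then correcting the resulting preimage by an element of $H^{d-1}(X,\Ibf^d)$ whose image in $H^{d-1}(X(\real),\Zb)$ absorbs the indeterminacy (using Jacobson in degree $d-1$), produces a preimage of $\beta$ in $H^{d-1}(X,\overline{\Ibf}^d)$.

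For the first factor $f$ and the overall surjectivity of $g\circ f$, the hypothesis on $\CH^d(X_\cplx)$ enters decisively. The long exact sequence for $0\to\KM_d\xrightarrow{\cdot 2}\KM_d\to\KM_d/2\to 0$ identifies $\operatorname{Coker}(f)$ with $\CH^d(X)[2]$; the transfer relation $\pi_*\pi^*=\cdot 2$ gives $\ker(\pi^*)\subseteq\CH^d(X)[2]$, while the hypothesis $\CH^d(X_\cplx)[2]=0$ yields the reverse inclusion, hence $\CH^d(X)[2]=\ker(\pi^*)$. The main obstacle will then be the last step: reconciling this algebraic description of $\operatorname{Coker}(f)$ with the topological Bockstein picture on $X(\real)$, either by producing, for every $2$-torsion class in $\CH^d(X)$, an algebraic lift whose $g$-image is already reachable from the image of $f$, or equivalently by verifying that the induced map $\operatorname{Coker}(f)\to H^{d-1}(X(\real),\Zb/2)/\operatorname{Im}(\overline{\gamma_\real})$ vanishes. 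The Jacobson-based chase for $g$ is essentially mechanical; it is this compatibility between the algebraic transfer and the topological signature that is the real technical heart of the proof, and it is here that the hypothesis on $\CH^d(X_\cplx)$ is used most substantively.
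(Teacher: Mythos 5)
Your proposal does not close the argument. The paper's own proof is a two-line reduction: after discarding the trivial case $X(\real)=\emptyset$, the statement \emph{is} \cite[Corollary 4.3.(b)]{Colliot96}, the hypothesis that $\CH^d(X_{\cplx})$ be $2$-torsion free being a convenient (stronger than necessary) form of the condition in \cite[Theorem 4.2.(b)]{Colliot96}. What you propose is essentially to re-derive that theorem of Colliot-Th\'el\`ene and Scheiderer from scratch, and your text stops precisely where their work begins.

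Concretely, there are two gaps. First, the surjectivity of $g\colon \Hr^{d-1}(X,\KM_d/2)\rightarrow\Hr^{d-1}(X(\real),\Z/2)$ is not ``essentially mechanical''. In your chase you lift the Bockstein $\delta\beta\in\Hr^d(X(\real),\Z)$ to a class $\alpha\in\Hr^d(X,\Ibf^{d+1})$ via Jacobson's isomorphism, but to pull $\alpha$ back through the connecting map of $0\to\Ibf^{d+1}\to\Ibf^d\to\overline{\Ibf}^d\to 0$ you must know that $\alpha$ dies in $\Hr^d(X,\Ibf^d)$, and Jacobson's theorem gives no control there ($t=d$ lies outside its range $t>d$); this surjectivity is itself one of the Colliot-Th\'el\`ene--Scheiderer comparison results (their \S 3), not a formal consequence of Theorem \ref{theo:jacobson}. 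Second, and more seriously, you explicitly defer what you yourself call ``the real technical heart'': having identified $\operatorname{Coker}(f)$ with ${}_2\CH^d(X)=\ker(\pi^*)$, you must still show that the induced map $\operatorname{Coker}(f)\rightarrow\Hr^{d-1}(X(\real),\Z/2)/g(\operatorname{Im}f)$ vanishes. This cannot be waved away: ${}_2\CH^d(X)$ is typically nonzero (for $X$ smooth affine it is $(\Z/2)^s$ with $s$ the number of compact connected components of $X(\real)$), so the cokernel of $f$ genuinely threatens the surjectivity of $g\circ f$, and ruling out the obstruction is exactly the content of \cite[Theorem 4.2]{Colliot96}, proved there by a delicate comparison of coniveau spectral sequences for $X$, $X_{\cplx}$ and the real locus. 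As written, your proposal is a correct description of the shape of the known proof together with an admission that its decisive step is missing; it is not a proof. The efficient fix is the paper's: quote \cite[Corollary 4.3.(b)]{Colliot96} directly.
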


\begin{proof}
If $X(\real)=\emptyset$, then the right-hand term is trivial and there is nothing to prove. We may thus suppose that $X(\real)\neq \emptyset$, in which case the result is \cite[Corollary 4.3.(b)]{Colliot96}.
\end{proof}

\begin{rem}
The assumption that $\CH^d(X_{\cplx})$ is $2$-torsion free is too strong, the precise condition being given by \cite[Theorem 4.2.(b)]{Colliot96}. It is however sufficient for our purpose. 
\end{rem}

\begin{rem}
In \cite[Section 4]{Colliot96}, Colliot-Thélène and Scheiderer use $\mathrm{K}^\mathrm{Q}$-cohomology where $\mathrm{K}^\mathrm{Q}$ is Quillen $K$-theory rather than Milnor $K$-theory; namely they consider $\Hr^{d-1}(X,\mathbf{K}_d^\mathrm{Q})$ where $\mathbf{K}_d^\mathrm{Q}$ is the sheaf associated with the presheaf $U\mapsto\mathrm{K}_d^\mathrm{Q}(U)$ instead of $\Hr^{d-1}(X,\KM_d)$. However, we have a natural isomorphism $\Hr^{d-1}(X,\KM_d)\cong\Hr^{d-1}(X,\mathbf{K}_d^\mathrm{Q})$: this follows from the fact that $\mathbf{K}_d^\mathrm{Q}$ and $\KM_d$ both admit a Gersten-type resolution and that there is a natural isomorphism $\mathrm{K}_i^\mathrm{Q}(F)\cong\mathrm{K}_i^\mathrm{M}(F)$ for any field $F$ if $i\leqslant 2$.
\end{rem}

We will also need the next (arguably well-known) lemma below.

\begin{lem}\label{lem:fibre_product_diagram}
Let
\[
\xymatrix{A\ar[r]^-u\ar[d]_-f & B\ar[r]^-v\ar[d]_-g & C\ar[r]^-w\ar[d]_-h & D\ar[d]_-i \\
A'\ar[r]_-{u'} & B'\ar[r]\ar[r]_-{v'} & C'\ar[r]\ar[r]_-{w'} & D'}
\]
be a commutative diagram with exact rows in the category of abelian groups. Assume that $g$ is an isomorphism and that $f$ is an epimorphism. Then the following sequence 
\[
A'\rightarrow B'\xrightarrow{v\circ g^{-1}}C\rightarrow D
\] 
of abelian groups is exact. Moreover, the map $(w,h):C\rightarrow D\times_{D'} C$ is injective, and is an isomorphism if $w$ is surjective.
\end{lem}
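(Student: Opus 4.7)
My plan is to prove both assertions by elementary diagram chase in the category of abelian groups. The two statements are logically independent, so I would treat them in sequence, being careful along the way to record exactly where each hypothesis on $f$ and $g$ is used. (I interpret the target of the map at the end as $D\times_{D'}C'$, the apparent typo in the statement.)

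For the four-term exact sequence $A'\to B'\xrightarrow{v\circ g^{-1}} C\to D$, I would begin with exactness at $C$, which is essentially free: since $g$ is an isomorphism, $\operatorname{im}(v\circ g^{-1})=\operatorname{im}(v)$, and this equals $\ker(w)$ by exactness of the top row at $C$. For exactness at $B'$, I would show both inclusions. To see $\operatorname{im}(u')\subseteq\ker(v\circ g^{-1})$: given $b'=u'(a')$, use surjectivity of $f$ to write $a'=f(a)$ for some $a\in A$; then commutativity of the left square gives $b'=g(u(a))$, so $v(g^{-1}(b'))=v(u(a))=0$ by exactness of the top row at $B$. For the reverse inclusion: given $b'\in\ker(v\circ g^{-1})$, we have $g^{-1}(b')\in\ker v=\operatorname{im}(u)$, so $g^{-1}(b')=u(a)$ for some $a\in A$, whence $b'=g(u(a))=u'(f(a))\in\operatorname{im}(u')$. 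This is the only step that genuinely requires $f$ surjective.

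For the pullback description, I would prove injectivity and (conditional) surjectivity of $(w,h)$ separately. For injectivity, suppose $w(c)=0$ and $h(c)=0$; exactness at $C$ gives $c=v(b)$ for some $b$, then $h(c)=v'(g(b))=0$ forces $g(b)=u'(a')$ for some $a'\in A'$; writing $a'=f(a)$ and using that $g$ is injective then yields $b=u(a)$, so $c=v(u(a))=0$. For surjectivity when $w$ is surjective, take $(d,c')\in D\times_{D'}C'$, pick any $c_0\in w^{-1}(d)$, and observe that commutativity and the compatibility $i(d)=w'(c')$ force $c'-h(c_0)\in\ker(w')=\operatorname{im}(v')$; writing $c'-h(c_0)=v'(b')$ and lifting $b'=g(b)$ via surjectivity of $g$, the element $c_0+v(b)$ is sent to $(d,c')$ under $(w,h)$.

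There is no serious obstacle; the whole lemma is a standard diagram chase. The only care needed is bookkeeping: the surjectivity of $f$ is used each time an element of $A'$ must be lifted to $A$, whereas $g$ being an isomorphism is used both to make sense of $v\circ g^{-1}$ on all of $B'$ and to lift elements of $B'$ back to $B$ (with uniqueness) when comparing $\ker$'s and $\operatorname{im}$'s across the two rows.
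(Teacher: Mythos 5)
Your proof is correct and follows essentially the same elementary diagram chase as the paper (which merely packages the exactness-at-$B'$ step as the five-lemma isomorphism $\operatorname{coker}(u)\cong\operatorname{coker}(u')$, equivalent to your two-inclusion argument); you also correctly identify the typo, the pullback being $D\times_{D'}C'$.
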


\begin{proof}
The five lemma shows that $g$ induces an isomorphism $\overline g\colon \mathrm{coker}(u)\to \mathrm{coker}(u')$, proving the first statement.

We now show that $(w,h):C\rightarrow D\times_{D'} C$ is injective. Let $c\in C$ be such that $w(c)=0$ and $h(c)=0$. Since $w(c)=0$, there exists $b'\in B'$ such that $c=v\circ g^{-1}(b')$. Now $0=h(c)=h\circ v\circ g^{-1}=v'(b')$ hence there exists $a'\in A'$ such that $u'(a')=b'$. By the above exact sequence, we then have 
\[
c=v\circ g^{-1}(b')=v\circ g^{-1}\circ u'(a')=0,
\] 
showing the required injectivity.

Assume now that $w$ is surjective and let $(d,c')\in D\times C'$ be such that $w'(c')=i(d)$. Since $w$ is an epimorphism, there exists $c\in C$ such that $w(c)=d$. Set $\xi'=h(c)$. Then $w'(\xi')=w'(c')$, hence there exists $b'\in B'$ such that $c'-\xi'=v'(b')$. Set $\delta=v\circ g^{-1}(b')$. Then $w(c+\delta)=w(c)$ by the above exact sequence, and \[h(c+\delta)=h(c)+h(\delta)=\xi'+h\circ v\circ g^{-1}(b')=\xi'+u'\circ g(b')=\xi'+c'-\xi'=c'.\] Thus $(w,h):C\rightarrow D\times_{D'} C$ is surjective as required.
\end{proof}

As an immediate corollary, we obtain the following useful proposition.

\begin{prop}\label{prop:chow--witt_as_fibre_product}
Suppose that $\CH^d(X_{\cplx})$ is $2$-torsion free. Let $\Lc$ be a line bundle on $X$; denote by $L$ the topological line bundle $L=\Lc(\real)$. Then the morphism $\CHW^d(X,\Lc)\rightarrow\CH^d(X)\times\Hr^d(X(\real),\Z(L))$ provided by the comparison morphism and the real cycle class map induces an isomorphism 
\[
\CHW^d(X,\Lc)\cong\CH^d(X)\times_{\Hr^d(X(\real),\Z/2)}\Hr^d(X(\real),\Z(L))
\] 
of abelian groups.
\end{prop}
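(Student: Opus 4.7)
The plan is to apply Lemma~\ref{lem:fibre_product_diagram} to a commutative diagram with exact rows comparing a cohomological long exact sequence attached to $\KMW_d(\Lc)$ with a Bockstein sequence on $X(\real)$.

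For the top row, I will use the short exact sequence of Nisnevich sheaves
\[
0 \to \Ibf^{d+1}(\Lc) \to \KMW_d(\Lc) \to \KM_d \to 0
\]
arising from the pullback description of Milnor--Witt $\mathrm{K}$-theory as $\KM_d\times_{\KM_d/2}\Ibf^d(\Lc)$. Jacobson's Theorem~\ref{theo:jacobson} (applied with $t=d+1>d$) together with the vanishing of $\Hr^{d+1}(X(\real),\Z(L))$ for dimension reasons give $\Hr^{d+1}(X,\Ibf^{d+1}(\Lc))=0$, whence the associated long exact sequence yields
\[
\Hr^{d-1}(X,\KM_d)\xrightarrow{\partial}\Hr^d(X,\Ibf^{d+1}(\Lc))\to\CHW^d(X,\Lc)\to\CH^d(X)\to 0.
\]
For the bottom row I take the Bockstein long exact sequence attached to $0\to\Z(L)\xrightarrow{\cdot 2}\Z(L)\to\Z/2\to 0$ on $X(\real)$, terminating in $\Hr^d(X(\real),\Z/2)\to 0$.

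These two rows will then be linked by a morphism of short exact sequences of sheaves whose induced long exact sequences are precisely the two rows. The vertical arrows are $\mathrm{sign}_{d+1}:\Ibf^{d+1}(\Lc)\to\iota_*\Z(L)$, the composite $\widetilde{\mathrm{sign}}(\Lc):\KMW_d(\Lc)\to\Ibf^d(\Lc)\xrightarrow{\mathrm{sign}_d}\iota_*\Z(L)$, and $\overline{\mathrm{sign}}:\KM_d\to\iota_*\Z/2$. Commutativity of the two squares of sheaves is essentially recorded in diagram~\eqref{diag:morphism_exact_sequence_real}: the left square follows from $\mathrm{sign}_d\circ(\Ibf^{d+1}(\Lc)\hookrightarrow\Ibf^d(\Lc))=2\cdot\mathrm{sign}_{d+1}$, and the right square reduces to the factorization of $\overline{\mathrm{sign}}:\KM_d\to\iota_*\Z/2$ through $\KM_d/2=\overline{\Ibf}^d$ recalled just after \eqref{diag:morphism_exact_sequence_real}. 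Passing to cohomology and invoking Proposition~\ref{prop:cohomology_locally_constant_sheaves} to identify $\Hr^n(X,\iota_*-)$ with $\Hr^n(X(\real),-)$ produces the required commutative diagram with exact rows, whose vertical arrows are, from left to right, $\overline{\gamma_\real}$, $\gamma_{d+1}^d(\Lc)$, $\widetilde{\gamma_\real}(\Lc)$ and $\overline{\gamma_\real}$.

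Lemma~\ref{lem:fibre_product_diagram} then applies: Jacobson's Theorem~\ref{theo:jacobson} yields that $\gamma_{d+1}^d(\Lc)$ is an isomorphism, and Proposition~\ref{prop:mod_2_cycle_surjective}, available by the hypothesis that $\CH^d(X_{\cplx})$ is $2$-torsion free, gives the surjectivity of $\overline{\gamma_\real}:\Hr^{d-1}(X,\KM_d)\to\Hr^{d-1}(X(\real),\Z/2)$. The map $\CHW^d(X,\Lc)\to\CH^d(X)$ is already surjective by the top row, so the second conclusion of the lemma applies and produces the desired isomorphism. The main non-mechanical step is establishing the commutativity of the leftmost square, a Bockstein-naturality statement that reduces to the sheaf-level compatibility of signature morphisms recorded in \eqref{diag:morphism_exact_sequence_real}.
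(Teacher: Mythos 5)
Your proof is correct and follows essentially the same route as the paper: the same short exact sequence $0\to\Ibf^{d+1}(\Lc)\to\KMW_d(\Lc)\to\KM_d\to 0$ mapped via the signature morphisms to the Bockstein sequence, combined with Jacobson's theorem, Proposition~\ref{prop:mod_2_cycle_surjective} and Lemma~\ref{lem:fibre_product_diagram}. The only (immaterial) variation is that you obtain the surjectivity of $\CHW^d(X,\Lc)\to\CH^d(X)$ from $\Hr^{d+1}(X,\Ibf^{d+1}(\Lc))\cong\Hr^{d+1}(X(\real),\Z(L))=0$ via Jacobson plus the topological dimension of $X(\real)$, whereas the paper invokes the bound of Zariski cohomological dimension by the Krull dimension of $X$.
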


\begin{proof}
Consider the following commutative diagram with exact rows:
\[
\xymatrix{0 \ar[r] & \Ibf^{d+1}(\Lc) \ar[r] \ar[d]_-{\mathrm{sign}(\Lc)} & \KMW_d(\Lc) \ar[r] \ar[d]_-{\widetilde{\mathrm{sign}}(\Lc)} & \KM_d \ar[r] \ar[d]_-{\overline{\mathrm{sign}}} & 0 \\
0 \ar[r] & \iota_*\Z(L) \ar[r]_-{\cdot 2} & \iota_*\Z(L) \ar[r] & \iota_*\Z/2 \ar[r] & 0}
\]
of sheaves of abelian groups on $X$. It induces a commutative ladder with exact rows:
\[
\xymatrix{ \Hr^{d-1}(X,\KM_d) \ar[r] \ar[d]_-{\overline{\gamma_\real}} & \Hr^{d}(X,\Ibf^{d+1}(\Lc)) \ar[r] \ar[d] & \widetilde{\CH}^d(X,\Lc) \ar[r] \ar[d]  & \CH^d(X) \ar[d] \\
\Hr^{d-1}(X(\real),\Z/2) \ar[r]_-{\beta}   & \Hr^d(X(\real),\Z(L)) \ar[r]_-{\cdot 2}  & \Hr^d(X(\real),\Z(L)) \ar[r] & \Hr^d(X(\real),\Z/2).}
\]
In this diagram, the map $\overline{\gamma_\real}$ is surjective by Proposition \ref{prop:mod_2_cycle_surjective} and, by Jacobson's theorem \ref{theo:jacobson}, the real cycle class map $\Hr^d(X,\Ibf^{d+1}(\Lc))\rightarrow\Hr^d(X(\real),\Z(L))$ is an isomorphism. By Lemma \ref{lem:fibre_product_diagram}, this shows that the map 
\[
\CHW^d(X,\Lc)\rightarrow\CH^d(X)\times_{\Hr^d(X(\real),\Z/2)}\Hr^d(X(\real),\Z(L))
\]
is injective. To prove that it is surjective, it suffices to show that the comparison map 
\[
\CHW^d(X,\Lc)\rightarrow\CH^d(X)
\] 
is surjective. But the cokernel of this map is a subgroup of $\Hr^{d+1}(X,\Ibf^{d+1}(\Lc))$ which vanishes because the Zariski cohomological dimension is bounded above by the Krull dimension and $\dim X=d$.
\end{proof}

\begin{proof}[Proof of Theorem \ref{thm:corank_zero}]
Let $\Lc=\det(\mathscr{E})^\vee$. If $d=1$, the result is obvious, and thus we may suppose that $d\geq 2$. Since $X$ is affine, the group $\CH^d(X_{\cplx})$ is uniquely divisible, and consequently we above proposition yields an isomorphism
\[
\CHW^d(X,\Lc)\cong\CH^d(X)\times_{\Hr^d(X(\real),\Z/2)}\Hr^d(X(\real),\Z(L)).
\] 
Note that the Euler class $e(\mathscr{E})$ maps to $(c_d(\mathscr{E}),e(\mathscr{E}(\real)))$ (where $e(\mathscr{E}(\real))$ is the Euler class of the topological line bundle $\mathscr{E}(\real)$) under the map $\widetilde{\CH}^d(X,\Lc)\rightarrow\CH^d(X)\times\Hr^d(X(\real),\Z(L))$. Indeed, these classes are all defined by formulas of the type $(p^*)^{-1}\circ s_*(1)$ where $1$ is the unit, $s$ is the zero section of the vector bundle under consideration and $p$ is its projection morphism, which induces an isomorphism in cohomology by homotopy invariance. The claim then follows from the compatibility of the maps $\widetilde{\CH}^d(X,\Lc)\rightarrow\CH^d(X)$ and $\widetilde{\CH}^d(X,\Lc)\rightarrow\Hr^d(X(\real),\Z(L))$ with pushforwards, pullbacks and units. This is checked in \cite[Section 10.4]{Fasel08a} for the first map and in \cite[Section 4]{Hornbostel21} for the second.

Now assume that $e(\mathscr{E})=0$. Then $c_d(\mathscr{E})=0$ and $e(\mathscr{E}(\real))=0$ by the above claim. Since $\mathscr{E}(\real)$ is of corank $0$, the vanishing of its Euler class implies that it splits off a trivial line bundle. Reciprocally, if $c_d(\mathscr{E})=0$ and $\mathscr{E}(\real)$ splits off a trivial line bundle, then $e(\mathscr{E}(\real))=0$ and therefore $e(\mathscr{E})=0$, showing that $E$ splits a trivial line bundle.
\end{proof}


\section{The Euler class of corank 1 vector bundles}

In this section, we study the case of corank one vector bundles on a smooth affine variety $X$ of dimension $d\geq 3$. Once again, we have a commutative diagram 
\[
\xymatrix{\mathcal{V}_{d-2}(X,\Lc)\ar[r]^-{s_{d-2}}\ar[d]_-{r_{d-2}} & \mathcal{V}_{d-1}(X,\Lc)\ar[r]^-{e}\ar[d]_-{r_{d-1}}& \CHW^{d-1}(X,\Lc)\ar@{-->}[d]_-{\widetilde{\gamma_\real}(L)} \\
\mathcal{V}^{\mathrm{top}}_{d-2}(X(\real),L)\ar[r]_-{s_{d-2}} & \mathcal{V}^{\mathrm{top}}_{d-1}(X(\real),L)\ar[r]_-{e(\real)} & \mathrm{H}^{d-1}(X(\real),L(\real))}
\]
and we are interested in understanding the right-hand vertical map. 

\subsection{General results}

\begin{prop}\label{prop:colliotsch}
Let $X$ be a smooth real affine variety of dimension $d>1$ and let $\Lc$ be a line bundle on $X$. Let further $T$ denote the set of compact connected components of $X(\real)$. The real cycle class map $\gamma^{d-1}_d$ induces a split exact sequence 
\[
0\rightarrow\bigoplus_T\Z/2\rightarrow\Hr^{d-1}(X,\Ibf^d(\Lc))\xrightarrow{\gamma^{d-1}_d}\Hr^{d-1}(X(\real),\Z(L))\rightarrow 0
\] 
of abelian groups.
\end{prop}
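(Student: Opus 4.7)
The plan is to compare two long exact sequences. On the algebraic side I take the short exact sequence of Nisnevich sheaves
\[
0 \to \Ibf^{d+1}(\Lc) \to \Ibf^d(\Lc) \to \overline{\Ibf}^d \to 0,
\]
and on the topological side the Bockstein $0 \to \Z(L) \xrightarrow{\cdot 2} \Z(L) \to \Z/2 \to 0$. These are linked via the signature morphisms as in diagram~\eqref{diag:morphism_exact_sequence_real}, producing a ladder of long exact sequences in cohomology. By Jacobson's theorem~\ref{theo:jacobson}, the outer terms of the upper sequence (those involving $\Ibf^{d+1}(\Lc)$) are identified with their topological counterparts through the isomorphisms $\gamma^n_{d+1}$, for every $n$.

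For surjectivity of $\gamma^{d-1}_d$, I use that $X$ is affine, so $\CH^d(X_{\cplx})$ is uniquely divisible and in particular $2$-torsion-free. Proposition~\ref{prop:mod_2_cycle_surjective} then gives surjectivity of $\overline{\gamma_\real}\colon \Hr^{d-1}(X, \KM_d) \to \Hr^{d-1}(X(\real), \Z/2)$; this map factors through $\Hr^{d-1}(X, \overline{\Ibf}^d)$ via the Milnor conjecture identification $\overline{\Ibf}^d \cong \KM_d/2$, so the relevant column of the ladder is surjective. A standard four-lemma chase then yields surjectivity of $\gamma^{d-1}_d$.

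For the kernel, the key observation is the factorization $\gamma^{d-1}_d = \gamma^{d-1}_{d+1} \circ f_*$, where $f\colon \Ibf^d(\Lc) \to \Ibf^{d+1}(\Lc)$ is multiplication by $2$; this comes from the identity $\mathrm{sign}_{d+1} \circ f = \mathrm{sign}_d$ recalled in Subsection~\ref{subsection:real_cycle_class_maps}. Since $\gamma^{d-1}_{d+1}$ is an isomorphism, $\ker \gamma^{d-1}_d$ is identified with $\ker f_*$. A diagram chase in the ladder, using the vanishing of $\Hr^{d+1}$ on the affine $d$-fold $X$, should identify this kernel with $\Hr^d(X(\real), \Z/2)$; the latter group is $\bigoplus_T \Z/2$ by the standard fact that the top $\Z/2$-cohomology of a $d$-manifold vanishes on non-compact connected components and equals $\Z/2$ on compact ones.

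For the splitting I would construct an explicit section: for each compact component $C \in T$, exhibit a class in $\Hr^{d-1}(X, \Ibf^d(\Lc))$ whose image under the quotient map $\Hr^{d-1}(X, \Ibf^d(\Lc)) \to \Hr^{d-1}(X, \overline{\Ibf}^d)$ generates the corresponding $\Z/2$ factor in the kernel. The hardest step will be the kernel identification: the diagram chase requires a careful analysis of how the Bocksteins on the two sides interact with the signature, and it may need a strengthening of Proposition~\ref{prop:mod_2_cycle_surjective} in degree $d-2$ (via further results of Colliot-Thélène--Scheiderer) in order to close cleanly.
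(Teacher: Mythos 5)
Your surjectivity argument is fine (it is the same four-lemma chase the paper uses elsewhere, e.g.\ in Proposition \ref{prop:chow--witt_as_fibre_product}), and your reduction of the kernel to $\ker f_*$ via $\gamma^{d-1}_d=\gamma^{d-1}_{d+1}\circ f_*$ is correct. But the identification of $\ker f_*$ with $\bigoplus_T\Z/2$ --- which is the entire content of the proposition --- is not something a diagram chase in your ladder can produce, and this is where your proposal has a genuine gap. The ladder you set up is built on the short exact sequence $0\to\Ibf^{d+1}(\Lc)\to\Ibf^d(\Lc)\to\overline{\Ibf}^d\to 0$, whose connecting maps control the \emph{inclusion} $\Ibf^{d+1}(\Lc)\hookrightarrow\Ibf^d(\Lc)$, not the multiplication-by-$2$ map $f$ going the other way. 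To compute $\ker f_*$ you need the \emph{other} short exact sequence of sheaves,
\[
0\rightarrow\overline{\mathbf{K}}\rightarrow\Ibf^d(\Lc)\xrightarrow{\otimes\langle\langle -1\rangle\rangle}\Ibf^{d+1}(\Lc)\rightarrow 0,
\]
where $\overline{\mathbf{K}}$ is the kernel of $\cup(-1)\colon\mathscr{H}^d\to\mathscr{H}^{d+1}$; the surjectivity of $f$ as a map of sheaves and the identification of its kernel with $\overline{\mathbf{K}}$ are themselves nontrivial inputs. Granting this, the long exact sequence gives $\ker f_*$ as a quotient of $\Hr^{d-1}(X,\overline{\mathbf{K}})$, and two further non-formal facts are needed: the connecting map $\Hr^{d-2}(X,\Ibf^{d+1}(\Lc))\to\Hr^{d-1}(X,\overline{\mathbf{K}})$ vanishes and the resulting short exact sequence splits (this uses the vanishing of $\Hr^{2d-1}_{\et}(X_\cplx,\Z/2)$ for $X$ affine of dimension $d\geq 2$), and, crucially, $\Hr^{d-1}(X,\overline{\mathbf{K}})\cong\bigoplus_T\Z/2$. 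This last computation is a theorem of Colliot-Th\'el\`ene--Scheiderer about the high-degree \'etale cohomology of real varieties; it is not a consequence of Jacobson's theorem, of Proposition \ref{prop:mod_2_cycle_surjective}, or of any strengthening of it in degree $d-2$, and it is precisely the step your proposal leaves as ``should identify this kernel.''

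Put differently: your chase would need to control $\ker\bigl(\Hr^{d-1}(X,\overline{\Ibf}^d)\to\Hr^{d-1}(X(\real),\Z/2)\bigr)$, and knowing that kernel is essentially equivalent to the Colliot-Th\'el\`ene--Scheiderer comparison you are trying to avoid. Your proposed ``explicit section'' for the splitting is likewise not constructed; in the actual argument the splitting comes packaged with the vanishing of the connecting homomorphism above rather than from exhibiting classes component by component. So the skeleton (factor through $f_*$, use Jacobson for the target) is right, but the proof is incomplete without the exact sequence involving $\overline{\mathbf{K}}$ and the external computation of its degree $d-1$ cohomology.
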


\begin{proof}
Let $\overline{\mathbf{K}}$ denote the kernel of the morphism $\cup(-1):\mathscr{H}^d\rightarrow\mathscr{H}^{d+1}$. Recall from \cite[Lemma 3.1]{Lerbet24} that $\overline{\mathbf{K}}$ fits into an exact sequence 
\[
0\rightarrow\overline{\mathbf{K}}\rightarrow\Ibf^d(\Lc)\xrightarrow{\otimes\langle\langle -1\rangle\rangle}\Ibf^{d+1}(\Lc)\rightarrow 0
\] 
(in fact, the map from the kernel of the homomorphism $\otimes\langle\langle -1\rangle\rangle\colon\Ibf^d(\Lc)\rightarrow\Ibf^{d+1}(\Lc)$ to $\overline{\mathbf{K}}$ induced by the morphism $\Ibf^d(\Lc)\rightarrow\overline{\Ibf}^d\cong\mathscr{H}^d$ is an isomorphism). Since $X$ is affine of dimension $d\geqslant 2$, the étale cohomology group $\Hr_\et^{2d-1}(X_\cplx,\Z/2)$ vanishes, hence by \cite[Proposition 4.4]{Lerbet24} we have a split exact sequence 
\[
0\rightarrow\Hr^{d-1}(X,\overline{\mathbf{K}})\rightarrow\Hr^{d-1}(X,\Ibf^d(\Lc))\rightarrow\Hr^{d-1}(X,\Ibf^{d+1}(\Lc))\rightarrow 0
\] 
where the last non-trivial map is identified with the real cycle class map by Jacobson's isomorphism $\Ibf^{d+1}(\Lc)\cong\iota_*\Z(L)$. Hence it suffices to show that $\Hr^{d-1}(X,\overline{\mathbf{K}})\simeq\bigoplus_T\Z/2$ which follows from \cite[Corollary 3.2 (c)]{Colliot96}.
\end{proof}

The following proposition can be seen as a conditional analogue of Theorem \ref{thm:corank_zero}.
\begin{prop}\label{prop:corank_one_primary_obstruction}
Let $X$ be a smooth real affine variety of dimension $d\geq 2$, let $\mathscr{E}$ be a vector bundle of corank $1$ on $X$ and let $\Lc$ denote the dual of the determinant of $\mathscr{E}$. Assume that the mod $2$ real cycle class map 
\[
\overline{\gamma_\real}\colon \Hr^{d-2}(X,\KM_{d-1})\rightarrow\Hr^{d-2}(X(\real),\Z/2),
\]  
is surjective and that the homomorphism $\Hr^{d-1}(X,\Ibf^d(\Lc))\rightarrow{\CHW}^{d-1}(X,\Lc)$ induced by the inclusion $\Ibf^{d+1}(\Lc)\rightarrow\KMW_d(\Lc)$ vanishes on $\bigoplus_T\Z/2$, where $T$ is the set of compact connected components of $X(\real)$. Then the Euler class of $\mathscr{E}$ vanishes if, and only if, its top Chern class $c_{d-1}(\mathscr{E})$ and its topological Euler class $e(\mathscr{E}(\real))$ vanish.
\end{prop}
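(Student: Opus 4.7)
The plan is to adapt the diagram chase from the proof of Theorem \ref{thm:corank_zero} to the corank $1$ setting, replacing the short exact sequence of sheaves used there by the shifted one
\[
0 \to \Ibf^d(\Lc) \to \KMW_{d-1}(\Lc) \to \KM_{d-1} \to 0.
\]
The forward implication is immediate: the Euler class $e(\mathscr{E}) \in \CHW^{d-1}(X,\Lc)$ maps to $c_{d-1}(\mathscr{E})$ in $\CH^{d-1}(X)$ and to $e(\mathscr{E}(\real))$ under $\widetilde{\gamma_\real}(\Lc)$, so its vanishing forces the vanishing of these two classes. I focus on the converse.

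For the converse I would compare the sequence above with the Bockstein sequence $0 \to \iota_{\ast}\Z(L) \xrightarrow{\cdot 2} \iota_{\ast}\Z(L) \to \iota_{\ast}\Z/2 \to 0$ on $X$. A direct computation from the formulas of Subsection \ref{subsection:real_cycle_class_maps} shows that the composite $\Ibf^d(\Lc) \hookrightarrow \KMW_{d-1}(\Lc) \to \Ibf^{d-1}(\Lc) \xrightarrow{\mathrm{sign}_{d-1}} \iota_{\ast}\Z(L)$ equals $2\,\mathrm{sign}_d$, since the two signatures differ by a factor of $2$ on their common domain $\Ibf^d(\Lc)$. This factor of $2$ is precisely what makes $(\mathrm{sign}_d, \widetilde{\mathrm{sign}}(\Lc), \overline{\mathrm{sign}})$ into a morphism from the first short exact sequence to the second, and passing to cohomology yields a commutative ladder whose left, middle and right vertical arrows are $\gamma_d^{d-1}$, $\widetilde{\gamma_\real}(\Lc)$ and $\overline{\gamma_\real}$, respectively.

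Now assume $c_{d-1}(\mathscr{E}) = 0$ and $e(\mathscr{E}(\real)) = 0$, and denote by $\pi$ the map $\Hr^{d-1}(X,\Ibf^d(\Lc)) \to \CHW^{d-1}(X,\Lc)$ appearing in the second hypothesis. Exactness of the top row of the ladder together with $c_{d-1}(\mathscr{E})=0$ produces $\beta \in \Hr^{d-1}(X,\Ibf^d(\Lc))$ with $\pi(\beta) = e(\mathscr{E})$. By commutativity of the middle square, $2\,\gamma_d^{d-1}(\beta) = \widetilde{\gamma_\real}(\Lc)(e(\mathscr{E})) = e(\mathscr{E}(\real)) = 0$, so $\gamma_d^{d-1}(\beta)$ is $2$-torsion and lies in the image of the Bockstein $\beta_{\mathrm{Bock}} \colon \Hr^{d-2}(X(\real),\Z/2) \to \Hr^{d-1}(X(\real),\Z(L))$, say $\gamma_d^{d-1}(\beta) = \beta_{\mathrm{Bock}}(\mu)$. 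The first hypothesis lifts $\mu$ through $\overline{\gamma_\real}$ to a class $\kappa \in \Hr^{d-2}(X,\KM_{d-1})$. Letting $\partial$ be the connecting homomorphism of the top row and replacing $\beta$ by $\beta - \partial(\kappa)$ leaves $\pi(\beta) = e(\mathscr{E})$ unchanged while arranging $\gamma_d^{d-1}(\beta) = 0$: the first holds because $\pi \circ \partial = 0$ by exactness, the second by commutativity of the left square of the ladder. Proposition \ref{prop:colliotsch} then forces $\beta$ into the subgroup $\bigoplus_T \Z/2 \subseteq \Hr^{d-1}(X,\Ibf^d(\Lc))$, and the second hypothesis yields $e(\mathscr{E}) = \pi(\beta) = 0$.

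The main obstacle is the bookkeeping of the factor-of-$2$ discrepancy in the left square of the comparison ladder: the composite $\widetilde{\mathrm{sign}}(\Lc)\big|_{\Ibf^d(\Lc)}$ is $2\,\mathrm{sign}_d$ rather than $\mathrm{sign}_d$, and it is precisely this factor of $2$ that places $\gamma_d^{d-1}(\beta)$ inside the $2$-torsion subgroup and so forces the use of the surjectivity of $\overline{\gamma_\real}$. Once the signs and factors are correctly arranged, everything else is a routine diagram chase against Proposition \ref{prop:colliotsch}.
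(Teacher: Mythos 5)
Your proof is correct and follows essentially the same route as the paper: the same ladder comparing $0\to\Ibf^d(\Lc)\to\KMW_{d-1}(\Lc)\to\KM_{d-1}\to 0$ with the twisted Bockstein sequence, combined with Proposition \ref{prop:colliotsch} and the two hypotheses at exactly the same points. The only difference is presentational: the paper splits off the summand $B$ and invokes Lemma \ref{lem:fibre_product_diagram} to get injectivity into the fibre product, whereas you carry out the underlying element-level diagram chase by hand.
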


\begin{proof}
The commutative diagram with exact rows
\[
\xymatrix{0 \ar[r] & \Ibf^{d}(\Lc) \ar[r] \ar[d]_-{\mathrm{sign}(\Lc)} & \KMW_{d-1}(\Lc) \ar[r] \ar[d]_-{\widetilde{\mathrm{sign}}(\Lc)} & \KM_{d-1} \ar[r] \ar[d]_-{\overline{\mathrm{sign}}} & 0 \\
0 \ar[r] & \iota_*\Z(L) \ar[r]_-{\cdot 2} & \iota_*\Z(L) \ar[r] & \iota_*\Z/2 \ar[r] & 0}
\]
induces a commutative ladder with exact rows
\[
\xymatrix{\Hr^{d-2}(X,\KM_{d-1}) \ar[r] \ar[d]_-{\overline{\gamma_\real}} & \Hr^{d-1}(X,\Ibf^{d}(\Lc)) \ar[r] \ar[d] & {\CHW}^{d-1}(X,\Lc) \ar[r] \ar[d]   & \CH^{d-1}(X) \ar[d] \\
\Hr^{d-2}(X(\real),\Z/2) \ar[r]_-{\beta}   & \Hr^{d-1}(X(\real),\Z(L)) \ar[r]_-{\cdot 2}   & \Hr^{d-1}(X(\real),\Z(L)) \ar[r] & \Hr^{d-1}(X(\real),\Z/2).}
\]
Recall that the map $\Hr^{d-1}(X,\Ibf^d(\Lc))\rightarrow\Hr^{d-1}(X(\real),\Z(L))$ is a split epimorphism. Denote by $B$ the image of the splitting provided by Proposition \ref{prop:colliotsch}, which is a direct summand of $\Hr^{d-1}(X,\Ibf^d(\Lc))$. Then the following diagram:
\[
\xymatrix{\Hr^{d-2}(X,\KM_{d-1}) \ar[r] \ar[d]_-{\overline{\gamma_\real}} & B \ar[r] \ar[d] & {\CHW}^{d-1}(X,\Lc) \ar[r] \ar[d]      & \CH^{d-1}(X) \ar[d] \\
\Hr^{d-2}(X(\real),\Z/2) \ar[r]_-{\beta}      & \Hr^{d-1}(X(\real),\Z(L)) \ar[r]_-{\cdot 2}             & \Hr^{d-1}(X(\real),\Z(L)) \ar[r] & \Hr^{d-1}(X(\real),\Z/2)}
\]
where the map $\Hr^{d-2}(X,\KM_{d-1})\rightarrow B$ is the composite 
\[
\Hr^{d-2}(X,\KM_{d-1})\rightarrow\Hr^{d-1}(X,\Ibf^d(\Lc))=B\oplus\bigoplus_T\Z/2\rightarrow B
\] 
and the map $B\rightarrow{\CHW}^{d-1}(X,\Lc)$ is the restriction of the map $\Hr^{d-1}(X,\Ibf^{d+1}(\Lc))\rightarrow{\CHW}^{d-1}(X,\Lc)$, is commutative. Moreover, its rows are \emph{exact}. Indeed, we need only consider the top row. Exactness at ${\CHW}^{d-1}(X,\Lc)$ follows from the fact that since the map $\Hr^{d-1}(X,\Ibf^d(\Lc))\rightarrow{\CHW}^{d-1}(X,\Lc)=\bigoplus_T\Z/2\oplus B$ vanishes on $\bigoplus_T\Z/2$, its image is equal to the image of its restriction to $B$, and exactness at $B$ is clear. Now the map $B\rightarrow\Hr^{d-1}(X(\real),\Z(\Lc))$ is an isomorphism by construction and $\overline{\gamma_\real}$ is surjective. By Lemma \ref{lem:fibre_product_diagram}, the map 
\[
\widetilde{\CH}^{d-1}(X,\Lc)\rightarrow\CH^{d-1}(X)\times\Hr^{d-1}(X(\real),\Z(\Lc))
\] 
is injective. We then conclude as in the proof of Theorem \ref{thm:corank_zero}.
\end{proof}

\begin{rem}
It should be noted that the condition on $\overline{\gamma_\real}$ is very difficult to verify in practice. The second condition is satisfied if $X(\real)$ has no compact connected components, for instance if $X(\real)$ is empty (in which case the surjectivity of $\overline{\gamma_\real}$ is obvious).
\end{rem}

\subsection{Threefolds}

As written above, Proposition \ref{prop:corank_one_primary_obstruction} depends on the homomorphism $\overline{\gamma_\real}$. In this section, we prove that its conclusion is wrong in general. More precisely,  the goal of this section is to establish the following theorem.
\begin{thm}\label{thm:realthreefolds}
There exists an affine open subscheme $U$ of $\mathbb{P}_\real^3$ such that $U(\real)=\mathbb{P}^3(\real)$ and an explicit vector bundle $\mathscr{E}$ of rank $2$ on $U$ having the following properties:
\begin{itemize}[noitemsep,topsep=1pt]
\item $\det(\mathscr{E})$ is trivial.
\item $c_2(\mathscr{E})=0$.
\item $e(\mathscr{E})\neq 0\in{\CHW}^2(U)$.
\item $\mathscr{E}(\real)\simeq \mathbb{P}^3(\real)\times \real^2$.
\end{itemize}
\end{thm}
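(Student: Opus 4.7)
The plan is to take $U \subset \mathbb{P}^3_{\real}$ to be the complement of a smooth quadric hypersurface without real points, for instance $Q = V(x_0^2+x_1^2+x_2^2+x_3^2)$; then $U$ is affine (complement of a hypersurface in $\mathbb{P}^3$) and $U(\real) = \mathbb{P}^3(\real)$. The heart of the argument is to produce a nonzero class $\alpha \in \CHW^2(U)$ in the kernel of the natural map $\CHW^2(U) \to \CH^2(U) \times \Hr^2(\mathbb{P}^3(\real), \Z)$ induced by the forgetful map and Jacobson's real cycle class map, and then to realize $\alpha$ as the Euler class of a rank-$2$ bundle $\Esc$ with trivial determinant. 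Granting these two points, the required properties follow essentially formally: $c_2(\Esc)$ is the image of $\alpha$ in $\CH^2(U)$, hence zero; the topological Euler class of $\Esc(\real)$ is $\widetilde{\gamma_\real}(\alpha)$, hence also zero; and Lemma~\ref{lem:top_splitting_over_threefolds} then forces $\Esc(\real) \simeq \mathbb{P}^3(\real) \times \real^2$.

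To find $\alpha$, I would use the short exact sequence of strictly $\aone$-invariant sheaves $0 \to \Ibf^3 \to \KMW_2 \to \KM_2 \to 0$ to obtain the exact sequence
\[
\Hr^1(U,\KM_2) \xrightarrow{\partial} \Hr^2(U,\Ibf^3) \to \CHW^2(U) \to \CH^2(U),
\]
combined with Proposition~\ref{prop:colliotsch}: since $\mathbb{P}^3(\real)$ is compact and connected, this proposition provides a splitting $\Hr^2(U,\Ibf^3) \cong \Z/2 \oplus \Hr^2(\mathbb{P}^3(\real), \Z)$ whose second projection is the real cycle class map. A diagram chase patterned on the proof of Proposition~\ref{prop:corank_one_primary_obstruction} then identifies the kernel of $\CHW^2(U) \to \CH^2(U) \times \Hr^2(\mathbb{P}^3(\real), \Z)$ with the cokernel of the component of $\partial$ landing in the $\Z/2$ summand. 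To check that this cokernel is nonzero---equivalently, that hypothesis (a) of Proposition~\ref{prop:corank_one_primary_obstruction} fails for this $U$---I would compute $\Hr^1(U, \KM_2)$ through the localization sequence for $Q \hookrightarrow \mathbb{P}^3_{\real}$, using explicit formulas for $\Hr^\ast(\mathbb{P}^3_{\real}, \KM_\ast)$ in terms of Milnor $\Kr$-theory of $\real$ and the Milnor $\Kr$-cohomology of the pointless quadric surface $Q$. Tracking carefully through the signature morphism then confirms that $\partial$ avoids the $\Z/2$ summand in the splitting.

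Given such an $\alpha$, I would realize it as the Euler class of a rank-$2$ bundle with trivial determinant, either by invoking the classification of such bundles on a smooth affine threefold via the Chow--Witt Euler class (which follows from the obstruction theory of Section~\ref{sec:motivicsplitting}, since $\piaone_1(\mathrm{BSL}_2) = \KMW_2$ detects rank-$2$ bundles up to dimension $3$), or by an explicit Serre-type extension $0 \to \Osc_U \to \Esc \to \mathscr{I}_Z \to 0$ for a suitably chosen lci curve $Z \subset U$ whose oriented fundamental class in $\CHW^2(U)$ equals $\alpha$. The main obstacle is the delicate computation in the second paragraph: the splitting in Proposition~\ref{prop:colliotsch} is non-canonical and comes from the analysis of the sheaf $\overline{\mathbf{K}}$ of \cite{Lerbet24}, so verifying cleanly that $\partial$ truly misses the $\Z/2$ summand requires working directly in the Rost--Schmid complex with the specific geometry of $Q$---a different choice of real-pointless hypersurface would likely land inside the hypotheses of Proposition~\ref{prop:corank_one_primary_obstruction} and therefore yield no counterexample.
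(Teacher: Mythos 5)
Your overall strategy---take $U$ to be the complement of a hypersurface without real points, locate a nonzero class in the kernel of $\CHW^2(U)\to\CH^2(U)\times\Hr^2(\mathbb{P}^3(\real),\Z)$, and realize it as the Euler class of a rank~$2$ symplectic bundle---is the right one and is essentially the paper's. But your specific choice of hypersurface, the anisotropic quadric $Q=V(x_0^2+x_1^2+x_2^2+x_3^2)$, kills the construction at the first step. Indeed $Q\cong C\times C$ with $C$ the anisotropic conic, so $\CH^1(Q)=\mathrm{Pic}(Q)=\{(a,b)\in\Z^2: a\equiv b\ (\mathrm{mod}\ 2)\}$ has rank $2$ and is \emph{not} generated by the hyperplane class $i^*h=(1,1)$: it also contains $(2,0)$. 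The boundary $\partial\colon\CH^1(Q)\to\Hr^2(Q,\Ibf^2)=\Ch^2(Q)=\Z/2$ is the Steenrod square after reduction mod $2$ (as in Lemma~\ref{lem:Steenrod}); it is nonzero on $(1,1)$ but vanishes on $(2,0)$, since two general curves in the class $(2,0)$ (fibers of $\mathrm{pr}_1$ over distinct closed points of $C$) are disjoint. Hence the image of $\CHW^1(Q)\to\CH^1(Q)$ is $2\Z\oplus 2\Z$, and its pushforward to $\CHW^2(\mathbb{P}^3_\real)=\Z h^2$ (via $i_*(a,b)=(a+b)h^2$) is $2\Z h^2$---exactly the same as the image of all of $\CH^1(Q)$ in $\CH^2(\mathbb{P}^3_\real)$. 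The localization sequences then give $\CHW^2(U)\cong\Z/2\cong\CH^2(U)$ with the comparison map an isomorphism, so there is no nonzero class with vanishing second Chern class, and no bundle as in the theorem exists on your $U$. This is precisely why the paper takes a very general hypersurface of even degree $d=2n$ with $d\geq 4$ and $n$ odd (so $d\geq 6$): Noether--Lefschetz forces $\CH^1(X)=\Z\, i^*h$, and $n$ odd forces $i^*h^2\neq 0$ in $\Ch^2(X)$ (Lemma~\ref{lem:computation_steenrod_square}), which together yield $\CHW^2(U)\cong\Z/2d$ surjecting onto $\CH^2(U)\cong\Z/d$, with the nonzero kernel element $\overline{d}$ also dying in $\Hr^2(\mathbb{P}^3(\real),\Z)$. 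Degrees $2$ and $4$ genuinely fail.

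Two further points, even granting a correct choice of $U$. First, the step you defer---``tracking carefully through the signature morphism'' to show $\partial$ misses the $\Z/2$ summand of Proposition~\ref{prop:colliotsch}---is the entire content of the argument; the paper avoids this non-canonical splitting altogether and instead computes $\CHW^2(U)$ directly from the localization sequence and the Steenrod-square analysis on $X$. Second, in the realization step note that $\piaone_2(\mathrm{BSL}_2)=\KMW_2$, not $\piaone_1$; rather than invoking a classification theorem, the paper exhibits an explicit symplectic bundle on $\mathbb{P}^3_\real$ whose first Borel class generates $\CHW^2(\mathbb{P}^3_\real)$, uses additivity of $b_1$ on $\GWr^2$, and then uses that over an affine threefold every class in $\GWr^2(U)$ is represented by an actual rank~$2$ symplectic bundle.
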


The proof of this result will rest on some delicate computations of Chow-Witt groups of hypersurfaces in $\PP^3_{\real}$, that will be the object of the next subsection.

\subsubsection{Chow--Witt groups of complements of hypersurfaces in $\PP^3_{\real}$}

The following type of result is classical to real algebraic geometers but we provide a proof for the convenience of the reader.

\begin{prop}\label{prop:existencedegreed}
If $d\geqslant 4$ is an even integer, there exists a smooth hypersurface $X\subset\mathbb{P}_\real^3$ of degree $d$ such that $X(\real)$ is empty and $\CH^1(X)$ is generated (as a group) by the class of the hyperplane section.
\end{prop}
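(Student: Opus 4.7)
My plan is to combine the classical Noether--Lefschetz theorem with a Baire category argument on the real locus of the parameter space of degree $d$ hypersurfaces. Let $N=\binom{d+3}{3}-1$ and let $\mathcal{H}_d\subset\PP^N_\real$ be the open subscheme parametrizing smooth degree $d$ hypersurfaces in $\PP^3$. Inside $\mathcal{H}_d(\real)$, I will first locate a Euclidean-open subset $U$ of points $p$ with $X_p(\real)=\emptyset$. Nonemptiness is witnessed by the Fermat hypersurface $\sum_{i=0}^3 x_i^d=0$, which is smooth for $d\geqslant 2$ and, since $d$ is even, has no real points (the defining polynomial is strictly positive on $\real^4\smallsetminus\{0\}$). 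Openness follows from a compactness argument: if $f_p$ is nowhere zero on the unit sphere $S^3\subset\real^4$, it is bounded away from zero there, and this persists under small $C^0$ perturbation of the coefficients.

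Next, I will invoke the classical Noether--Lefschetz theorem (see, e.g., Voisin, \emph{Hodge Theory and Complex Algebraic Geometry}~II): for $d\geqslant 4$, the Noether--Lefschetz locus
\[
\mathrm{NL}=\bigl\{p\in\mathcal{H}_d(\cplx)\mid \Pic(X_{p,\cplx})\supsetneq\Z\cdot[\Osc(1)|_{X_p}]\bigr\}
\]
is a countable union $\bigcup_i Z_i$ of complex algebraic subvarieties of $\mathcal{H}_d(\cplx)$, each of complex codimension $\geqslant 1$. The key observation for my argument is that each real trace $Z_i\cap\PP^N(\real)$ is nowhere dense in $\PP^N(\real)$ for the Euclidean topology: if $Z_i$ is defined over $\real$, its real trace has real dimension equal to its complex dimension and hence real codimension $\geqslant 1$ in $\PP^N(\real)$; otherwise $Z_i\cap\PP^N(\real)\subseteq Z_i\cap\overline{Z_i}\cap\PP^N(\real)$, where $Z_i\cap\overline{Z_i}$ is a proper closed subvariety of $Z_i$ and hence of complex codimension $\geqslant 2$ in $\PP^N$. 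The Baire category theorem then provides a real point $p\in U$ lying outside every $Z_i$. The associated $X:=X_p$ is a smooth $\real$-hypersurface of degree $d$ with $X(\real)=\emptyset$ and $\Pic(X_\cplx)=\Z\cdot[H_\cplx]$.

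Finally, to descend the Picard group, I apply the low-degree five-term exact sequence for the Hochschild--Serre spectral sequence of the étale double cover $X_\cplx\to X$ with coefficients in $\Gm$. Since $X_\cplx$ is projective and connected, $\Hr^0(X_\cplx,\Gm)=\cplx^*$, and Hilbert~90 gives $\Hr^1(\mathrm{Gal}(\cplx/\real),\cplx^*)=0$. Moreover, $\mathrm{Gal}(\cplx/\real)$ acts trivially on $\Pic(X_\cplx)=\Z\cdot[H_\cplx]$ because the hyperplane class is defined over $\real$. The five-term sequence therefore yields an injection $\Pic(X)\hookrightarrow\Z\cdot[H_\cplx]$ under which $[H]\in\Pic(X)$ maps to the generator, so $\Pic(X)=\Z\cdot[H]$; and $\CH^1(X)=\Pic(X)$ by smoothness of $X$. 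The most delicate step of the argument is the real-codimension estimate for the Noether--Lefschetz components, but once that is settled, the remaining ingredients are standard.
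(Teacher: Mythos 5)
Your proof is correct and follows essentially the same route as the paper: Noether--Lefschetz over $\cplx$, nowhere-density of the real traces of the countably many components, and the Euclidean-open, nonempty positivity locus containing the Fermat hypersurface. The only (minor) divergence is the final descent of $\CH^1$, where you use Hochschild--Serre and Hilbert~90 to get $\Pic(X)\hookrightarrow\Pic(X_\cplx)^{\mathrm{Gal}}$, while the paper instead quotes the injectivity of $\CH^1(X)\to\CH^1(X_\cplx)$ from Karpenko; both arguments conclude identically from the fact that the image contains the hyperplane class.
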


\begin{proof}
Recall first the complex Noether--Lefschetz theorem. Let $V$ denote the space of homogeneous polynomials of degree $d$ in four variables with complex coefficients, namely $V=\mathbb{C}[x,y,z,t]_d$. Then the Noether--Lefschetz theorem states that the set $Z$ of $F\in V$ such that the complex hypersurface $X=\{F=0\}\subseteq\PP^3_{\mathbb{C}}$ has Picard group \emph{not} generated by $\mathscr{O}_X(1)$ is contained in a countable union of strict closed subvarieties of $V$. Up to adding to this countable union the discriminant variety parametrizing singular hypersurfaces, we may assume that any element of $V\setminus Z$ defines a smooth hypersurface. 

Let $G=\Z/2$, and endow $V=\mathbb{C}[x,y,z,t]_d$ with the conjugation action. The intersection of $Z$ with $V^G=\mathbb{R}[x,y,z,t]_d$ is also a countable union of strict real algebraic sets. In particular, its complement $V^G\setminus Z$, as a countable intersection of dense open subsets of $V^G$ for the Euclidean topology, is itself dense for the Euclidean topology. The set $U$ of $F\in V^G$ such that $F(x)>0$ for all $x\in\real^4\setminus 0$ is open for the Euclidean topology on $V^G$ according to \cite[Proposition 3.5]{Benoist18}. Assuming from now on that $d$ is even, $U$ is additionally non-empty since $x_0^d+x_1^d+x_2^d+x_3^d$ lies in $U$. It follows that $U$ meets $V^G\setminus Z$. In other words, there exists $F\in\mathbb{R}[x,y,z,t]_d$ such that: the complex hypersurface $S_F'=\{F=0\}\subseteq\PP^3_{\mathbb{C}}$ is smooth over $\mathbb{C}$, hence the real hypersurface $S_F=\{F=0\}\subseteq\PP^3_{\mathbb{R}}$ is smooth over $\mathbb{R}$; one has $F(x)>0$ for all $x\in\real^4\setminus 0$ so that $S_F(\real)$ is empty; and $\CH^1(S_F')$ is generated by the hyperplane section, which implies that $\CH^1(S_F)$ is also generated by the hyperplane section, for instance because $\CH^1(S_F)$ injects into $\CH^1(S_F')$ (e.g. \cite[Lemma 2.3]{Karpenko90}) and its image contains the hyperplane section. Then $X=S_F$ satisfies the conclusion of the proposition.
\end{proof}

We now fix a smooth hypersurface $i:X\hookrightarrow\PP_\real^3$ of degree $d$ defined by a polynomial $f$ such that $X(\real)=\emptyset$ and $\CH^1(X)$ is generated by the hyperplane section $i^*h=\OO_X(1)$.

\begin{lem}\label{lem:chow_groups_hypersurface}
We have $\CH^1(X)\simeq\Z$ and $\mathrm{Ch}^2(X)=\Z/2$ is generated by the class of any closed point.
\end{lem}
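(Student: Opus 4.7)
The first isomorphism is almost immediate from the hypothesis carried over from Proposition~\ref{prop:existencedegreed}: by construction $\CH^1(X)$ is generated by the hyperplane class $h = i^* \OO_{\PP^3_\real}(1)$, and its self-intersection $h^2 \in \CH^2(X)$ is the zero-cycle cut on $X$ by two generic hyperplanes, a cycle of degree $d > 0$. Thus $h$ is non-torsion and $\CH^1(X) \simeq \Z$, and this can be handled in one sentence.

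For the mod-$2$ statement the plan is to pin $\CH^2(X)/2$ between an upper and a lower bound. The upper bound will come from transfer along the étale Galois cover $p : X_\cplx \to X$ of degree two. Since $X(\real) = \emptyset$, every closed point $x \in X$ has residue field $\kappa(x) = \cplx$, so $x$ lifts to a closed point $\tilde x$ of $X_\cplx$ with $p_*[\tilde x] = [x]$; therefore $p_* : \CH^2(X_\cplx) \to \CH^2(X)$ is surjective on classes of closed points, hence surjective. Now $X(\cplx)$ is a smooth connected orientable closed real $4$-manifold, so by the Suslin--Voevodsky comparison (or more elementarily, by the degree map together with the divisibility of the Albanese kernel $A_0(X_\cplx)$, using that $X_\cplx$ is simply connected by Lefschetz so $\mathrm{Alb}(X_\cplx) = 0$) we have
\[
\CH^2(X_\cplx)/2 \;\simeq\; H^4_{\et}(X_\cplx,\Z/2) \;\simeq\; \Z/2.
\]
Passing $p_*$ to mod-$2$ quotients, $\CH^2(X)/2$ is a quotient of $\Z/2$, hence either trivial or $\Z/2$.

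For the lower bound the plan is to use the degree map $\deg : \CH^2(X) \to \Z$, i.e.\ pushforward to $\Spec\real$. Every closed point of $X$ has residue field $\cplx$, a degree-$2$ extension of $\real$, so the image of $\deg$ is contained in $2\Z$. If a closed point $x$ satisfied $[x] = 2y$ in $\CH^2(X)$ for some $y$, then $\deg(y) = 1$, contradicting $\deg(\CH^2(X)) \subset 2\Z$. Hence $[x]$ is not $2$-divisible in $\CH^2(X)$, i.e.\ $[x] \neq 0$ in $\CH^2(X)/2$, and combined with the upper bound this forces $\CH^2(X)/2 = \Z/2$ generated by the class of any closed point.

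The substantive step, and the one that could be the main obstacle to a clean write-up, is the appeal to $\CH^2(X_\cplx)/2 \simeq \Z/2$; this is standard but deserves an accurate citation (Bloch--Kato/Merkurjev--Suslin together with divisibility of $A_0$, or equivalently the classical étale cycle class map in top codimension for a smooth projective surface over an algebraically closed field). Once that input is in hand, the remainder of the argument is purely formal, with the hypothesis $X(\real) = \emptyset$ doing the work twice: once to ensure surjectivity of $p_*$ on closed points, and once to constrain the image of the degree map to $2\Z$.
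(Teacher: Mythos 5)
Your proof is correct, and for the substantive half of the lemma it takes a genuinely different route from the paper. For $\CH^1(X)\simeq\Z$ the two arguments are essentially the same one-liner (you compute the degree of $(i^*h)^2$ on $X$; the paper pushes forward and uses $i_*i^*h=dh^2$ in $\CH^2(\PP^3_\real)$ -- both just exhibit a non-torsion multiple). For $\mathrm{Ch}^2(X)=\Z/2$ the paper simply invokes \cite[Theorem 1.3 (b)]{Colliot96} for smooth proper real varieties with empty real locus, whereas you reprove that result in this special case: surjectivity of $p_*\colon\CH^2(X_\cplx)\to\CH^2(X)$ (every closed point has residue field $\cplx$, hence lifts), the classical fact that $\CH_0$ of a smooth projective connected surface over $\cplx$ is $\Z\oplus(\text{divisible})$ via the degree map, and the parity obstruction $\deg(\CH_0(X))\subseteq 2\Z$ to rule out $2$-divisibility of a point class. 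Your argument is self-contained modulo the divisibility of $A_0(X_\cplx)$ (which, to be precise, is the divisibility of the whole degree-zero part $\ker(\deg)$, not just of the Albanese kernel -- but that stronger statement is also classical, by reduction to Jacobians of curves, so your parenthetical justification is fine even though the terminology is slightly muddled), and it has the small advantage of making the ``generated by the class of \emph{any} closed point'' assertion explicit: each point class is nonzero by your degree argument, whereas the paper deduces this from the precise form of the Colliot-Th\'el\`ene--Scheiderer isomorphism. The cost is length; the citation is the intended shortcut here, and your transfer-plus-degree argument is in essence the standard proof of the cited theorem in the case at hand.
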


\begin{proof}
Since $X$ is proper with empty real locus, the equality $\mathrm{Ch}^2(X)=\Z/2$ follows from \cite[Theorem 1.3 (b)]{Colliot96}. Since $\mathrm{Ch}^2(X)$ is generated by the classes of closed points, this establishes the last assertion. To prove the first, since $\CH^1(X)$ is generated by $i^*h$, it suffices to prove that $i^*h$ is torsion free. Since $X$ is a hypersurface of degree $d$, the projection formula yields $i_*i^*h=dh^2$ in $\CH^2(\mathbb{P}^3)=\Z h^2$: in particular, $i^*h$ is torsion free as required.
\end{proof}

We now assume that $d=2n$ with $n$ \emph{odd}. Consider the square $h^2\in\CH^2(\mathbb{P}_\real^3)$ of the hyperplane section.

\begin{lem}\label{lem:computation_steenrod_square}
We have $i^*h^2\neq 0$ in $\mathrm{Ch}^2(X)=\Z/2$.
\end{lem}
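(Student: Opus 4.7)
The plan is to detect that $i^*h^2$ is nonzero mod~$2$ by pushing forward to $\mathbb{P}^3_\real$ and reading off an invariant mod~$4$.

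First I would compute $i_*(i^*h^2)$ by the projection formula: $i_*(i^*h^2)=h^2\cdot i_*(1)=h^2\cdot d\,h=d\,h^3=2n\,h^3\in\CH^3(\mathbb{P}_\real^3)\cong\Z$. So the class $i^*h^2$ of $\CH^2(X)$ pushes forward to the integer $2n$.

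Next I would exploit the hypothesis $X(\real)=\emptyset$. Since $X$ is a variety over $\real$, the residue field of any closed point of $X$ is a finite extension of $\real$, hence $\real$ or $\cplx$ by Artin--Schreier; the assumption $X(\real)=\emptyset$ forces every closed point to have residue field $\cplx$ and therefore degree~$2$ over $\real$. Consequently, any class in $\CH^2(X)$, being represented by a $\Z$-linear combination of such points, has even image under $i_*\colon\CH^2(X)\to\CH^3(\mathbb{P}^3_\real)=\Z$.

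The conclusion is then a short contradiction: if $i^*h^2=0\in\mathrm{Ch}^2(X)$, we could write $i^*h^2=2\gamma$ for some $\gamma\in\CH^2(X)$, yielding $2n=i_*(i^*h^2)=2\,i_*\gamma\in 4\Z$, which contradicts the oddness of $n$. Hence $i^*h^2\neq 0$ in $\mathrm{Ch}^2(X)=\Z/2$.

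The main obstacle is not the computation itself (everything is immediate from the projection formula and the Artin--Schreier dichotomy) but rather recognizing that a ``mod~$4$'' refinement of the degree is the correct invariant to detect the mod~$2$ nontriviality: the parity of closed-point degrees forces the image of $i_*$ to lie in $2\Z$, so the kernel of $\CH^2(X)\to\mathrm{Ch}^2(X)$ lies in $4\Z$ after pushforward, and the specific factorization $d=2n$ with $n$ odd is exactly what separates $i^*h^2$ from this kernel.
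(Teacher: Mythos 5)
Your proof is correct, but it runs along a genuinely different track from the paper's. The paper argues geometrically on $X$ itself: it represents $h^2$ by a real line $L\subset\PP^3_\real$, observes that $X(\real)=\emptyset$ forces $X\cap L$ to be a dimensionally proper (and Cohen--Macaulay) intersection, so $i^*h^2=[X\cap L]$, and then counts: the binary form $f|_L$ of degree $2n$ has no real roots, hence factors into $n$ quadratics, so $X\cap L$ consists of $n$ closed points, each of which is the generator of $\mathrm{Ch}^2(X)=\Z/2$; oddness of $n$ finishes. You instead detect nontriviality after pushing forward: $\deg i_*(i^*h^2)=2n$ by the projection formula, while the absence of real points forces every closed point of $X$ to have residue field $\cplx$, so $\deg\circ i_*$ maps $\CH^2(X)$ into $2\Z$ and hence $2\CH^2(X)$ into $4\Z$; since $2n\equiv 2\pmod 4$, the class $i^*h^2$ cannot be divisible by $2$. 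Both arguments exploit exactly the same two inputs ($\deg X=2n$ with $n$ odd, and $X(\real)=\emptyset$ forcing degree-$2$ residue fields), but yours has the advantage of bypassing the intersection-theoretic bookkeeping (choice of $L$, scheme structure and multiplicities of $X\cap L$) and of not needing the prior identification of $\mathrm{Ch}^2(X)$ with $\Z/2$ generated by a closed point, whereas the paper's version is more explicit and exhibits $i^*h^2$ concretely as $n$ times the class of a point, which is also what feeds the surjectivity of $\partial$ used in Corollary~\ref{cor:vanishsurfaces}.
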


\begin{proof}
Note that $h^2$ is the class of any projective line $L=\PP^1_\real$ in $\PP_\real^3$. Since $X$ has empty real locus, $X\cap L$ is strict, hence is a finite set of points. In particular, the codimension of $X\cap L$ in $X$ is equal to the codimension of $L$ in $\mathbb{P}^3_\real$, namely both are equal to $2$; moreover $L$ is regular and thus Cohen--Macaulay. Consequently, $i^*h^2$ is equal to the class of $X\cap L$ in $\mathrm{Ch}^2(X)$. In other words, it is the count mod $2$ of closed points in $X\cap L$. These points correspond to the irreducible factors of $f$ when seen as an element of $\real[t]$ using an isomorphism between $L$ and the projective line. Since $X$ has no real point, $f$ has $n$ factors of degree $2$, hence $X\cap L$ is composed of $n$ closed points. As we assumed that $n$ is odd, $[X\cap L]=i^*h^2\in\mathrm{Ch}^2(X)$ is non-zero.
\end{proof}

Now denote by $U$ the open complement of $X$ and consider the following diagram:
\[
\xymatrix{{\CHW}^1(X) \ar[r] \ar[d] & {\CHW}^2(\mathbb{P}_\real^3) \ar[r] \ar[d] & {\CHW}^2(U) \ar[r] \ar[d] & \Hr^2(X,\KMW_1) \\
\CH^1(X) \ar[r]\ar[d] & \CH^2(\mathbb{P}_\real^3) \ar[r]                       & \CH^2(U) & \\
\Hr^2(U,\Ibf^2) \ar[d]                &                                                        &  & & \\
\Hr^2(U,\KMW_1)}
\]
with exact lines and columns, and whose lines are localisation exact sequences (since $X$ is of even degree and $\mathbb{P}_\real^3$ is of odd dimension, the canonical sheaves of both are squares and hence there is no twist in the cohomology groups of $X$). 

\begin{lem}\label{lem:Steenrod}
The map $\widetilde{\CH}^1(X)\rightarrow\CH^1(X)=\Z i^*h$ has image $2\CH^1(X)$.
\end{lem}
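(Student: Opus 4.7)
The plan is to exploit the short exact sequence of strictly $\Abb^1$-invariant sheaves $0\to\Ibf^2\to\KMW_1\to\KM_1\to 0$, where $\Ibf^2$ is the kernel of the canonical projection $\KMW_1\to\KM_1$ (consistent with the Cartesian square $\KMW_1\simeq\KM_1\times_{\KM_1/2}\Ibf^1$). The induced long exact cohomology sequence will yield
\[
\CHW^1(X)\to\CH^1(X)\xrightarrow{\partial}\Hr^2(X,\Ibf^2),
\]
so the image of the forgetful map is $\ker(\partial)$. It will therefore suffice to show that $\Hr^2(X,\Ibf^2)$ is annihilated by $2$ (so that $2\cdot i^*h$ automatically lies in the kernel) and that $\partial(i^*h)\neq 0$; together these force the image to be exactly $2\CH^1(X)$.

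For the target $\Hr^2(X,\Ibf^2)$, the hypotheses $X(\real)=\emptyset$ and $\dim X=2$ combined with Jacobson's Theorem \ref{theo:jacobson} give $\Hr^p(X,\Ibf^n)=0$ for all $p$ and all $n\geq 3$; fed into the long exact sequence of $0\to\Ibf^3\to\Ibf^2\to\KM_2/2\to 0$, this produces an isomorphism $\Hr^2(X,\Ibf^2)\cong\Hr^2(X,\KM_2/2)=\mathrm{Ch}^2(X)=\Z/2$, the last identification coming from Lemma \ref{lem:chow_groups_hypersurface}. To compute $\partial$, one compares $0\to\Ibf^2\to\KMW_1\to\KM_1\to 0$ with $0\to\Ibf^2\to\Ibf^1\to\KM_1/2\to 0$ via the identity on the left, the canonical projection $\KMW_1\to\Ibf^1$ in the middle, and the reduction modulo $2$ on the right; this factors $\partial$ as $\CH^1(X)\to\mathrm{Ch}^1(X)\xrightarrow{\delta}\Hr^2(X,\Ibf^2)\cong\mathrm{Ch}^2(X)$, where $\delta$ is the connecting map of the second sequence. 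Under the Milnor isomorphism $\Ibf^n/\Ibf^{n+1}\cong\KM_n/2$, this composite $\delta$ is the motivic Steenrod square $\mathrm{Sq}^2$, by the standard identification of the Bockstein for the Arason filtration of the fundamental ideal sheaf with a Voevodsky Steenrod operation---whence the name of the lemma.

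Since $\mathrm{Sq}^2$ acts as squaring on classes of pure motivic weight one, one then obtains $\partial(i^*h)=(\overline{i^*h})^2\in\mathrm{Ch}^2(X)$, which is nonzero by Lemma \ref{lem:computation_steenrod_square}. Thus $i^*h\notin\ker(\partial)$, and together with the $2$-torsion of the target the image is exactly $2\CH^1(X)$. The main obstacle is the identification $\delta=\mathrm{Sq}^2$: this rests on comparison results between the fundamental ideal filtration on the Witt sheaf and the motivic Steenrod algebra; an alternative avoiding this appeal would be to compute $\delta(\overline{i^*h})$ directly at the level of Rost--Schmid cocycles, intersecting with a generic line $L\subset\PP^3_\real$ to recover the odd count of closed points established in Lemma \ref{lem:computation_steenrod_square}.
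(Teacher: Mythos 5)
Your proposal is correct and follows essentially the same route as the paper: both identify the image of $\CHW^1(X)\rightarrow\CH^1(X)$ with the kernel of the connecting map $\partial$ for $0\to\Ibf^2\to\KMW_1\to\KM_1\to 0$, use Jacobson's theorem (via $X(\real)=\emptyset$) to kill $\Ibf^3$ and identify $\Hr^2(X,\Ibf^2)\cong\Hr^2(X,\overline{\Ibf}^2)=\mathrm{Ch}^2(X)=\Z/2$, and then identify $\partial$ modulo this isomorphism with the Steenrod square (via Totaro), so that $\partial(i^*h)=i^*\overline{h}^2\neq 0$ by Lemma \ref{lem:computation_steenrod_square}. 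The only cosmetic difference is your factorization through the connecting map of $0\to\Ibf^2\to\Ibf^1\to\overline{\Ibf}^1\to 0$, which is exactly the diagonal map in the paper's diagram.
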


\begin{proof}
We have a commutative diagram:
\[
\xymatrix{{\CHW}^1(X) \ar[r] & \CH^1(X) \ar[r]^-{\partial}\ar[d]       & \Hr^2(X,\Ibf^2) \ar[r] \ar@{=}[d] & \Hr^2(X,\KMW_1) \\
                                & \Hr^1(X,\overline{\Ibf}) \ar[r] \ar[rd]_-{\mathrm{Sq}} & \Hr^2(X,\Ibf^2) \ar[d] & \\
                                 &                                                 & \Hr^2(X,\overline{\Ibf}^2) &}
\]
whose diagonal map is the first Steenrod square $\mathrm{Sq}$ (\cite[Theorem 1.1]{Totaro03}). The map $\Hr^2(X,\Ibf^2)\rightarrow\Hr^2(X,\overline{\Ibf}^2)$ is an isomorphism. Indeed, since $X$ has no real points, the sheaf $\Ibf^3$ itself vanishes on $X$ (for instance because $\Ibf^3(Y)=\Hr^0(Y(\real),\Z)=0$ for all $Y\subset X$ open by Jacobson's theorem). Modulo this isomorphism, we have $\partial (x)=\mathrm{Sq}(\overline{x})$ for any $x\in\CH^1(X)$ with reduction mod $2$ given by $\overline{x}$. Since $i^*h$ lies in codimension $1$, $\mathrm{Sq} i^*\overline{h}=i^*\overline{h}^2$ and is thus non-zero by Lemma \ref{lem:computation_steenrod_square}. In particular, $h$ does not lie in the image of the comparison map ${\CHW}^1(X)\rightarrow\CH^1(X)$. Since the cokernel of this comparison map is a subgroup of $\Hr^2(X,\Ibf^2)=\Hr^2(X,\overline{\Ibf}^2)$ which is $2-$torsion, and is thus $2$-torsion, the image of the comparison map contains $2\CH^1(X)$ and is consequently equal to $2\CH^1(X)$.
\end{proof}

The proof also supplies us with the following vanishing statement.

\begin{cor}\label{cor:vanishsurfaces}
The group $\Hr^2(X,\KMW_1)$ vanishes.
\end{cor}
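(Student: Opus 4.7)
The strategy is to extract the vanishing from the long exact sequence associated with the short exact sequence of sheaves
\[
0 \to \Ibf^2 \to \KMW_1 \to \KM_1 \to 0.
\]
Taking cohomology of $X$ yields a five-term piece
\[
\CH^1(X) \xrightarrow{\partial} \Hr^2(X,\Ibf^2) \to \Hr^2(X,\KMW_1) \to \Hr^2(X,\KM_1).
\]
I would first dispose of the right-hand term: since $X$ is a smooth surface, the Rost--Schmid complex of $\KM_1$ reduces in the relevant range to the complex of divisors, so $\Hr^2(X,\KM_1)=0$ (the degree $2$ piece would involve $(\KM_1)_{-2}=\KM_{-1}=0$). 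Consequently, it suffices to prove that the connecting map $\partial$ is surjective.

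To analyse $\partial$, I would reuse the diagram that appears in the proof of Lemma \ref{lem:Steenrod}. Since $X(\real)=\emptyset$, Jacobson's theorem forces $\Ibf^3=0$ on $X$, so $\Hr^2(X,\Ibf^2) \to \Hr^2(X,\overline{\Ibf}^2)$ is an isomorphism. Under this identification (and the identification $\overline{\Ibf}^n\cong\KM_n/2$), the connecting map $\partial$ factors through reduction mod $2$ followed by the first Bockstein/Steenrod square
\[
\mathrm{Sq}\colon \Hr^1(X,\overline{\Ibf}) \to \Hr^2(X,\overline{\Ibf}^2),
\]
by Totaro's identification \cite[Theorem 1.1]{Totaro03} already invoked above.

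Now by Lemma \ref{lem:chow_groups_hypersurface}, $\CH^1(X)=\Z\cdot i^*h$ and $\Ch^2(X)\cong\Z/2$ is generated by the class of any closed point. Hence $\Hr^2(X,\Ibf^2)\cong\Hr^2(X,\overline{\Ibf}^2)\cong\Z/2$, and it is enough to exhibit one element in the image of $\partial$ that is non-zero. Apply $\partial$ to the generator $i^*h$: its image is $\mathrm{Sq}(\overline{i^*h})=i^*\overline{h}^2$ in $\Ch^2(X)$, and this class is non-zero by Lemma \ref{lem:computation_steenrod_square}. Therefore $\partial$ is surjective, and the long exact sequence gives $\Hr^2(X,\KMW_1)=0$, which is the desired conclusion.

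The key point --- and essentially the only non-formal one --- is that all the computational inputs (the non-vanishing of $i^*\overline{h}^2$, the vanishing of $\Ibf^3$ on $X$, and the rank one description of $\CH^1(X)$) have already been established; the rest is a direct exploitation of the long exact sequence. No genuine obstacle remains beyond assembling these ingredients correctly.
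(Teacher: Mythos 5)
Your argument is correct and coincides with the paper's own proof: both use the exact sequence $\CH^1(X)\xrightarrow{\partial}\Hr^2(X,\Ibf^2)\rightarrow\Hr^2(X,\KMW_1)\rightarrow\Hr^2(X,\KM_1)$, kill the right-hand term via the Rost--Schmid complex of $\KM_1$, and deduce surjectivity of $\partial$ from $\partial i^*h=\mathrm{Sq}(i^*\overline{h})=i^*\overline{h}^2\neq 0$ together with the identification $\Hr^2(X,\Ibf^2)\cong\Hr^2(X,\overline{\Ibf}^2)$ coming from $\Ibf^3=0$ on $X$. Nothing is missing.
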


\begin{proof}
Indeed, $\partial i^*h=\mathrm{Sq} i^*h\neq 0$ modulo the identification explained in the previous proof, hence the homomorphism $\partial:\CH^1(X)\rightarrow\Hr^2(X,\Ibf^2)=\Z/2$ is surjective. Now we have an exact sequence 
\[
\CH^1(X)\xrightarrow{\partial}\Hr^2(X,\Ibf^2)\rightarrow\Hr^2(X,\KMW_1)\rightarrow\Hr^2(X,\KM_1).
\] 
Examination of the Rost complex of $\KM_1$ shows that $\Hr^2(X,\KM_1)=0$ and the surjectivity of $\partial$ implies that $\Hr^2(X,\KMW_1)=0$ as required. 
\end{proof}

We recall that the comparison morphism ${\CHW}^2(\mathbb{P}_\real^3)\rightarrow\CH^2(\mathbb{P}_\real^3)$ is an isomorphism \cite[Corollary 11.8]{Fasel09d}.

\begin{prop}
The pullback morphism ${\CHW}^2(\mathbb{P}^3_\real)\rightarrow{\CHW}^2(U)$ (respectively $\CH^2(\mathbb{P}^3_\real)\rightarrow\CH^2(U)$) induces an isomorphism ${\CHW}^2(U)\cong\Z/2d\Z$ (respectively $\CH^2(U)\cong\Z/d\Z$). 
\end{prop}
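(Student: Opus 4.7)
The strategy is to use the localization exact sequences from the diagram preceding the proposition, combined with the identifications already established.

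For the Chow group statement, the localization sequence together with the computation $\CH^1(X) = \Z\cdot i^*h$ from Lemma~\ref{lem:chow_groups_hypersurface} shows that $\CH^2(U)$ is the cokernel of $i_* : \CH^1(X) \to \CH^2(\PP^3_\real) = \Z h^2$. The projection formula yields $i_*(i^*h) = h \cdot i_*(1) = d h^2$, since $X$ has degree $d$. Hence this pushforward is multiplication by $d$ (modulo the identifications $\CH^1(X) = \Z$, $\CH^2(\PP^3_\real) = \Z$), giving $\CH^2(U) \cong \Z/d\Z$, generated by the image of $h^2$.

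For the Chow--Witt statement, Corollary~\ref{cor:vanishsurfaces} gives $\Hr^2(X,\KMW_1) = 0$, so the localization sequence identifies $\CHW^2(U)$ with the cokernel of the pushforward $i_* : \CHW^1(X) \to \CHW^2(\PP^3_\real)$. By \cite[Corollary 11.8]{Fasel09d} the forgetful map $\CHW^2(\PP^3_\real) \to \CH^2(\PP^3_\real) = \Z h^2$ is an isomorphism, so the map $i_*$ is determined by its composition with this forgetful map. Compatibility of pushforwards with the forgetful map then produces the commutative square
\[
\xymatrix{\CHW^1(X) \ar[r]^-{i_*} \ar[d] & \CHW^2(\PP^3_\real) \ar[d]^-{\cong} \\ \CH^1(X) \ar[r]_-{i_*} & \CH^2(\PP^3_\real).}
\]
By Lemma~\ref{lem:Steenrod}, the left vertical arrow has image $2\CH^1(X) = 2\Z \cdot i^*h$, and by the computation in the previous paragraph the bottom horizontal arrow sends $i^*h$ to $dh^2$. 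Combining these two facts, the image of $i_*$ in $\CHW^2(\PP^3_\real) \cong \Z h^2$ is exactly $2d\Z$, and therefore $\CHW^2(U) \cong \Z/2d\Z$.

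The delicate point is the reduction to the Chow computation via the forgetful map: this works precisely because $\CHW^2(\PP^3_\real)$ is torsion free (indeed equal to $\Z$), so injectivity of the forgetful map is automatic, and because the twist by $\omega$ is trivial for $\PP^3_\real$ and $X \subset \PP^3_\real$ (both canonical sheaves being squares, as recorded above). Once these identifications are in place, the argument is just a combination of Lemma~\ref{lem:Steenrod}, Corollary~\ref{cor:vanishsurfaces}, and the projection formula applied to $i$.
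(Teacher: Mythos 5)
Your argument is correct and follows essentially the same route as the paper: the localization sequences, the vanishing of $\Hr^2(X,\KMW_1)$ from Corollary \ref{cor:vanishsurfaces}, the commutative square comparing $i_*$ on Chow--Witt and Chow groups via the isomorphism $\CHW^2(\mathbb{P}^3_\real)\cong\CH^2(\mathbb{P}^3_\real)$, and the combination of Lemma \ref{lem:Steenrod} with the projection-formula computation $i_*i^*h=dh^2$ to identify the image of $i_*$ as $2d\Z$. Your remark about the absence of twists (both canonical sheaves being squares) matches the paper's own justification, so nothing is missing.
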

\begin{proof}
Consider the following commutative square:
\[
\xymatrix{{\CHW}^1(X) \ar[r] \ar[d] &{\CHW}^2(\mathbb{P}_\real^3)=\Z \ar@{=}[d] \\
\CH^1(X)=\Z \ar[r]          & \CH^2(\mathbb{P}_\real^3)=\Z.}
\]
The bottom horizontal morphism is multiplication by $d$: indeed, the source is generated by $i^*h$ which is sent to $dh\in\CH^2(\mathbb{P}_\real^3)$ by $i_*$ as noted in the proof of Lemma \ref{lem:chow_groups_hypersurface}. In particular, the localisation exact sequence for Chow groups yields $\CH^2(U)=\Z/d\Z$, generated by the pull-back of $h^2$. 

On the other hand, the left vertical map has image $2\Z$, thus the composite 
\[
{\CHW}^1(X)\rightarrow\CH^1(X)\rightarrow\CH^2(\mathbb{P}_\real^3)
\] 
has image $2d\Z$. Since the right vertical map is an isomorphism, we conclude that the map ${\CHW}^1(X)\rightarrow{\CHW}^2(\mathbb{P}_\real^3)$ has image $2d\Z$. The localisation exact sequence 
\[
{\CHW}^1(X)\rightarrow{\CHW}^2(\mathbb{P}_\real^3)\rightarrow{\CHW}^2(U)\rightarrow\Hr^2(X,\KMW_1)=0
\] 
then shows that the map ${\CHW}^2(\mathbb{P}_\real^3)\rightarrow{\CHW}^2(U)$ induces an isomorphism $\Z/2d\rightarrow{\CHW}^2(U)$. 
\end{proof}

We now have the following commutative diagram:
\[
\xymatrix{{\CHW}^2(U)=\Z/2d \ar[d] \ar[r] & \CH^2(U)=\Z/d \ar[d] \\
\Hr^2(U(\real),\Z)      \ar[r]              & \Hr^2(U(\real),\Z/2)}
\]
where $U(\real)=\mathbb{P}^3(\real)$ as $X(\real)=\emptyset$ so that $\Hr^2(U(\real),\Z)=\Z/2$. The class $\overline{d}\in\Z/2d$ obviously maps to $0$ under the top horizontal morphism; since $d$ is even, it also maps to $0$ under the left-hand vertical map.

To establish the first three assertions of Theorem \ref{thm:realthreefolds}, it suffices to construct a rank $2$-vector bundle on $U$ with trivial determinant and Euler class $\overline{d}\in\Z/2d$. This will be done in the following subsection. Before doing so, let us conclude this section with the following lemma which explains the defect of the hypotheses of Proposition \ref{prop:corank_one_primary_obstruction} to be satisfied.

\begin{lem}
The map 
\[
\overline{\gamma_\real}:\Hr^{1}(U,\KM_2)\rightarrow\Hr^1(U(\real),\Z/2)
\] 
is surjective.
\end{lem}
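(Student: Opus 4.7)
The plan is to reduce the statement to a direct computation on $\PP_\real^3$ itself, then exhibit an explicit preimage of the generator. Since $X(\real) = \emptyset$ by construction, the open immersion $U \hookrightarrow \PP_\real^3$ induces an equality $U(\real) = \PP^3(\real)$, so $\Hr^1(U(\real), \Z/2) = \Hr^1(\PP^3(\real), \Z/2) = \Z/2$, generated by $w_1(\Osc(1)(\real))$ (the first Stiefel--Whitney class of the real realization of $\Osc(1)$ on $\real\PP^3$). The morphism of sheaves $\overline{\mathrm{sign}} \colon \KM_2 \to \iota_*\Z/2$ restricts naturally to open subschemes, so $\overline{\gamma_\real}$ is compatible with the restriction map $\Hr^1(\PP_\real^3, \KM_2) \to \Hr^1(U, \KM_2)$; as the induced vertical arrow $\Hr^1(\PP^3(\real),\Z/2)\to\Hr^1(U(\real),\Z/2)$ is the identity, it suffices to exhibit a class on $\PP_\real^3$ mapping to the generator and then restrict it to $U$.

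For the computation on $\PP_\real^3$, I would apply the projective bundle formula for Milnor $\Kr$-cohomology, giving
\[
\Hr^1(\PP_\real^3, \KM_2) \cong \KM_1(\real) \cdot h = \real^* \cdot h,
\]
where $h \in \Hr^1(\PP_\real^3, \KM_1) = \Pic(\PP_\real^3)$ is the hyperplane class. Equivalently, the cup product
\[
\Hr^0(\PP_\real^3, \KM_1) \otimes \Hr^1(\PP_\real^3, \KM_1) \longrightarrow \Hr^1(\PP_\real^3, \KM_2)
\]
is surjective. The natural candidate is $\{-1\} \cup h$, with $\{-1\} \in \real^* = \Hr^0(\PP_\real^3, \KM_1)$.

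To compute the image of this class, I would use that $\overline{\mathrm{sign}} \colon \KM_* \to \iota_*\Z/2$ is a morphism of sheaves of graded rings: Milnor $\Kr$-theory is generated in degree one, where the map is $a \mapsto \mathrm{sign}(a) \bmod 2$, so multiplicativity in higher degrees is automatic. Consequently $\overline{\gamma_\real}$ is multiplicative for cup products, and sends $\{-1\} \cup h$ to $\overline{\mathrm{sign}}(-1) \cup w_1(\Osc(1)(\real)) = w_1(\Osc(1)(\real))$, which is the generator of $\Hr^1(\real\PP^3, \Z/2)$ (recall that $\Osc(1)(\real)$ is the dual of the tautological real line bundle on $\real\PP^3$). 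Pulling $\{-1\} \cup h$ back along $\Hr^1(\PP_\real^3,\KM_2)\to\Hr^1(U,\KM_2)$ yields the desired preimage.

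The only technical point, which I do not expect to be a genuine obstacle, is the verification that $\overline{\gamma_\real}$ is multiplicative on cohomology; this follows formally from $\overline{\mathrm{sign}}$ being a morphism of sheaves of rings and the naturality of the Yoneda product, and is already implicitly used in \cite{Hornbostel21}.
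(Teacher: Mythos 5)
Your proof is correct and follows essentially the same route as the paper: reduce to $\PP^3_\real$ via the restriction square (using $U(\real)=\PP^3(\real)$), then push the class $\{-1\}\cup h$ through the cycle class map — the paper packages the cup product with $\{-1\}$ as the sheaf morphism $\KM_1\to\KM_2$ rather than invoking multiplicativity on cohomology, but the computation is identical. The one step you should justify explicitly is that $\overline{\gamma_\real}(h)=w_1(\Osc(1)(\real))$, i.e.\ the compatibility of the degree-one cycle class map with $c_1\mapsto w_1$, which the paper obtains from Kahn's theorem.
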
 

\begin{proof}
The real cycle class map induces a commutative square:
\[
\xymatrix{\Hr^1(\mathbb{P}^3,\KM_2) \ar[r] \ar[d] & \Hr^1(U,\KM_2) \ar[d] \\
\Hr^1(\mathbb{P}^3(\real),\Z/2) \ar[r]          & \Hr^1(U(\real),\Z/2)}
\]
whose bottom horizontal arrow is an isomorphism. Thus it is sufficient to show that the homomorphism $\Hr^1(\mathbb{P}^3,\KM_2)\rightarrow\Hr^1(\mathbb{P}^3(\real),\Z/2)$ is surjective. Multiplication by the generator $\{-1\}$ of Milnor $\mathrm{K}$-theory induces a morphism $\KM_1\rightarrow\KM_2$ and a commutative triangle:
\[
\xymatrix{\Hr^1(\mathbb{P}^3,\KM_1) \ar[r] \ar[rd] & \Hr^1(\mathbb{P}^3,\KM_2) \ar[d] \\
                                                    & \Hr^1(\mathbb{P}^3(\real),\Z/2)}
\]
Thus it suffices to show that the map $\Hr^1(\mathbb{P}^3,\KM_1)=\CH^1(X)\rightarrow\Hr^1(\mathbb{P}^3(\real),\Z/2)$ is surjective. In fact, it sends $c_1(\mathscr{O}_{\mathbb{P}^3}(1))$ to $w_1(\mathscr{O}_{\mathbb{P}^3}(1)(\real))$ by \cite[Théorème 4]{Kahn87}. The line bundle $\mathscr{O}_{\mathbb{P}^3}(1)(\real)$ is the tautological line bundle on $\mathbb{P}^3(\real)$ and is thus non-trivial so that $w_1(\mathscr{O}_{\mathbb{P}^3}(1)(\real))\neq 0$. This concludes the proof since $\Hr^1(\mathbb{P}^3(\real),\Z/2)=\Z/2$ is generated by $w_1(\mathscr{O}_{\mathbb{P}^3}(1)(\real))$.
\end{proof}

Thus, Proposition \ref{prop:corank_one_primary_obstruction} cannot be applied precisely because the restriction of the homomorphism $\Hr^2(U,\Ibf^3)\rightarrow\widetilde{\CH}^2(U)$ to the $\Z/2$ factor in Proposition \ref{prop:colliotsch} that does not come from the real cycle class map is not zero. To the contrary, it is injective and in fact, since the image is $2$-torsion, this restriction maps the non-zero element of $\Z/2$ to $\overline{d}\in\Z/2d\Z$.

\begin{rem}
The present remark was prompted by discussions with J. Hornbostel. We have a localisation exact sequence \[\Hr^2(\mathbb{P}_\Rb^3,\Ibf^2)\rightarrow\Hr^2(U,\Ibf^2)\rightarrow\Hr^2(X,\Ibf)\] where the exact sequence \[\Hr^2(X,\KMW_1)\rightarrow\Hr^2(X,\Ibf)\rightarrow\Hr^3(X,2\KM_1)=0\] and Corollary \ref{cor:vanishsurfaces} show that $\Hr^2(X,\Ibf)=0$. Therefore $\Hr^2(U,\Ibf^2)$ is a quotient of $\Hr^2(\mathbb{P}_\Rb^3,\Ibf^2)$ and this last group is isomorphic to $\Hr^2(\mathbb{P}^3(\Rb),\Zb)=\Zb/2$ by \cite[Theorem 5.7]{Hornbostel21} since $\mathbb{P}_\Rb^3$ is a cellular variety. In particular, it is $2$-torsion: since $d$ is even, it follows $\overline{d}\in\Zb/2d\Zb$ is killed by the homomorphism $\widetilde{\CH}^2(U)\rightarrow\Hr^2(U,\Ibf^2)$. In the next subsection, we will construct a vector bundle $\Esc$ with $e(\Esc)=\overline{d}$. Its image in $\Hr^2(U,\Ibf^2)$ is the Euler class $e_\Ibf(\Esc)$ in $\Ibf$-cohomology of \cite{Fasel09d} (to see this, argue as in the proof of Theorem \ref{thm:corank_zero}) so $e_\Ibf(\Esc)=0$. At this time, we do not know of an example of a smooth threefold over $\Rb$ and of a rank $2$ vector bundle $\Esc$ on $U$ satisfying the conditions of Theorem \ref{thm:realthreefolds} such that $e_\Ibf(\Esc)\neq 0$. However, it does not seem reasonable to expect such pairs not to exist in view of the results obtained in the present paper.
\end{rem}

\subsubsection{Construction of vector bundles on $\mathbb{P}^3$}

In this section, we work with the affine variety $U\subset \PP^3_{\real}$ which is the complement of the hypersurface $X\subset \PP^3$ of degree $d=2n$ constructed in the previous section. For any $m\in\ZZ/2d\ZZ$, we exhibit a rank $2$ bundle $V(m)$ on $U$ whose Euler class is $m$, concluding the proof of the three first points of Theorem \ref{thm:realthreefolds}.  In the sequel, we denote by $x,y,z,t$ the coordinates on $\PP^3_{\real}$. Consider the following diagram
\[
\xymatrix@C=3em{0\ar[r] & \OO(-1)\ar[r]^-{\tiny{\begin{pmatrix} x \\ y \\ z \\ t\end{pmatrix}}}\ar@{-->}[d]_-f & \OO^{4}\ar[r]\ar[d] & \mathcal G\ar[r]\ar@{-->}[d]^-{-f^\vee} & 0 \\
0\ar[r] & \mathcal G^\vee\ar[r] & (\OO^{4})^\vee\ar[r]_-{\tiny{\begin{pmatrix} x & y & z & t\end{pmatrix}}} & \OO(1)\ar[r] & 0
}
\]
in which the second line is the dual of the first, and the middle vertical morphism is given by the standard symplectic form, given by the matrix
\[
{\begin{pmatrix} 0 & 1 & 0 & 0 \\ -1 & 0 & 0 & 0 \\ 0 & 0 & 0 & 1\\ 0 & 0 & -1 & 0\end{pmatrix}}.
\]
We obtain induced morphisms indicated by the dotted arrows, which are respectively injective and surjective (by the snake lemma).  We denote by $\mathcal{F}$ the kernel of $-f^\vee$, and we observe that the snake lemma yields a skew-symmetric isomorphism $\varphi:\mathcal{F}\to \mathcal{F}^\vee$. In particular, $\mathcal{F}$ is of trivial determinant (the trivialization being given by the symplectic form).

\begin{lem}\label{lem:EulerclassG}
The Euler class $e(\mathcal{F})$ is a generator of $\widetilde{\mathrm{CH}}^2(\PP_\Rb^3)$.
\end{lem}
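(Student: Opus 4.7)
The plan is to reduce the statement to a computation of the second Chern class $c_2(\mathcal{F}) \in \mathrm{CH}^2(\mathbb{P}^3_\mathbb{R})$ and then to invoke the cited isomorphism ${\CHW}^2(\mathbb{P}^3_\mathbb{R}) \xrightarrow{\sim} \mathrm{CH}^2(\mathbb{P}^3_\mathbb{R})$ from \cite[Corollary 11.8]{Fasel09d}, together with compatibility of the Euler class with the top Chern class under the comparison map ${\CHW}^2 \to \mathrm{CH}^2$.

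First, I would compute the total Chern class of the auxiliary sheaf $\mathcal{G}$ from the defining short exact sequence $0 \to \mathcal{O}(-1) \to \mathcal{O}^{\oplus 4} \to \mathcal{G} \to 0$. Working in $\mathrm{CH}^*(\mathbb{P}^3_\mathbb{R}) = \mathbb{Z}[h]/(h^4)$, this gives
\[
c(\mathcal{G}) \;=\; \frac{1}{1-h} \;=\; 1 + h + h^2 + h^3.
\]
Then from the snake-lemma sequence $0 \to \mathcal{F} \to \mathcal{G} \xrightarrow{-f^\vee} \mathcal{O}(1) \to 0$, the Whitney sum formula yields
\[
c(\mathcal{F}) \;=\; \frac{c(\mathcal{G})}{1+h} \;=\; (1 + h + h^2 + h^3)(1 - h + h^2 - h^3) \;=\; 1 + h^2 \pmod{h^4}.
\]
Thus $c_1(\mathcal{F}) = 0$ (which is consistent with the symplectic trivialization of $\det(\mathcal{F})$ supplied by $\varphi$) and $c_2(\mathcal{F}) = h^2$, a generator of $\mathrm{CH}^2(\mathbb{P}^3_\mathbb{R}) \cong \mathbb{Z}$.

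Second, since $\det(\mathcal{F})$ is trivial, the Euler class lives in the untwisted group $e(\mathcal{F}) \in {\CHW}^2(\mathbb{P}^3_\mathbb{R})$. The forgetful/comparison map ${\CHW}^2(\mathbb{P}^3_\mathbb{R}) \to \mathrm{CH}^2(\mathbb{P}^3_\mathbb{R})$ sends the Euler class to the top Chern class; this may be seen either via the zero-section description of both classes, or, since $(\mathcal{F},\varphi)$ is a rank $2$ symplectic bundle, by identifying $e(\mathcal{F})$ with the Borel class $b_1(\mathcal{F},\varphi)$ and applying the commutative square of Proposition \ref{prop:commutcharacteristic} (the back face). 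Combined with the fact that the comparison map is an isomorphism on $\mathbb{P}^3_\mathbb{R}$, the computation above shows that $e(\mathcal{F})$ is a generator of ${\CHW}^2(\mathbb{P}^3_\mathbb{R}) \cong \mathbb{Z}$.

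The main point to be pinned down is not computational but conceptual: ensuring that the Chow--Witt Euler class of the rank $2$ symplectic bundle $(\mathcal{F},\varphi)$ maps to $c_2(\mathcal{F})$ under the comparison map. This is classical (compare \cite[Remark 33]{Morel08}), and via the symplectic structure it is a direct instance of Panin--Walter's Borel class formalism recalled in Proposition \ref{prop:commutcharacteristic}; all other steps reduce to the Chern class computation above in $\mathbb{Z}[h]/(h^4)$.
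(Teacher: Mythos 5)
Your proposal is correct and follows essentially the same route as the paper: reduce to computing $c_2(\mathcal{F})$ via the isomorphism $\CHW^2(\PP^3_\Rb)\cong\CH^2(\PP^3_\Rb)$ of \cite[Corollary 11.8]{Fasel09d}, then apply the Whitney sum formula to the two exact sequences to get $c_2(\mathcal{F})=h^2$. The only difference is cosmetic: you spell out the justification that the Euler class maps to the top Chern class (via the zero-section description or the Borel class square), which the paper simply asserts.
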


\begin{proof}
The projection $\widetilde{\mathrm{CH}}^2(\PP_\Rb^3)\to \mathrm{CH}^2(\PP_\Rb^3)$ is an isomorphism (independently of the base field) by \cite[Corollary 11.8]{Fasel09d}. Under this map, the Euler class is sent to the second Chern class $c_2(\mathcal{F})$ and it suffices to compute this class. Using the Whitney formula, the exact sequence 
\[
0\to \OO(-1)\to \OO^{4}\to \mathcal G\to 0
\]
yields $c_1(\mathcal G)=c_1(\OO(1))=-h$, while $c_2(\mathcal G)=-c_1(\OO(1))c_1(\OO(-1))=h^2$. Using now the exact sequence
\[
0\to \mathcal F\to \mathcal G\to \OO(1)\to 0
\]
and the fact that $c_1(\mathcal F)=0$, we obtain $c_2(\mathcal G)=c_2(\mathcal F)=h^2$. The claim follows.
\end{proof}

Recall next that for a symplectic bundle of rank $2$, the Euler class coincides with its first Borel class $b_1$, as defined by Panin-Walter. Moreover, the first Borel class
\[
b_1\colon\mathrm{GW}^2(\PP^3)\to \widetilde{\mathrm{CH}}^2(\PP^3)
\]
is a group homomorphism. It follows that $b_1(m\cdot [\mathcal{G},\varphi])=m\in \widetilde{\mathrm{CH}}^2(\PP^3)$. 

\begin{rem}
We could also use Horrock's construction \cite{Horrocks67}, to obtain a rank $2$ bundle $E$ on $\mathbb{P}^3$ with $c_1(E)=0$ and $c_2(E)=mh^2$ for any $m\in \Z$.
\end{rem}

Now, we use the commutative diagram
\[
\xymatrix{\mathrm{GW}^2(\PP^3)\ar[r]^-{b_1}\ar[d] & \widetilde{\mathrm{CH}}^2(\PP^3)\ar[d] \\
\mathrm{GW}^2(U)\ar[r]_-{b_1} & \widetilde{\mathrm{CH}}^2(U)}
\]
and the fact that the right-hand vertical map is onto to obtain that the Borel class of the restriction $[\mathcal{G}_U,\varphi]$ of $[\mathcal{G},\varphi]$ to $U$ is a generator of $\widetilde{\mathrm{CH}}^2(U)$. To conclude, it suffices to prove that for any $m\in \N$ the class of $m[\mathcal{G}_U,\varphi]$ in $\mathrm{GW}^2(U)$ is represented by a rank $2$ symplectic bundle. This is a classical statement (e.g. \cite[Proposition 11]{Fasel09c}), using the fact that $U$ is affine.


\section{Secondary obstructions}\label{section:secondary_obstructions}

We now deal with the (motivic) secondary obstruction explained in Section \ref{sec:motivicsplitting}. Recall that the relevant homotopy sheaf is $\piaone_{d-1}(\A^{d-1}\smallsetminus \{0\})$ which sits in an exact sequence of strictly $\A^1$-invariant sheaves involving the degree map $\piaone_{d-1}(\A^{d-1}\smallsetminus\{0\})\to \mathbf{GW}_{d}^{d-1}$ for $d\geq 3$. If $X$ is a smooth affine $d$-fold over $\real$, this map induces for any line bundle $\Lc$ over $X$ a homomorphism
\[
\Hr^d(X,\piaone_{d-1}(\A^{d-1}\smallsetminus\{0\})(\Lc)) \longrightarrow \Hr^d(X,\mathbf{GW}^{d-1}_d(\Lc))
\]
which is onto by \cite[Theorem 4.4.5]{Asok14b}. We then refer to the right-hand side as the \emph{first part of the (secondary) obstruction}. A computation of the group $\Hr^d(X,\mathbf{GW}^{d-1}_d(\Lc))$ was obtained in \cite[Theorem 3.7.1]{Asok12c}: it participates in an exact sequence of the form
\begin{equation}\label{eqn:firstpartobstruction}
\Ch^{d-1}(X)\xrightarrow{\mathrm{Sq}^2_{\Lc}}\Ch^d(X)\longrightarrow \Hr^{d}(X,\mathbf{GW}^{d-1}_d(\Lc))\rightarrow 0,
\end{equation}
in which $\mathrm{Sq}^2_\Lc$ is the Steenrod square twisted by $\Lc$, that is, 
\[
\mathrm{Sq}^2_\Lc(-)=\mathrm{Sq}^2(-)+\overline{c_1}(\Lc)\cup(-)
\] 
where $\mathrm{Sq}^2$ is the Steenrod square defined by Voevodsky in \cite{Voevodsky03} and $\overline{c_1}(\Lc)$ is the reduction mod $2$ of the first Chern class of $\Lc$. The object of the next section is to compute the group $\Hr^{d}(X,\mathbf{GW}^{d-1}_d(\Lc))$ in terms of topological data.
\subsection{First part of the obstruction}

\subsubsection{A comparison with topology}
Recall that for any topological space $M$ (say having the homotopy type of a CW complex), that the connecting homomorphism in the long exact sequence in cohomology associated with the exact sequence
\[
0 \rightarrow\Z\xrightarrow{2}\Z\rightarrow\Z/2\rightarrow 0,
\]
is called the integral Bockstein homomorphism $\beta:\Hr^*(M,\Z/2)\rightarrow\Hr^{*+1}(M,\Z)$.  When composed with the reduction mod $2$ homomorphism, this yields a stable operation $\mathrm{Sq}^1:\Hr^*(M,\Z/2)\rightarrow\Hr^{*+1}(M,\Z/2)$ \cite[Chapter I]{steenrod-epstein-1962}, which is also frequently written $\beta$.

More generally, given a real line bundle $L$ on $M$, classified by a map $\xi: M \to \mathbb{RP}^{\infty}$ (up to homotopy), there is an induced orientation character $\pi_1(M) \to \Z/2$ and we may form homology with local coefficients twisted by this orientation character.  In this case, the exact sequence in cohomology with local coefficients attached to a twisted version of the exact sequence above yields a twisted (integral) Bockstein map $\beta_L:\Hr^*(M,\Z/2)\rightarrow\Hr^*(M,\Z(L))$.  As before, there is a corresponding reduction modulo $2$ map, and the composite of $\beta_L$ with the reduction modulo $2$ map, denoted $\mathrm{Sq}^1_L$, can be expressed in terms of the untwisted Bockstein and the first Stiefel--Whitney class of $L$ via the formula:
\[
\mathrm{Sq}^1_L = \mathrm{Sq}^1(-)+w_1(L)\cup(-);
\]
see, e.g., \cite[Theorem 2.3]{Greenblatt06} for this compatibility.  The next result relates the real realization of the twisted version of Voevodsky's Steenrod squaring operation and the twisted Bockstein we just described.



\begin{lem}
Let $X$ be a smooth real algebraic variety, let $\Lc$ a line bundle on $X$ and let $n>0$. Then the following square:
\[
\xymatrix{\Ch^{n-1}(X) \ar[r]^-{\mathrm{Sq}^2_\Lc} \ar[d]_-{\overline{\gamma_\real}} & \Ch^n(X) \ar[d]^-{\overline{\gamma_\real}} \\
\Hr^{n-1}(X(\real),\Z/2) \ar[r]_-{\mathrm{Sq}^1_L}       & \Hr^n(X(\real),\Z/2)}
\]
commutes, where $L=\Lc(\real)$.
\end{lem}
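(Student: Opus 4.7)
The plan is to decompose both operations using the defining formulas
\[
\mathrm{Sq}^2_\Lc(x) = \mathrm{Sq}^2(x) + \overline{c_1}(\Lc) \cup x, \qquad \mathrm{Sq}^1_L(y) = \mathrm{Sq}^1(y) + w_1(L) \cup y,
\]
thereby reducing the commutativity of the square to two independent compatibilities: (a)~the untwisted identity $\overline{\gamma_\real} \circ \mathrm{Sq}^2 = \mathrm{Sq}^1 \circ \overline{\gamma_\real}$, and (b)~the cup-product identity $\overline{\gamma_\real}(\overline{c_1}(\Lc) \cup x) = w_1(L) \cup \overline{\gamma_\real}(x)$ for every $x \in \Ch^{n-1}(X)$.

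For (b), I would first observe that the signature-mod-$2$ map $\overline{\mathrm{sign}}: \mathscr{H}^* \to \iota_* \Z/2$ is a morphism of sheaves of graded rings: stalkwise at a real point it is induced by the Galois restriction $\Hr^*_{\mathrm{Gal}}(F, \Z/2) \to \Hr^*_{\mathrm{Gal}}(\real, \Z/2) = \mathbb{F}_2[t]$ followed by projection onto the degree-$n$ component, each of which is multiplicative. Consequently $\overline{\gamma_\real}: \Ch^*(X) \to \Hr^*(X(\real), \Z/2)$ is a ring homomorphism, and the identity reduces to $\overline{\gamma_\real}(\overline{c_1}(\Lc)) = w_1(L)$. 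This is precisely Kahn's theorem \cite[Théorème 4]{Kahn87}, already invoked in the paper when comparing $c_d(\Esc)$ with $w_d(\Esc(\real))$.

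For (a), I would appeal to the compatibility of motivic and topological Steenrod squares under real realization, namely that motivic $\mathrm{Sq}^{2i}$ corresponds to topological $\mathrm{Sq}^i$. Applied with $i=1$, this gives (a) once one identifies the restriction of the general real realization of mod $2$ motivic cohomology to the slices $\Hr^{2n,n}(X, \Z/2) = \Ch^n(X)$ with the concrete map $\overline{\gamma_\real}$ built in the paper from the Colliot-Thélène--Parimala signature. The identification proceeds through Bloch-Kato (giving $\mathscr{H}^n \simeq \KM_n/2$, using that over a real base $\mu_2^{\otimes n}$ and $\Z/2$ coincide as Galois modules) together with the observation that evaluating Galois cohomology at a real point agrees with taking the stalk of $\iota_*$ of the associated topological sheaf, both at the level of ring structure and of Bockstein-type operations.

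The main obstacle is this last identification together with the underlying compatibility $\rho_\real(\mathrm{Sq}^2_{\mathrm{mot}}) = \mathrm{Sq}^1_{\mathrm{top}}$. One can verify the latter on universal classes: both sides are determined via the splitting principle and the Cartan formula by their value on the first motivic Chern class of the tautological bundle on $\mathrm{B}\gm{}$, whose real realization is $w_1$ of the tautological real line bundle on $\real\mathbb{P}^\infty$; since $\mathrm{Sq}^2_{\mathrm{mot}}(c_1) = c_1^2$ (as $c_1 \in \Hr^{2,1}$) and $\mathrm{Sq}^1_{\mathrm{top}}(w_1) = w_1^2$ (as $w_1 \in \Hr^1$), the compatibility on this universal class follows directly, and its general case then follows from the multiplicativity of $\overline{\gamma_\real}$ already established in step (b).
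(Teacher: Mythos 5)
Your reduction of the square to the two identities $\overline{\gamma_\real}\circ\mathrm{Sq}^2=\mathrm{Sq}^1\circ\overline{\gamma_\real}$ and $\overline{\gamma_\real}(\overline{c_1}(\Lc))=w_1(L)$, together with the multiplicativity of $\overline{\gamma_\real}$, is exactly the paper's first step, and your treatment of the cup-product identity via Kahn's theorem is fine. The problem is your proof of the untwisted identity (a).

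Your argument for (a) is that both $\mathrm{Sq}^2$ and $\mathrm{Sq}^1$ are ``determined via the splitting principle and the Cartan formula by their value on the first Chern class of the tautological bundle on $\mathrm{B}\gm{}$.'' This is not true: a cohomology operation $\Ch^{n-1}(-)\to\Ch^{n}(-)$ is determined by its value on the fundamental class of the relevant (motivic) Eilenberg--Mac\,Lane space, not on $c_1$ of a line bundle. The splitting principle computes characteristic classes of bundles; it does not reduce an arbitrary class in $\Ch^{n-1}(X)$ to a polynomial in first Chern classes of line bundles, and for $n-1\geq 2$ most classes are not of that form. So checking $\mathrm{Sq}^2(c_1)=c_1^2$ and $\mathrm{Sq}^1(w_1)=w_1^2$ and invoking multiplicativity only verifies the identity on the subring generated by divisor classes. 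Worse, the route through ``real realization of motivic cohomology operations'' runs into precisely the difficulty the paper flags in Section \ref{sec:realrealization}: real realization does not commute with homotopy limits, and the real realization of Voevodsky's motivic Eilenberg--Mac\,Lane spaces is \emph{not} the corresponding topological Eilenberg--Mac\,Lane space (see \cite[\S 7.3]{ABEH}), so the compatibility $\rho_\real(\mathrm{Sq}^2_{\mathrm{mot}})=\mathrm{Sq}^1_{\mathrm{top}}$ is not formal. Finally, your last identification --- that the stalkwise Galois-restriction description of $\overline{\gamma_\real}$ is compatible ``at the level of \ldots Bockstein-type operations'' --- is asserted, but that compatibility is exactly the content of (a), so the argument is circular at the key point.

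The paper sidesteps all of this by using a purely algebraic description of Voevodsky's $\mathrm{Sq}^2$ on $\Ch^{n-1}(X)=\Hr^{n-1}(X,\overline{\Ibf}^{n-1})$ due to Totaro (in the form \cite[Corollary 4.1.3]{Asok13c}): it is the composite of the connecting homomorphism $\partial$ of the short exact sequence $0\to\Ibf^n\to\Ibf^{n-1}\to\overline{\Ibf}^{n-1}\to 0$ with the projection $\Hr^n(X,\Ibf^n)\to\Hr^n(X,\overline{\Ibf}^n)$. Since the signature maps assemble into a morphism from this exact sequence of sheaves to $0\to\iota_*\Z(L)\xrightarrow{\cdot 2}\iota_*\Z(L)\to\iota_*\Z/2\to 0$ (Diagram \eqref{diag:morphism_exact_sequence_real}), the identity (a) becomes an instance of the naturality of connecting homomorphisms, with the topological $\mathrm{Sq}^1$ appearing as the integral Bockstein followed by reduction mod $2$. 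If you want to salvage your approach, you would need either this $\Ibf$-theoretic description of $\mathrm{Sq}^2$ or a genuine computation of the effect of real realization on the universal class in the motivic Eilenberg--Mac\,Lane space; the line-bundle computation does not suffice.
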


\begin{proof}
Since the real cycle class map is compatible with cup-products, via the compatibilities described above for the twisted squaring operations, it suffices to establish that $\overline{\gamma_\real}\circ\mathrm{Sq}^2=\mathrm{Sq}^1\circ\overline{\gamma_\real}$ and $\overline{\gamma_\real}\overline{c_1}(\Lc)=w_1(L)$, where $\overline{c_1}(\Lc)$ is the reduction modulo two of the first Chern class. The second statement can be found in \cite[Théorème 4]{Kahn87}. Let us verify the first statement. Recall \cite[Corollary 4.1.3]{Asok13c} that Voevodsky's Steenrod square can be described via the following composition:
\[
\xymatrix{\Ch^{n-1}(X)=\Hr^{n-1}(X,\overline{\Ibf}^{n-1}) \ar[r]^-{\partial} & \Hr^n(X,\Ibf^n) \ar[d] \\
                                                                    & \Hr^n(X,\overline{\Ibf}^n)=\Ch^n(X)}
\]
where $\partial$ is the connecting homomorphism induced by the exact sequence 
\[0
\rightarrow\Ibf^n\rightarrow\Ibf^{n-1}\rightarrow\overline{\Ibf}^{n-1}\rightarrow 0
\] 
and the vertical map is induced by the quotient morphism $\Ibf^n\rightarrow\overline{\Ibf}^n$. Consider now the following diagram:
\[
\xymatrix{\Hr^{n-1}(X,\overline{\Ibf}^{n-1}) \ar[r]^-{\partial} \ar[d]_-{\overline{\gamma_\real}} & \Hr^n(X,\Ibf^n) \ar[r] \ar[d]^-{\gamma_\real} & \Hr^n(X,\overline{\Ibf}^n) \ar[d]^-{\overline{\gamma_\real}} \\
\Hr^{n-1}(X(\real),\Z/2)  \ar[r]_-\beta     & \Hr^n(X(\real),\Z) \ar[r]        & \Hr^n(X(\real),\Z/2)  }
\]
where the composition of the maps in the first (respectively second) row is the algebraic (respectively topological) Steenrod square. The two inner squares commute because they are obtained using the morphisms of exact sequences of Diagram \ref{diag:morphism_exact_sequence_real} for the integers $n-1$ and $n$. Thus the outer square commutes as required.
\end{proof}

\begin{cor}\label{cor:comparison_secondary_obstruction_group_first_part}
Assume that $X$ is not proper or that $X(\real)$ is not empty. Denote by $d$ the dimension of $X$; let $\Lc$ be a line bundle on $X$ with real realization $L=\Lc(\real)$. Then the real cycle class map $\overline{\gamma_\real}:\Ch^d(X)\rightarrow\Hr^d(X(\real),\Z/2)$ induces an isomorphism 
\[
\Hr^d(X,\mathbf{GW}_d^{d-1}(\Lc))\rightarrow\Hr^d(X(\real),\Z/2)/\mathrm{Sq}^1_L\overline{\gamma_\real}(\Ch^{d-1}(X))
\] 
on cokernels and thus a surjective morphism $\Hr^d(X,\mathbf{GW}_d^{d-1}(\Lc))\rightarrow\mathrm{Coker}(\mathrm{Sq}^1_L)$.
\end{cor}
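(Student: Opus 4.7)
The plan is to read the corollary off the cokernel presentation \eqref{eqn:firstpartobstruction} together with the commutative square from the preceding lemma. Starting from the exact sequence
\[
\Ch^{d-1}(X)\xrightarrow{\mathrm{Sq}^2_\Lc}\Ch^d(X)\rightarrow\Hr^d(X,\mathbf{GW}^{d-1}_d(\Lc))\rightarrow 0,
\]
the identity $\overline{\gamma_\real}\circ\mathrm{Sq}^2_\Lc=\mathrm{Sq}^1_L\circ\overline{\gamma_\real}$ shows that $\overline{\gamma_\real}$ sends $\mathrm{Sq}^2_\Lc(\Ch^{d-1}(X))$ onto $\mathrm{Sq}^1_L\overline{\gamma_\real}(\Ch^{d-1}(X))$, so the top-degree real cycle class map descends to the required homomorphism of cokernels $\Hr^d(X,\mathbf{GW}^{d-1}_d(\Lc))\to\Hr^d(X(\real),\Z/2)/\mathrm{Sq}^1_L\overline{\gamma_\real}(\Ch^{d-1}(X))$.

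Surjectivity of this induced map is equivalent to surjectivity of $\overline{\gamma_\real}\colon\Ch^d(X)\to\Hr^d(X(\real),\Z/2)$, which is where the dichotomy in the hypotheses enters. If $X(\real)=\emptyset$, the target is trivially zero. Otherwise, the non-properness of $X$ allows one to invoke a Colliot-Th\'el\`ene--Parimala/Scheiderer-type surjectivity result for the top-codimension mod $2$ cycle class map (in the spirit of Proposition~\ref{prop:mod_2_cycle_surjective}): non-properness forces the relevant \'etale obstruction group $\Hr^{d+1}_\et(X,\Z/2)$ to vanish, making the real-\'etale comparison an epimorphism in degree $d$.

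For injectivity, an element $\overline{x}\in\Hr^d(X,\mathbf{GW}^{d-1}_d(\Lc))$ with vanishing image satisfies $\overline{\gamma_\real}(x)=\mathrm{Sq}^1_L\overline{\gamma_\real}(z)=\overline{\gamma_\real}(\mathrm{Sq}^2_\Lc(z))$ for some $z$, so that $x-\mathrm{Sq}^2_\Lc(z)$ lies in $\ker\overline{\gamma_\real}$. Injectivity is therefore equivalent to the containment $\ker(\overline{\gamma_\real}\colon\Ch^d(X)\to\Hr^d(X(\real),\Z/2))\subseteq\mathrm{Sq}^2_\Lc(\Ch^{d-1}(X))$. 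This identification rests on the Cox--Scheiderer exact sequence, which describes $\ker\overline{\gamma_\real}$ in top degree as the image of cup product with $(-1)$ from $\Ch^{d-1}(X)$; this operation coincides with $\mathrm{Sq}^2_\Lc$ modulo a twist by $\overline{c_1}(\Lc)$, which is itself absorbed into $\mathrm{Sq}^2_\Lc(\Ch^{d-1}(X))$.

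The main obstacle is this last identification of $\ker\overline{\gamma_\real}$, since it requires fine control via Scheiderer's real-\'etale machinery; the rest is essentially a diagram chase combined with the surjectivity input. Once the isomorphism is secured, the stated surjection $\Hr^d(X,\mathbf{GW}_d^{d-1}(\Lc))\twoheadrightarrow\mathrm{Coker}(\mathrm{Sq}^1_L)$ follows by composing with the canonical quotient $\Hr^d(X(\real),\Z/2)/\mathrm{Sq}^1_L\overline{\gamma_\real}(\Ch^{d-1}(X))\twoheadrightarrow\mathrm{Coker}(\mathrm{Sq}^1_L)$.
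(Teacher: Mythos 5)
Your construction of the descended map and the reduction of surjectivity/injectivity to properties of $\overline{\gamma_\real}\colon\Ch^d(X)\to\Hr^d(X(\real),\Z/2)$ match the paper's setup, but your injectivity step has a genuine gap. The paper's proof is a one-line snake lemma argument resting on a single input: under the hypothesis that $X$ is not proper or $X(\real)\neq\emptyset$, the top-degree Borel--Haefliger map $\overline{\gamma_\real}\colon\Ch^d(X)\to\Hr^d(X(\real),\Z/2)$ is an \emph{isomorphism} (Colliot-Th\'el\`ene--Scheiderer, Theorem 1.3(b) of \cite{Colliot96} --- the same reference used for Theorem \ref{thm:nocompactcomponent}). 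With the middle vertical arrow an isomorphism and the left vertical arrow $\mathrm{Im}(\mathrm{Sq}^2_\Lc)\to\mathrm{Sq}^1_L\overline{\gamma_\real}(\Ch^{d-1}(X))$ surjective (which follows, as you note, from $\overline{\gamma_\real}\circ\mathrm{Sq}^2_\Lc=\mathrm{Sq}^1_L\circ\overline{\gamma_\real}$), the snake lemma immediately gives that the induced map on cokernels is bijective. In particular $\ker\overline{\gamma_\real}=0$, so the containment $\ker\overline{\gamma_\real}\subseteq\mathrm{Sq}^2_\Lc(\Ch^{d-1}(X))$ that you need holds vacuously.

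You instead treat $\ker\overline{\gamma_\real}$ as potentially nonzero and try to show it lies in $\mathrm{Sq}^2_\Lc(\Ch^{d-1}(X))$ by (i) describing it as the image of cup product with $(-1)$ out of $\Ch^{d-1}(X)$ and (ii) identifying that operation with $\mathrm{Sq}^2_\Lc$ up to the $\overline{c_1}(\Lc)$ twist. Neither claim is established: the real-\'etale exact sequences of Scheiderer type involve cup product with $(-1)$ raising \'etale cohomological degree on the sheaves $\mathscr{H}^t$, not a map $\Ch^{d-1}(X)\to\Ch^d(X)$, and the asserted coincidence of that operation with Voevodsky's $\mathrm{Sq}^2$ is not proved in the paper and is not true as stated. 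You flag this identification as ``the main obstacle,'' which is exactly right --- it is the missing step, and it is unnecessary once you invoke the injectivity half of the Colliot-Th\'el\`ene--Scheiderer theorem. (Your surjectivity argument is fine in substance, since surjectivity of the cokernel map only needs surjectivity of the middle arrow, but it too should simply cite the isomorphism statement rather than re-derive an epimorphism from vanishing of $\Hr^{d+1}_{\et}(X,\Z/2)$.)
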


\begin{proof}
The previous lemma provides a commutative diagram with exact rows:
\[
\xymatrix{0 \ar[r] & \mathrm{Im}(\mathrm{Sq}^2_\Lc) \ar[r] \ar[d] & \Ch^d(X) \ar[r] \ar[d]^-{\overline{\gamma_\real}}& \Hr^d(X,\mathbf{GW}_d^{d-1}(\Lc)) \ar[r] \ar@{-->}[d] & 0 \\
0 \ar[r] & \mathrm{Sq}^1_L\overline{\gamma_\real}(\Ch^{d-1}(X)) \ar[r]          & \Hr^d(X(\real),\Z/2) \ar[r]     & \Hr^d(X(\real),\Z/2)/\mathrm{Sq}^1_L\overline{\gamma_\real}(\Ch^{d-1}(X)) \ar[r] & 0}
\]
This shows that $\overline{\gamma_\real}:\Ch^d(X)\rightarrow\Hr^d(X(\real),\Z/2)$ does induce a map as in the lemma---this is the dotted arrow in the diagram. The map $\overline{\gamma_\real}$ is an isomorphism by \cite[Theorem 1.3 (b)]{Colliot96}; the snake lemma shows that the map on cokernels is surjective and that its kernel is isomorphic to the cokernel of the map $\overline{\gamma_\real}:\mathrm{Im}(\mathrm{Sq}^2_\Lc)\rightarrow\mathrm{Sq}^1_L\overline{\gamma_\real}(\Ch^{d-1}(X))$ which is zero since $\overline{\gamma_\real}\circ\mathrm{Sq}^2_\Lc=\mathrm{Sq}^1_L\circ\overline{\gamma_\real}$.
\end{proof}

We shall see in Subsection \ref{subsubsection:explicit_example_non-injective} an explicit example showing that the surjection $\Hr^d(X,\GW_d^{d-1}(\Lc))\rightarrow\mathrm{Coker}(\mathrm{Sq}_L)$ is not injective in general. Let us however explain a positive result first.

\begin{prop}
Let $M$ be a smooth manifold of dimension $d$ with orientation bundle $\omega_M$ and let $L$ be a line bundle on $M$. If $L_{|C}\simeq(\omega_M)_{|C}$ for any compact connected component $C$ of $M$, then 
\[
\mathrm{Sq}^1_L:\Hr^{d-1}(M,\Z/2)\rightarrow\Hr^d(M,\Z/2)
\] 
vanishes.
\end{prop}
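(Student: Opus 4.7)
The plan is to identify $\mathrm{Sq}^1_L$ as the composition of reduction modulo $2$ with the twisted integral Bockstein $\beta_L : \Hr^{d-1}(M,\Zb/2) \to \Hr^d(M,\Zb(L))$ coming from the short exact sequence $0\to\Zb(L)\xrightarrow{\cdot 2}\Zb(L)\to\Zb/2\to 0$ of local systems, exactly as in the discussion preceding the proposition. From the associated long exact sequence, the image of $\beta_L$ is precisely the $2$-torsion subgroup ${}_2\Hr^d(M,\Zb(L))$. Consequently, the image of $\mathrm{Sq}^1_L$ is contained in the reduction modulo $2$ of ${}_2\Hr^d(M,\Zb(L))$, and it suffices to prove that this $2$-torsion subgroup vanishes.

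Since singular cohomology of $M$ splits as a product indexed by connected components, I may argue component-by-component and it suffices to verify that ${}_2\Hr^d(C,\Zb(L_{|C})) = 0$ for every connected component $C$ of $M$. For a non-compact connected $d$-manifold $C$, one has $\Hr^d(C,\Fsc) = 0$ for any local coefficient system $\Fsc$ on $C$ (this is the standard vanishing of top singular cohomology for non-compact manifolds without boundary, which follows from Poincaré duality and the vanishing of $\Hr_0^{\mathrm{BM}}$... wait let me be careful: it follows from the fact that a non-compact connected $d$-manifold has the homotopy type of a CW complex of dimension $\le d-1$, or alternatively from Poincaré--Lefschetz duality $\Hr^d(C,\Fsc) \cong \Hr_0^{\mathrm{BM}}(C,\Fsc\otimes\omega_C) = 0$ in the non-compact case). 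Either way, the contribution from non-compact components is trivial.

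For a compact connected component $C$, Poincaré duality with local coefficients yields an isomorphism
\[
\Hr^d(C,\Zb(L_{|C})) \cong \Hr_0(C, \Zb(L_{|C}\otimes \omega_C)),
\]
where $\omega_C = (\omega_M)_{|C}$. The hypothesis gives $L_{|C}\simeq \omega_C$, and since any real line bundle $\ell$ satisfies $\ell\otimes\ell\simeq \mathscr{O}$, it follows that $L_{|C}\otimes\omega_C$ is trivial as a local system. Therefore $\Hr^d(C,\Zb(L_{|C})) \cong \Hr_0(C,\Zb) = \Zb$, which is torsion-free. Combining the two cases proves that ${}_2\Hr^d(M,\Zb(L))=0$, hence $\mathrm{Sq}^1_L = 0$.

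The argument is essentially formal once the twisted Bockstein description of $\mathrm{Sq}^1_L$ is in hand; the only real input is the computation of $\Hr^d(C,\Zb(L_{|C}))$ via Poincaré duality, which is where the hypothesis $L_{|C}\simeq\omega_C$ enters decisively. The main point to get right is the identification of the orientation twist: on a compact component where $L_{|C}\not\simeq\omega_C$ the group $\Hr^d(C,\Zb(L_{|C}))$ would be $\Zb/2$ and the argument would fail, which is consistent with the statement only ruling out such components.
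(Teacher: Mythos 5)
Your proof is correct and follows essentially the same route as the paper's: factor $\mathrm{Sq}^1_L$ through the twisted Bockstein into $\Hr^d(M,\Z(L))$, then use twisted Poincar\'e duality together with the hypothesis $L_{|C}\simeq(\omega_M)_{|C}$ to see that this group is a free abelian group on the compact components, hence torsion-free, killing the ($2$-torsion) image of the Bockstein. The paper states this in three lines; you have merely spelled out the component-by-component computation.
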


\begin{proof}
Indeed, $\mathrm{Sq}^1_L$ factors through the group $\Hr^d(M,\Z(L))$ as previously observed. The group $\Hr^d(M,\Z(L))$ is isomorphic to the free abelian group on compact connected components of $M$ by twisted Poincaré duality and is thus torsion free. Consequently, $\mathrm{Sq}^1_L$ vanishes as required. 
\end{proof}

\begin{cor}
Let $X$ be a smooth real algebraic variety and let $\Lc$ be a line bundle on $X$ with real realization $\Lc(\real)=L$. Assume that $L_{|C}\simeq\omega_{X/\real}(\real)_{|C}$ for any compact connected component $C$ of $X(\real)$. Then the map $\Hr^d(X,\GW_d^{d-1}(\Lc))\rightarrow\mathrm{Coker}(\mathrm{Sq}^1_L)$ is an isomorphism.
\end{cor}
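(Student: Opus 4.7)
The argument is a direct combination of the preceding proposition and Corollary~\ref{cor:comparison_secondary_obstruction_group_first_part}, with no substantial new input required. The key observation is that the hypothesis $L_{|C}\simeq\omega_{X/\real}(\real)_{|C}$ on every compact connected component $C$ of $X(\real)$ is exactly the hypothesis of the manifold-level proposition above, so the twisted integral Bockstein $\beta_L:\Hr^{d-1}(X(\real),\Z/2)\to \Hr^d(X(\real),\Z(L))$ has torsion-free target and the composite $\mathrm{Sq}^1_L:\Hr^{d-1}(X(\real),\Z/2)\to \Hr^d(X(\real),\Z/2)$ vanishes identically.

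Granted this vanishing, I would simply read off the conclusion from the exact sequence of Corollary~\ref{cor:comparison_secondary_obstruction_group_first_part}. That corollary produces an isomorphism
\[
\Hr^d(X,\GW_d^{d-1}(\Lc))\;\isomto\;\Hr^d(X(\real),\Z/2)/\mathrm{Sq}^1_L\overline{\gamma_\real}(\Ch^{d-1}(X)),
\]
and the surjection to $\mathrm{Coker}(\mathrm{Sq}^1_L)$ is obtained by factoring out the further subgroup $\mathrm{Sq}^1_L(\Hr^{d-1}(X(\real),\Z/2))$. Since $\mathrm{Sq}^1_L=0$, both the numerator of the first quotient remains $\Hr^d(X(\real),\Z/2)$, the subgroup $\mathrm{Sq}^1_L\overline{\gamma_\real}(\Ch^{d-1}(X))$ is zero, and $\mathrm{Coker}(\mathrm{Sq}^1_L)=\Hr^d(X(\real),\Z/2)$. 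The comparison surjection is therefore identified with the isomorphism coming from Corollary~\ref{cor:comparison_secondary_obstruction_group_first_part}, which proves the claim.

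There is really no hard step: the only thing to verify carefully is that the two quotients appearing in Corollary~\ref{cor:comparison_secondary_obstruction_group_first_part} (the one computed on cokernels of Steenrod squares in $\Ch^\bullet$ and its further quotient to $\mathrm{Coker}(\mathrm{Sq}^1_L)$) collapse to the same group when $\mathrm{Sq}^1_L=0$, which is immediate. Implicitly one works under the running hypothesis of the previous corollary that $X$ is not proper or has nonempty real locus; the case $X(\real)=\emptyset$ with $X$ proper is vacuous from the statement's standpoint (no compact components) and does not arise in the applications of interest in this paper.
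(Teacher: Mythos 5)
Your argument is correct and coincides with the paper's own proof: both invoke the preceding manifold-level proposition (applied to $M=X(\real)$) to conclude that $\mathrm{Sq}^1_L$ vanishes, hence that $\mathrm{Im}(\mathrm{Sq}^1_L)$ and $\mathrm{Sq}^1_L\overline{\gamma_\real}(\Ch^{d-1}(X))$ are both zero, and then read the isomorphism off the morphism of exact sequences from Corollary \ref{cor:comparison_secondary_obstruction_group_first_part}. Your closing remark about the implicit running hypothesis (that $X$ is not proper or $X(\real)\neq\emptyset$) is a sensible observation that the paper leaves tacit.
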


\begin{proof}
This follows for instance from the morphism of exact sequences of the proof of Corollary \ref{cor:comparison_secondary_obstruction_group_first_part} since the groups $\mathrm{Im}(\mathrm{Sq}^1_L)$ and $\mathrm{Sq}^1_L(\overline{\gamma_\real}(\Ch^{d-1}(X)))$ both vanish in this case.
\end{proof}

\subsubsection{An example}\label{subsubsection:explicit_example_non-injective}

We now exhibit a situation where the homomorphism $\Hr^d(X,\GW_d^{d-1}(\Lc))\rightarrow\mathrm{Coker}(\mathrm{Sq}^1_L)$ is not injective. In this subsection, given a smooth manifold $N$, we abbreviate $\Hr^i(N)$ to $\Hr^i(N,\Z/2)$ (singular cohomology with coefficients in $\Z/2$).

Let $M$ denote the disjoint union of two copies of the circle $S^1$ and let $\mathbb{T}=S^1\times S^1$ denote the $2$-dimensional torus. The cohomology of $M$ with coefficients in $\Z/2$ may be written as 
\[
\Hr^0(M)=\Z/2\cdot s\oplus\Z/2\cdot t,\;\Hr^1(M)=\Z/2\cdot  x\oplus\Z/2\cdot y
\] 
where $s\cup x=x$, $s\cup y=0$, $t\cup x=0$ and $t\cup y=y$. The Künneth formula gives an isomorphism 
\[
\Hr^l(M\times\mathbb{T})=\bigoplus_{i+j=l}\Hr^i(M)\otimes\Hr^j(\mathbb{T})
\] 
induced by sending $\alpha\otimes\beta\in\Hr^i(M)\otimes\Hr^j(\mathbb{T})$ to $p^*\alpha\cup q^*\beta$, where $p:M\times\mathbb{T}\rightarrow M$ and $q:M\times\mathbb{T}\rightarrow\mathbb{T}$ are the projections. In particular, this yields an element $w=p^*(x+y)\cup q^*\beta_0$ where $\beta_0\in\Hr^0(\mathbb{T})$ is the unit: it is the first Stiefel--Whitney class of the topological line bundle $L$ on $M\times\mathbb{T}$ obtained by pulling back the line bundle on $M$ which is non-trivial on both connected components of $M$. 

We also have by the Künneth formula a decomposition 
\[
\Hr^2(M\times\mathbb{T})=\Hr^1(M)\otimes\Hr^1(\mathbb{T})\oplus\Hr^0(M)\otimes\Hr^2(\mathbb{T})
\] 
since $M$, being a smooth manifold of dimension $1$, has no cohomology in degree $2$. We denote by $\beta_2$ the generator of $\Hr^2(\mathbb{T})$.  The following fact summarizes a straightforward computation.

\begin{lem}
The Bockstein $\mathrm{Sq}^1:\Hr^2(M\times\mathbb{T})\rightarrow\Hr^3(M\times\mathbb{T})$ vanishes. Moreover, cup-product with $w$ vanishes on $\Hr^1(M)\otimes\Hr^1(\mathbb{T})$ and it induces an isomorphism $\Hr^0(M)\otimes\Hr^2(\mathbb{T})\rightarrow\Hr^3(M\times\mathbb{T})$ sending $p^*s\cup q^*\beta_2$ to $p^*x\cup q^*\beta_2$ and $p^*t\cup q^*\beta_2$ to $p^*y\cup q^*\beta_2$.
\end{lem}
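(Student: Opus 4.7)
The plan is to split the lemma into its two assertions—vanishing of $\mathrm{Sq}^1$ on $\Hr^2(M\times\mathbb{T})$ and the description of cup product with $w$—and handle each by a direct Künneth calculation.

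For the first assertion, I would observe that the integral cohomology ring of $M\times\mathbb{T}$ is torsion-free. Indeed, both $M$ (a disjoint union of two circles) and $\mathbb{T}=S^1\times S^1$ have torsion-free integral cohomology concentrated in low degrees, so the integral Künneth formula shows $\Hr^*(M\times\mathbb{T};\Z)$ is torsion-free as well. Since $\mathrm{Sq}^1$ is the mod $2$ reduction of the integral Bockstein associated with $0\to\Z\xrightarrow{2}\Z\to\Z/2\to 0$, and every mod $2$ class on $M\times\mathbb{T}$ is the reduction of an integral class, $\mathrm{Sq}^1$ vanishes identically. (Alternatively one can invoke the Cartan formula together with the fact that the degree-$1$ generators of $\Hr^*(M\times\mathbb{T};\Z/2)$ square to zero for dimension reasons inside each factor.)

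For the second assertion, the Künneth decomposition (with $\Hr^2(M)=0$) gives
\[
\Hr^2(M\times\mathbb{T}) \;=\; \Hr^1(M)\otimes\Hr^1(\mathbb{T}) \;\oplus\; \Hr^0(M)\otimes\Hr^2(\mathbb{T}),
\]
and cup product with $w=p^*(x+y)\cup q^*\beta_0$ is $\Z/2$-linear, so it suffices to compute on each summand. On $\Hr^1(M)\otimes\Hr^1(\mathbb{T})$ the result is of the form $p^*((x+y)\cup u)\cup q^*v$ with $u\in\Hr^1(M)$, $v\in\Hr^1(\mathbb{T})$, which vanishes because $(x+y)\cup u\in\Hr^2(M)=0$. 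On $\Hr^0(M)\otimes\Hr^2(\mathbb{T})$ I would use the given product structure on $\Hr^*(M)$—namely $s\cup x=x$, $s\cup y=0$, $t\cup x=0$, $t\cup y=y$—to obtain $w\cup(p^*s\cup q^*\beta_2)=p^*x\cup q^*\beta_2$ and $w\cup(p^*t\cup q^*\beta_2)=p^*y\cup q^*\beta_2$ (mod $2$ commutativity makes signs irrelevant).

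Finally, applying Künneth again and using $\Hr^2(M)=\Hr^3(\mathbb{T})=0$ gives $\Hr^3(M\times\mathbb{T})=\Hr^1(M)\otimes\Hr^2(\mathbb{T})$ with basis $\{p^*x\cup q^*\beta_2,\;p^*y\cup q^*\beta_2\}$, so the displayed map sends the basis $\{p^*s\cup q^*\beta_2,\;p^*t\cup q^*\beta_2\}$ of the source to a basis of the target and is therefore an isomorphism. No substantive obstacle is anticipated: the entire lemma is a routine calculation with the mod $2$ Künneth formula and the given multiplication table on $\Hr^*(M)$.
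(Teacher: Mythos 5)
Your proof is correct, and since the paper states this lemma as "a straightforward computation" without supplying a proof, your routine Künneth/multiplication-table verification is exactly the intended argument. Both your main argument for the vanishing of $\mathrm{Sq}^1$ (torsion-freeness of the integral cohomology of $M\times\mathbb{T}$) and the computations $w\cup(p^*s\cup q^*\beta_2)=p^*x\cup q^*\beta_2$, $w\cup(p^*t\cup q^*\beta_2)=p^*y\cup q^*\beta_2$ check out against the stated product structure on $\Hr^*(M)$.
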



Let $C$ be the projective real elliptic curve given by the affine equation $y^2=x^3-x$, so that $C(\real)=M$. Recall that \cite[Theorem 1.3 (b)]{Colliot96} yields an isomorphism $\Ch^1(C)=\Hr^1(M)$. In particular, there exists a line bundle on $C$ which is non-trivial on both connected components of $M$; its pullback $\Lc$ to $C\times\mathbb{P}^1\times\mathbb{P}^1$ realizes to $L=\Lc(\real)$. In particular, $c_1(\Lc)$ maps to $w_1(L)$ under the Borel--Haefliger class map $\CH^1(C\times\mathbb{P}^1\times\mathbb{P}^1)\rightarrow\Hr^1(C(\real)\times\mathbb{P}^1(\real)\times\mathbb{P}^1(\real))$ by \cite[Th\'eor\`eme 4]{Kahn87}.

\begin{lem}
The image of the Borel--Haefliger cycle class map 
\[
\Ch^2(C\times\mathbb{P}^1\times\mathbb{P}^1)\rightarrow\Hr^2(C(\real)\times\mathbb{P}^1(\real)\times\mathbb{P}^1(\real))=\Hr^1(M,\Z/2)\otimes\Hr^1(\mathbb{T})\oplus\Hr^0(M)\otimes\Hr^2(\mathbb{T})
\] 
is $\Hr^1(M)\otimes\Hr^1(\mathbb{T})\oplus\Z/2\cdot (s+t)\otimes\Hr^2(\mathbb{T})$. In particular, 
\[
\mathrm{Sq}^1_L\overline{\gamma_\real}(\Ch^2(C\times\mathbb{P}^1\times\mathbb{P}^1))\neq\Hr^3(C(\real)\times\mathbb{P}^1(\real)\times\mathbb{P}^1(\real)).
\]
\end{lem}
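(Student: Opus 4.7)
The plan is to compute $\Ch^2(C\times\mathbb{P}^1\times\mathbb{P}^1)$ via the projective bundle formula, track each summand through the Borel--Haefliger realisation map, and then apply the preceding lemma to deduce the failure of surjectivity of $\mathrm{Sq}^1_L\overline{\gamma_\real}$. Iterating the projective bundle formula over $C$ and then over $C\times\mathbb{P}^1$ yields
\[
\Ch^*(C\times\mathbb{P}^1\times\mathbb{P}^1)=\Ch^*(C)[\alpha,\beta]/(\alpha^2,\beta^2)
\]
with $\alpha,\beta$ the pullbacks of the hyperplane classes along the two $\mathbb{P}^1$-projections. Since $\Ch^i(C)=0$ for $i\geqslant 2$ (as $C$ is a curve), this gives
\[
\Ch^2(C\times\mathbb{P}^1\times\mathbb{P}^1)=\Ch^1(C)\otimes(\Z/2\cdot\alpha\oplus\Z/2\cdot\beta)\;\oplus\;\Ch^0(C)\otimes\Z/2\cdot\alpha\beta.
\]

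Next I would use that the Borel--Haefliger map is compatible with cup-products and pullbacks along smooth morphisms, so the realisation of each generator can be computed factorwise. The classes $\alpha,\beta$ go to the standard generators of $\Hr^1(\mathbb{T})$ (and hence $\alpha\beta\mapsto\beta_2$), while $1\in\Ch^0(C)=\Z/2$ maps to the fundamental class $s+t\in\Hr^0(M,\Z/2)$. For the $\Ch^1(C)$-factor, Colliot-Th\'el\`ene--Scheiderer \cite[Theorem 1.3 (b)]{Colliot96}, applied to the smooth projective real curve $C$, provides an isomorphism $\Ch^1(C)\cong\Hr^1(M,\Z/2)$. Putting these ingredients together, the image of $\overline{\gamma_\real}$ is exactly
\[
\Hr^1(M)\otimes\Hr^1(\mathbb{T})\;\oplus\;\Z/2\cdot(s+t)\otimes\Hr^2(\mathbb{T}),
\]
which proves the first assertion of the lemma.

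For the second part, I would apply $\mathrm{Sq}^1_L=\mathrm{Sq}^1+w\cup(-)$ to this image, invoking the previous lemma: the ordinary Bockstein $\mathrm{Sq}^1$ vanishes on $\Hr^2(M\times\mathbb{T})$, the cup product with $w$ vanishes on $\Hr^1(M)\otimes\Hr^1(\mathbb{T})$, and a direct computation using $xs=x$, $yt=y$, $ys=xt=0$ gives $w\cup((s+t)\otimes\beta_2)=(x+y)\otimes\beta_2$. Hence $\mathrm{Sq}^1_L\overline{\gamma_\real}(\Ch^2(C\times\mathbb{P}^1\times\mathbb{P}^1))\subseteq\Z/2\cdot(x+y)\otimes\beta_2$, which is a strict subgroup of $\Hr^3(M\times\mathbb{T})=\Hr^1(M)\otimes\Hr^2(\mathbb{T})=\Z/2\cdot x\otimes\beta_2\oplus\Z/2\cdot y\otimes\beta_2$ (the remaining K\"unneth summands vanish since $\dim M=1$ and $\dim\mathbb{T}=2$). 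This yields the desired inequality.

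There is no substantial obstacle: the argument is essentially a careful bookkeeping built on top of the preceding lemma, with the only slightly non-obvious input being the identification $\Ch^1(C)\cong\Hr^1(M,\Z/2)$ furnished by Colliot-Th\'el\`ene--Scheiderer, which guarantees that enough algebraic line bundles exist to cover $\Hr^1(M)\otimes\Hr^1(\mathbb{T})$.
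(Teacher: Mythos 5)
Your proof is correct and follows essentially the same route as the paper: decompose $\Ch^2(C\times\mathbb{P}^1\times\mathbb{P}^1)$ via the Künneth/projective-bundle formula, use the compatibility of the Borel--Haefliger map with products together with $\Ch^1(C)\cong\Hr^1(M,\Z/2)$ and $\overline{\gamma_\real}(\Ch^0(C))=\Z/2\cdot(s+t)$, and then feed the resulting image into the preceding lemma to see that $\mathrm{Sq}^1_L$ lands only in $\Z/2\cdot(x+y)\otimes\beta_2\subsetneq\Hr^3(M\times\mathbb{T})$. The only cosmetic difference is that you make the final $\mathrm{Sq}^1_L$ computation explicit, whereas the paper leaves it to the previous lemma.
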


\begin{proof}
Since $\mathbb{P}^1\times\mathbb{P}^1$ has an algebraic cell decomposition, we have an isomorphism 
\[
\Ch^l(C\times\mathbb{P}^1\times\mathbb{P}^1)\cong\bigoplus_{i+j=l}\Ch^i(C)\oplus\Ch^j(\mathbb{P}^1\times\mathbb{P}^1)
\] 
given by $\alpha\otimes\beta\mapsto p^*\alpha\cup q^*\beta$ with $p:C\times\mathbb{P}^1\times\mathbb{P}^1\rightarrow C$ and $q:C\times\mathbb{P}^1\times\mathbb{P}^1\rightarrow\mathbb{P}^1\times\mathbb{P}^1$. As a result, it is compatible with the Künneth decomposition via the cycle class map. The algebraic cell decomposition of $\mathbb{P}^1\times\mathbb{P}^1$ also yields that the real cycle class map is an isomorphism $\Ch^j(\mathbb{P}^1\times\mathbb{P}^1)\xrightarrow{\cong}\Hr^j(\mathbb{T})$ \cite[Proposition 5.3]{Hornbostel21}. Thus the image of the map 
\[
\overline{\gamma_\real}:\Ch^i(C)\otimes\Ch^j(\mathbb{P}^1\times\mathbb{P}^1)\rightarrow\Hr^i(C(\real))\otimes\Hr^j(\mathbb{T})
\] 
is equal to $\overline{\gamma_\real}(\Ch^i(C))\otimes\Hr^j(\mathbb{T})$. This proves that it surjects onto $\Hr^1(M)\otimes\Hr^1(\mathbb{T})$ and that its image into $\Hr^0(M)\otimes\Hr^2(\mathbb{T})$ is $\overline{\gamma_\real}(\Ch^0(C))\otimes\Hr^2(\mathbb{T})$. Since $C$ is (algebraically) connected, the equality $\overline{\gamma_\real}(\Ch^0(X))=\Z/2(s+t)$ holds which yields the lemma.
\end{proof}

Finally, to obtain an example with an affine variety, we use the following classical lemma.

\begin{lem}
Let $X$ be a projective real algebraic variety. Then there exists an affine open subscheme $U$ of $X$ such that $U(\real)=X(\real)$.
\end{lem}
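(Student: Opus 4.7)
\bigskip

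\noindent \textbf{Proof plan.} The strategy is to cut out from $X$ a closed subscheme which contains no real points but whose complement is affine. The natural candidate is the intersection of $X$ with a hypersurface of $\PP^n_\real$ that misses $\real^{n+1}\smallsetminus\{0\}$.

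\medskip

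First I would fix a closed immersion $\iota\colon X\hookrightarrow\PP^n_\real$, which exists since $X$ is projective. Next, for any even integer $2m\geqslant 2$, the polynomial
\[
f=x_0^{2m}+x_1^{2m}+\cdots+x_n^{2m}\in\real[x_0,\dots,x_n]_{2m}
\]
is strictly positive on $\real^{n+1}\smallsetminus\{0\}$, so that the hypersurface $H=V(f)\subseteq\PP^n_\real$ has empty set of real points. Setting $U=X\smallsetminus\iota^{-1}(H)$, we then obtain $U(\real)=X(\real)\smallsetminus\iota^{-1}(H)(\real)=X(\real)$, since $\iota^{-1}(H)(\real)\subseteq H(\real)=\emptyset$.

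\medskip

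It remains to verify that $U$ is affine. This is the (classical) key point: the complement $\PP^n_\real\smallsetminus H$ of a hypersurface in projective space is affine, because $H$ corresponds (via the $2m$-uple Veronese embedding) to a hyperplane section, and the complement of a hyperplane in projective space is the affine space $\A^n_\real$. Consequently $U=\iota^{-1}(\PP^n_\real\smallsetminus H)$ is a closed subscheme of an affine scheme, hence affine, which concludes the proof.

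\medskip

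There is essentially no obstacle in this argument: once the existence of a strictly positive defining polynomial for a hypersurface of $\PP^n_\real$ is noted, everything else follows from the basic affineness of complements of hypersurfaces in projective space.
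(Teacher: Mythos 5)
Your proof is correct and is essentially identical to the paper's: embed $X$ in $\PP^N_\real$ and remove the trace of the real-point-free hypersurface $\{x_0^2+\cdots+x_N^2=0\}$ (the paper simply takes $2m=2$), whose complement is affine since the hypersurface is an ample divisor. No issues.
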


\begin{proof}
We embed $X$ in $\mathbb{P}^N$ for some $N$. It then suffices to consider $U=X\setminus\{x_0^2+\cdots+x_N^2=0\}$.
\end{proof}

In the situation of the previous lemma, denoting by $j:U\hookrightarrow X$ the open immersion corresponding to $U$, the mod $2$ cycle class map induces a commutative square:
\[
\xymatrix{\Ch^n(X) \ar[r]^-{j^*} \ar[d]     & \Ch^n(U) \ar[d] \\
\Hr^n(X(\real)) \ar[r]_-{j(\real)^*} & \Hr^n(U(\real))}
\]
where $j^*$ is surjective and $j(\real)^*$ is an isomorphism. Thus we deduce that $\overline{\gamma_\real}(\Ch^n(X))=\overline{\gamma_\real}(\Ch^n(U))$. In particular, the previous lemma yields:

\begin{prop}
Let $U\subseteq C\times\mathbb{P}^1\times\mathbb{P}^1=X$ be an affine open subscheme such that $U(\real)=X(\real)$. Then $\mathrm{Sq}^1_L\Hr^2(U(\real))=\Hr^3(U(\real))$ but $\mathrm{Sq}^1_L\overline{\gamma_\real}(\Ch^2(U))\neq\Hr^3(U(\real))$.
\end{prop}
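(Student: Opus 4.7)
The plan is to assemble the pieces already established: the Künneth-style description of $\Hr^\bullet(M\times\mathbb{T})$ and the first Stiefel--Whitney class $w=w_1(L)=p^*(x+y)$, the computation of the image of the Borel--Haefliger class map on $\Ch^2(C\times\mathbb{P}^1\times\mathbb{P}^1)$, and the comparison of $\overline{\gamma_\real}$ for $U$ and for $X=C\times\mathbb{P}^1\times\mathbb{P}^1$. Throughout I will use that $U(\real)=X(\real)=M\times\mathbb{T}$ and the formula $\mathrm{Sq}^1_L=\mathrm{Sq}^1+w\cup(-)$.

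First, for the surjectivity $\mathrm{Sq}^1_L\Hr^2(U(\real))=\Hr^3(U(\real))$: by the computational lemma in Subsection \ref{subsubsection:explicit_example_non-injective}, the untwisted Bockstein $\mathrm{Sq}^1$ vanishes on $\Hr^2(M\times\mathbb{T})$, so $\mathrm{Sq}^1_L$ agrees with $w\cup(-)$ on $\Hr^2(M\times\mathbb{T})$. The same lemma says that $w\cup(-)$ restricted to the Künneth summand $\Hr^0(M)\otimes\Hr^2(\mathbb{T})$ is an isomorphism onto $\Hr^3(M\times\mathbb{T})$, so $\mathrm{Sq}^1_L$ is already surjective on that subspace of $\Hr^2(U(\real))$.

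Next, for the failure of surjectivity on $\overline{\gamma_\real}(\Ch^2(U))$: the general comparison established just before the proposition, based on the commutative square of open immersion pullbacks together with the surjectivity of $j^*:\Ch^n(X)\to\Ch^n(U)$ and the bijectivity of $j(\real)^*$, gives $\overline{\gamma_\real}(\Ch^2(U))=\overline{\gamma_\real}(\Ch^2(X))$. The previous lemma has identified this image as $\Hr^1(M)\otimes\Hr^1(\mathbb{T})\oplus\Z/2\cdot(s+t)\otimes\Hr^2(\mathbb{T})$. Applying $\mathrm{Sq}^1_L=w\cup(-)$: the cup-product with $w$ vanishes on $\Hr^1(M)\otimes\Hr^1(\mathbb{T})$ (again by the lemma), while $(s+t)\otimes\beta_2$ is sent to $(x+y)\otimes\beta_2$ under the described isomorphism. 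Hence $\mathrm{Sq}^1_L\overline{\gamma_\real}(\Ch^2(U))=\Z/2\cdot(x+y)\otimes\beta_2$, a proper subgroup of $\Hr^3(M\times\mathbb{T})=\Z/2\cdot x\otimes\beta_2\oplus\Z/2\cdot y\otimes\beta_2$.

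There is no real obstacle here beyond bookkeeping: the key inputs (the cup-product table, the vanishing of $\mathrm{Sq}^1$ on degree $2$, the isomorphism $w\cup(-):\Hr^0(M)\otimes\Hr^2(\mathbb{T})\isomto\Hr^3(M\times\mathbb{T})$, the identification of the cycle image, and the equality $\overline{\gamma_\real}(\Ch^n(X))=\overline{\gamma_\real}(\Ch^n(U))$) have all been verified above. The slight subtlety worth stating explicitly is that one only needs the restriction of $w\cup(-)$ to $\Z/2\cdot(s+t)\otimes\Hr^2(\mathbb{T})$, whose image is the diagonal $\Z/2\cdot(x+y)\otimes\beta_2$ and misses the two coordinate generators; this is what separates the two conclusions of the proposition.
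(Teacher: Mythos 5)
Your proof is correct and follows exactly the route the paper intends: the paper leaves the proof of this proposition implicit ("the previous lemma yields"), and you have simply made explicit the assembly of the cup-product lemma, the vanishing of the untwisted Bockstein on $\Hr^2(M\times\mathbb{T})$, the identification of $\overline{\gamma_\real}(\Ch^2(X))$, and the equality $\overline{\gamma_\real}(\Ch^2(U))=\overline{\gamma_\real}(\Ch^2(X))$. In particular your observation that the image of $\mathrm{Sq}^1_L$ on the cycle-class image is the diagonal line $\Z/2\cdot p^*(x+y)\cup q^*\beta_2$ inside the two-dimensional group $\Hr^3(M\times\mathbb{T})$ is precisely the point of the example.
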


Consequently, the map $\Hr^3(U,\GW_3^2(\Lc))\rightarrow\mathrm{Coker}(\mathrm{Sq}_L)$ of Corollary \ref{cor:comparison_secondary_obstruction_group_first_part} is not injective in this case. Now since $U$ is affine of dimension $3$, we have an exact sequence 
\[
\Hr^1(U,\KMW_2(\Lc))\rightarrow\Hr^3(U,\GW_3^2(\Lc))\rightarrow\widetilde{\mathrm{GW}}^2(U,\Lc)\xrightarrow{e}{\CHW}^2(U,\Lc)\rightarrow 0
\] 
of abelian groups \eqref{eqn:GGW2ss}. Given a rank $2$ vector bundle $\Esc$ on $U$ of determinant $\Lc$, if its Euler class $e(\Esc)\in{\CHW}^2(U,\Lc)$ vanishes, then its class $[\Esc]-h\in \widetilde{\mathrm{GW}}^2(U,\Lc)$ defines an element in 
\[
\mathrm{Coker}(\Hr^1(U,\KMW_2(\Lc))\rightarrow\Hr^3(U,\GW_3^2(\Lc))).
\] 
Therefore in order to (dis)prove that a rank $2$ vector bundle with trivial primary obstruction in ${\CHW}^2(U,\Lc)$ actually splits, we would need to establish that the map 
\[
\Hr^1(U,\KMW_2(\Lc))\rightarrow\Hr^3(U,\GW_3^2(\Lc))
\] 
is (not) surjective. This is the object of the next section.

\subsection{Non-triviality of the first part of the secondary obstruction}

Now that we understand the group $\Hr^d(X,\mathbf{GW}_d^{d-1}(\omega_{X/\real}))$, we deal with the motivic secondary obstruction. 
The goal of the section is to prove the following theorem.
\begin{thm}\label{thm:main2}
There exists an affine open subscheme $U$ of $\mathbb{P}_\real^3$ such that $U(\real)=\mathbb{P}^3(\real)$ and a vector bundle $\mathscr{E}$ of rank $2$ on $U$ having the following properties:
\begin{itemize}[noitemsep,topsep=1pt]
\item $\det(\mathscr{E})$ is trivial.
\item $e(\mathscr{E})=0$.
\item $\mathscr{E}$ is (algebraically) indecomposable.
\item $\mathscr{E}(\real)\simeq \mathbb{P}^3(\real)\times \real^2$.
\end{itemize}
\end{thm}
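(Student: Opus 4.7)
The strategy is to refine the construction of Theorem \ref{thm:realthreefolds} to produce a rank $2$ bundle whose class in $\widetilde{\GWr}^2(U)$ lives at the deepest filtration level of the Gersten--Grothendieck--Witt filtration, i.e.\ has trivial Chow--Witt Euler class but is nontrivial in the Witt quotient $\mathrm{W}^2(U)$. I take $U = \PP^3_\real \setminus X$ with $X$ a smooth hypersurface of degree $d = 2n$ ($n$ odd, $X(\real) = \emptyset$), exactly as in Theorem \ref{thm:realthreefolds}, so $U(\real) = \PP^3(\real)$. Consider the symplectic bundle $(\mathcal{G}_U,\varphi)$ from the previous section, whose Borel class generates $\widetilde{\CH}^2(U) \cong \Z/2d$. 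The element $2d \cdot [\mathcal{G}_U, \varphi] \in \GWr^2(U)$ has trivial Borel class. By symplectic cancellation on the affine threefold $U$ (e.g.\ \cite[Proposition 11]{Fasel09c}), this class is represented by $[\mathscr{E}, \psi] + (2d-1)\cdot h$ for some rank $2$ symplectic bundle $(\mathscr{E}, \psi)$; this $\mathscr{E}$ is the desired bundle. By construction $\det \mathscr{E} = \OO_U$ and $e(\mathscr{E}) = b_1(\mathscr{E}, \psi) = 2d \cdot b_1(\mathcal{G}_U, \varphi) = 0$ in $\widetilde{\CH}^2(U)$.

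Topologically, $\mathscr{E}(\real)$ is a rank $2$ real bundle on $\PP^3(\real)$ with trivial determinant, and its topological Euler class is $\widetilde{\gamma_\real}(e(\mathscr{E})) = 0$ by compatibility of real realization with Euler classes (cf.\ Proposition \ref{prop:commutcharacteristic} and the proof of Theorem \ref{thm:corank_zero}). Lemma \ref{lem:top_splitting_over_threefolds} then yields $\mathscr{E}(\real) \cong \det\mathscr{E}(\real) \oplus \varepsilon^1 \cong \real^2$, so $\mathscr{E}(\real)$ is trivial.

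For indecomposability, suppose for contradiction $\mathscr{E} \cong L \oplus L^{-1}$ for some line bundle $L$ on $U$ (forced by $\det \mathscr{E} = \OO_U$). The symplectic form $\psi$ identifies $(\mathscr{E}, \psi)$ with the hyperbolic form $H(L)$, so $[\mathscr{E}] - h = H(L) - H(\OO_U) \in H(\K_0(U))$; equivalently, the class vanishes in the Witt group $\mathrm{W}^2(U) = \widetilde{\GWr}^2(U)/H(\K_0(U))$. It therefore suffices to show that $[\mathscr{E}]-h$ is nonzero in $\mathrm{W}^2(U)$, or even just nonzero in the deepest filtration layer $\Gr^3 \widetilde{\GWr}^2(U)$ of Example \ref{exe:GWWss_dim_3}, since the image of the hyperbolic map lies in the subgroup $\Fr^2\widetilde{\GWr}^2(U)$ coming from the Chow--Witt part (via the compatibility of the Gersten--Grothendieck--Witt filtration with the hyperbolic map).

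The core technical content is this last nonvanishing. Because $e(\mathscr{E}) = 0$, the class sits in $\Gr^3 \widetilde{\GWr}^2(U) = \Hr^3(U, \GW_3^2)/d_2^{1,0}\Hr^1(U, \KMW_2)$ by the exact sequence \eqref{eqn:GGW2ss}. First one computes $\Hr^3(U, \GW_3^2) \cong \Z/2$: using the presentation \eqref{eqn:firstpartobstruction}, a direct computation yields $\Ch^3(U) \cong \Z/2$ (all closed points of $X$ have even degree since $X(\real) = \emptyset$, so $\CH^3(\PP^3_\real) \twoheadrightarrow \CH^3(U) \cong \Z/2$), while $\mathrm{Sq}^2$ vanishes on $\Ch^2(U) = \Z/2 \cdot \overline{h^2}$ because $h^4 = 0$ in $\PP^3$. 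By Corollary \ref{cor:comparison_secondary_obstruction_group_first_part}, since $\PP^3(\real)$ is orientable and $\mathrm{Sq}^1$ vanishes on $\Hr^2(\PP^3(\real),\Z/2)$, the surjection $\Hr^3(U,\GW_3^2) \to \mathrm{Coker}(\mathrm{Sq}^1) = \Hr^3(\PP^3(\real), \Z/2) = \Z/2$ is an isomorphism. The remaining task---the main obstacle---is to prove that $d_2^{1,0}\colon \Hr^1(U, \KMW_2) \to \Z/2$ is zero and that $[\mathscr{E}]-h$ maps to the nontrivial class in $\Gr^3 \widetilde{\GWr}^2(U) \cong \Z/2$. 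For the vanishing of $d_2^{1,0}$, one analyzes the Rost--Schmid description of $\Hr^1(U, \KMW_2)$ via the sequence $0 \to \Ibf^3 \to \KMW_2 \to \KM_2 \to 0$ and compares with the topological side, where the analogous operation is seen to vanish on $\PP^3(\real)$. For the nontriviality of $[\mathscr{E}]-h$, one pushes the class into an appropriate realization---e.g.\ $\GWr^2(U)\to \mathrm{KO}^\bullet(\PP^3(\real))$---or uses a secondary obstruction argument on $\PP^3(\real)$ to detect the class $2d[\mathcal{G}_U,\varphi]-2d\cdot h$ in the bottom of the filtration. This last detection is the point where the specific geometry of $X$ (degree $2n$ with $n$ odd, $X(\real)=\emptyset$) is used in an essential way, precisely as in the arithmetic underlying Lemma \ref{lem:computation_steenrod_square}.
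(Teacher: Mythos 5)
Your overall architecture --- produce a rank $2$ symplectic bundle whose class in $\widetilde{\GWr}^2(U)$ lies in the deepest filtration step $\Fr^3$ of the Gersten--Grothendieck--Witt filtration, so that its Euler class vanishes while the class itself does not, and then conclude triviality of $\mathscr{E}(\real)$ from Lemma \ref{lem:top_splitting_over_threefolds} --- is the same as the paper's. But there are two genuine gaps in the algebraic half. First, the parity of $n$ is wrong. You keep the hypersurface of Theorem \ref{thm:realthreefolds}, where $d=2n$ with $n$ \emph{odd}; the paper switches to $n$ \emph{even} for Theorem \ref{thm:main2}, and this is essential: the remark at the end of Section \ref{section:secondary_obstructions} shows that for $n$ odd the invariant $\chi\circ i_*$ is onto, hence the differential $d_2^{1,0}\colon\Hr^1(U,\KMW_2(\Osc(-4)))\to\Hr^3(U,\GW_3^2(\Osc(-4)))$ is \emph{non}trivial. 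Since $\Hr^3(U,\GW_3^2(\Osc(-4)))\cong\Z/2$, this forces $\Gr^3\widetilde{\GWr}^2(U)=0$: the group in which you propose to detect your class vanishes identically, and your claimed vanishing of $d_2^{1,0}$ is exactly the opposite of what holds in your setting. In that situation every rank $2$ symplectic bundle on $U$ with trivial determinant and trivial Euler class is stably hyperbolic, and your $\mathscr{E}$ would in fact be trivial.

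Second, even with $n$ even you give no workable detection of the class. Your candidate $2d[\mathcal{G}_U,\varphi]-(2d-1)h$ comes with no invariant that sees it, and the fallback you suggest --- pushing into $\mathrm{KO}^\bullet(\PP^3(\real))$ or a topological secondary obstruction --- cannot work, since by your own argument $\mathscr{E}(\real)$ is trivial; the whole point of the theorem is that the class is invisible to every topological realization. The paper instead takes the manifestly codimension-$3$ class $H([\Osc_x])$ for a \emph{real rational point} $x\in U(\real)$ and detects it by the explicit ``motivic Atiyah--Rees'' invariant $\chi=p_*\circ H\colon\K_0(\PP^3_\real)\to\GWr^3(\real)=\Z/2$, computed via Serre duality and the coherent cohomology of line bundles on $\PP^3_\real$: one checks that $\chi\circ i_*=0$ (this is precisely where $n$ even enters, through a binomial-coefficient computation), that $\chi$ kills the image of the forgetful map, and that $\chi([\Osc_x])=1$, whence $H([\Osc_x])\neq 0$ by Karoubi periodicity. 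Note finally that your reduction of indecomposability to nonvanishing in $\Wr^2(U)=\widetilde{\GWr}^2(U)/H(\K_0(U))$ cannot succeed: the image of the hyperbolic map does \emph{not} lie in $\Fr^2$ (the paper's class is itself $H([\Osc_x])\in\Fr^3$ and therefore dies in the Witt group), so nonvanishing in $\Gr^3$ does not imply nonvanishing in $\Wr^2(U)$; the usable conclusion from $[\mathscr{E},\psi]\neq h$ in $\GWr^2(U)$ is directly that $\mathscr{E}$ is nontrivial, as in the paper.
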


Once again, the proof of this theorem will require some preparation. We start with an explicit invariant on $\mathrm{GW}^2(X,\omega_{X/k})$ with coefficients in $\Z/2$ for any smooth connected proper threefold.

\subsubsection{Push-forwards}\label{subsection:pushforwards}

Let $k$ be a field of characteristic different from $2$ and let $p\colon X\to \Spec (k)$ be a smooth proper threefold. Our aim is to make the push-forward map 
\[
p_*\colon \mathrm{GW}^2(X,\omega_{X/k})\to \mathrm{GW}^3(k)=\Z/2
\]
explicit (recall that by $4$-periodicity of Grothendieck--Witt groups, $\GWr^{2-3}(k)=\GWr^{-1}(k)$ is naturally isomorphic to $\GWr^{3}(k)$). Let then $\mathscr{P}$ be a locally free $\Osc_X$-module, endowed with a skew-symmetric form $\varphi\colon \mathscr{P}\to \mathscr{P}^\vee\otimes \omega_{X/k}$, where $\mathscr{P}^\vee=\mathcal{H}om_{\Osc_X}(\mathscr{P},\Osc)$. It will be convenient to see $\mathscr{P}$ as a complex concentrated in degree $3$ (we use the homological convention as is customary with (Grothendieck-)Witt groups). The push-forward of $(\mathscr{P},\varphi)$ is then explicitly given by the complex
\[
\xymatrix{\Hr^0(X,\mathscr{P})\ar[r]^-0 & \Hr^1(X,\mathscr{P})\ar[r]^-0 & \Hr^2(X,\mathscr{P})\ar[r]^-0 & \Hr^3(X,\mathscr{P})}
\]
with terms in degrees $3$ to $0$ (here and everywhere in Subsection \ref{subsection:pushforwards}, we use cohomology for the Zariski topology with coefficients in the coherent module $\mathscr{P}$). The form on this complex is given by the diagram
\[
\xymatrix{\Hr^0(X,\mathscr{P})\ar[r]^-0\ar[d]^-{\Hr^0(\varphi)} & \Hr^1(X,\mathscr{P})\ar[r]^-0\ar[d]^-{\Hr^1(\varphi)} & \Hr^2(X,\mathscr{P})\ar[r]^-0\ar[d]^-{\Hr^2(\varphi)} & \Hr^3(X,\mathscr{P})\ar[d]^-{\Hr^3(\varphi)} \\
\Hr^0(X,\mathscr{P}^\vee\otimes \omega_{X/k})\ar[r]^-0\ar[d]^-{\mathrm{can}} & \Hr^1(X,\mathscr{P}^\vee\otimes \omega_{X/k})\ar[r]^-0\ar[d]^-{\mathrm{can}}  & \Hr^2(X,\mathscr{P}^\vee\otimes \omega_{X/k})\ar[r]^-0\ar[d]^-{\mathrm{can}}  & \Hr^3(X,\mathscr{P}^\vee\otimes \omega_{X/k})\ar[d]^-{\mathrm{can}}  \\
\Hr^3(X,\mathscr{P})^*\ar[r]^-0 & \Hr^2(X,\mathscr{P})^*\ar[r]^-0 & \Hr^1(X,\mathscr{P})^*\ar[r]^-0 & \Hr^0(X,\mathscr{P})^*}
\]
where the isomorphism $\mathrm{can}\colon \Hr^i(X,\mathscr{P}^\vee\otimes \omega_{X/k})\to \Hr^{3-i}(X,\mathscr{P})^*$ is induced by Serre duality, i.e. the perfect pairing
\[
 \Hr^i(X,\mathscr{P}^\vee\otimes \omega_{X/k})\otimes \Hr^{3-i}(X,\mathscr{P})\xrightarrow{\cup}  \Hr^3(X, \omega_{X/k})\to k.
\]
The class of the above complex in $\mathrm{GW}^3(k)$ is in fact the hyperbolic form on the complex $\Hr^2(X,\mathscr{P})\xrightarrow{0}\Hr^3(X,\mathscr{P})$, whose class in $\mathrm{K}_0(k)$ is 
\[
[\Hr^3(X,\mathscr{P})]-[\Hr^2(X,\mathscr{P})]=[\Hr^0(X,\mathscr{P})^*]-[\Hr^2(X,\mathscr{P})]=[\Hr^0(X,\mathscr{P})]-[\Hr^2(X,\mathscr{P})]
\]
Since the hyperbolic map $H\colon \mathrm{K}_0(k)\to \mathrm{GW}^3(k)=\Z/2$ is the reduction modulo $2$, it follows that 
\[
p_*([\mathscr{P},\varphi])=\mathrm{dim}_k(\Hr^0(X,\mathscr{P}))+\mathrm{dim}_k(\Hr^2(X,\mathscr{P}))\pmod 2.
\]
For convenience, we write $h^i(X,\mathscr{P}):=\mathrm{dim}_k(\Hr^i(X,\mathscr{P}))\pmod 2$ in the sequel. We note that the homomorphism
\[
p_*\colon \mathrm{GW}^2(X,\omega_{X/k})\to \mathrm{GW}^3(k)
\]
is split surjective in case $X$ has a $k$-rational point. Indeed, if $x$ is such a point with closed immersion $\iota\colon x\to X$ the composite 
\[
\mathrm{GW}^3(k)\xrightarrow{\iota_*}\mathrm{GW}^2(X,\omega_{X/k})\xrightarrow{p_*} \mathrm{GW}^3(k)
\]
is the identity by functoriality of push-forwards. 

\subsubsection{The motivic Atiyah-Rees invariant}

The computations of this section can be seen as a motivic analogue of the invariant constructed in \cite[Theorem 4.2]{Atiyah76}.
We consider the composite
\[
\chi\colon \mathrm{K}_0(\mathbb{P}^3_\real)\xrightarrow{H} \mathrm{GW}^2(\mathbb{P}^3_\real,\omega_{\mathbb{P}_\Rb^3/\Rb})\xrightarrow{p_*}\Z/2.
\]
where $\omega_{\mathbb{P}_\Rb^3/\Rb}=\Osc(-4)$. Explicitly, we have 
\[
\chi(\mathscr{P})=h^0(\mathbb{P}^3_\real,\mathscr{P})+h^2(\mathbb{P}^3_\real,\mathscr{P})+h^0(\mathbb{P}^3_\real,\mathscr{P}^\vee\otimes \Osc(-4))+h^2(\mathbb{P}^3_\real,\mathscr{P}^\vee\otimes \Osc(-4))
\]
The homomorphism is obviously surjective, e.g. since we can consider the class of a rational point $\iota:x\rightarrow\mathbb{P}_\Rb^3$ in $\mathrm{K}_0(\mathbb{P}^3)$ and the commutative diagram
\begin{equation}\label{eqn:rationalpoint}
\xymatrix{ \mathrm{K}_0(\Rb)\ar[r]^-{H}\ar[d]_-{\iota_*} &  \mathrm{GW}^3(\Rb)\ar[d]_-{\iota_*}\ar@{=}[r] & \Z/2\ar@{=}[d] \\
\mathrm{K}_0(\mathbb{P}^3_\real)\ar[r]_-{H} &  \mathrm{GW}^2(\mathbb{P}^3_\real,\Osc(-4))\ar[r]_-{p_*} & \Z/2}
\end{equation}
Let now $X\subset \mathbb{P}^3_\real$ be a smooth hypersurface as in Proposition \ref{prop:existencedegreed} and let $U:=\mathbb{P}^3_{\real}\smallsetminus X$. We suppose that $X$ is given by a homogeneous polynomial $g$ of even degree $d=2n$ such that $X(\real)=\emptyset$ and $\mathrm{CH}^1(X)=\Z$ generated by the class of $i^*\Osc(-1)$, where $i\colon X\to \mathbb{P}^3_{\real}$ is the closed inclusion. 

\begin{lem}\label{lem:chi_trivial_on_pushforward}
If $n$ is even, the composite 
\[
\mathrm{K}_0(X)\xrightarrow{i_*} \mathrm{K}_0(\mathbb{P}^3_{\real})\xrightarrow{\chi} \Z/2
\]
is trivial. 
\end{lem}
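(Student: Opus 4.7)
The plan is to reduce the computation of $\chi \circ i_*$ to the parity of the ordinary coherent Euler characteristic on $X$, and then to invoke Hirzebruch--Riemann--Roch on the surface $X$ together with the observation that the absence of real points on $X$ forces every closed-point degree over $\Rb$ to be even.

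I will first simplify the four-term formula defining $\chi$ itself. For any line bundle $\Osc(m)$ on $\mathbb{P}_\Rb^3$, Serre duality gives $h^i(\mathbb{P}_\Rb^3, \Osc(-m-4)) = h^{3-i}(\mathbb{P}_\Rb^3, \Osc(m))$, so that $h^0(\Osc(-m-4)) = h^3(\Osc(m))$ and $h^2(\Osc(-m-4)) = h^1(\Osc(m))$. Substituting into the definition of $\chi$ yields
\[
\chi(\Osc(m)) \equiv \sum_{i=0}^{3} h^i(\mathbb{P}_\Rb^3, \Osc(m)) \equiv \chi(\mathbb{P}_\Rb^3, \Osc(m)) \pmod 2,
\]
where on the right-hand side $\chi$ denotes the ordinary coherent Euler characteristic. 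Since line bundles generate $\mathrm{K}_0(\mathbb{P}_\Rb^3)$ and both sides are group homomorphisms, the identity extends to all of $\mathrm{K}_0(\mathbb{P}_\Rb^3)$. Composing with $i_*$ and using that $i$ is a closed immersion (so $\mathrm{R}i_* = i_*$ and $\Hr^\bullet(\mathbb{P}_\Rb^3, i_*\mathscr{F}) = \Hr^\bullet(X, \mathscr{F})$), I will obtain
\[
\chi(i_*[\mathscr{F}]) \equiv \chi(X, \mathscr{F}) \pmod 2 \quad \text{for all } \mathscr{F} \in \mathrm{K}_0(X).
\]
It will then suffice to prove that the coherent Euler characteristic on $X$ takes only even values when $n$ is even.

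For this, Hirzebruch--Riemann--Roch on the smooth projective surface $X$ gives
\[
\chi(X, \mathscr{F}) = \mathrm{rk}(\mathscr{F})\,\chi(X, \Osc_X) + \tfrac{1}{2}\deg\bigl(c_1(\mathscr{F}) \cdot (c_1(\mathscr{F}) - K_X)\bigr) - \deg(c_2(\mathscr{F})).
\]
I will verify that each summand is even, using the hypotheses $\Pic(X) = \Z \cdot H$ with $\deg(H^2) = d = 2n$, $K_X = (d-4)H$, and $X(\Rb) = \emptyset$. First, $\chi(X, \Osc_X) = 1 + p_g = 1 + \binom{d-1}{3}$; by Lucas' theorem applied to $3 = 11_2$, $\binom{2n-1}{3}$ is odd precisely when $n$ is even, so this term is even. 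Second, writing $c_1(\mathscr{F}) = a H$, one has $\deg(c_1^2) = 2n a^2$ and $\deg(c_1 \cdot K_X) = 4n(n-2) a$: both are divisible by $4$ when $n$ is even, so $\tfrac{1}{2}\deg(c_1 \cdot (c_1 - K_X))$ is even. Third, since $X(\Rb) = \emptyset$, every closed point of $X$ has residue field $\Cb$ and hence degree $2$ over $\Rb$, so $\deg: \CH^2(X) \to \Z$ takes values in $2\Z$ and $\deg(c_2(\mathscr{F}))$ is even.

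The hardest part will be organizing the $2$-adic divisibility bookkeeping cleanly in the HRR expression; the crucial mechanism making it all work is that the absence of real points on $X$ doubles every intersection number, and this interacts with the special parity of the geometric genus (through the Lucas computation) so as to render each summand of the HRR formula individually even precisely when $n$ is even.
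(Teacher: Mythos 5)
Your proposal is correct, and it reaches the conclusion by a genuinely different route from the paper. The paper first determines explicit generators of $\mathrm{K}_0(X)$ via the coniveau filtration (classes of closed points, $[i^*\Osc(-1)]-[\Osc_X]$, and $[\Osc_X]$), then evaluates $\chi\circ i_*$ on each: the structure-sheaf generators are handled through the Koszul resolution $0\to\Osc(-2n)\to\Osc\to i_*\Osc_X\to 0$ and the cohomology of line bundles on $\mathbb{P}^3_\real$, while closed points are killed by the functoriality of push-forwards together with the vanishing of $p_*\colon\GWr^3(\cplx)\to\GWr^3(\real)$ for the even-degree extension $\cplx/\real$. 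You instead observe, via Serre duality with $\omega_{\mathbb{P}^3_\real}=\Osc(-4)$, that $\chi$ is nothing but the mod~$2$ coherent Euler characteristic on $\mathrm{K}_0(\mathbb{P}^3_\real)$, so that $\chi\circ i_*$ is the mod~$2$ Euler characteristic on $X$, and then check evenness by Hirzebruch--Riemann--Roch on the surface. The two arguments ultimately rest on the same two arithmetic inputs --- the parity of $\binom{2n-1}{3}$ (your Lucas computation matches the paper's direct factorization) and the fact that $X(\real)=\emptyset$ forces every closed point to have even degree (which you use through $\deg(c_2)\in 2\Z$ rather than through $\GWr$-push-forwards). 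Your version buys a conceptual identification of $\chi$ as a semicharacteristic in the spirit of Atiyah--Rees and avoids the explicit generator analysis of $\mathrm{K}_0(X)$, at the cost of invoking HRR and the computation $\deg_X(H^2)=d$, $K_X=(d-4)H$; note that you still need the hypothesis $\CH^1(X)=\Z\cdot H$, exactly as the paper does, to control the $c_1$-term. All the individual steps (extension of the Serre-duality identity from line bundles to $\mathrm{K}_0$ by additivity of $p_*\circ H$, the identification $\chi(\PP^3_\real,i_*\mathscr{F})=\chi(X,\mathscr{F})$ for the closed immersion $i$, and the three parity checks) are sound.
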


\begin{proof}
The filtration by codimension of support on $\Kr_0(X)$ is of the form \[\Kr^0(X)=\mathrm{F}^0\Kr_0(X)\supseteq\mathrm{F}^1\Kr_0(X)\supseteq\mathrm{F}^2\Kr_0(X)\supseteq\mathrm{F}^3\Kr_0(X)=0\] since $X$ has dimension $2$. We set $\Gr^p\Kr_0(X)=\mathrm{F}^p\Kr_0(X)/\mathrm{F}^{p+1}\Kr_0(X)$. For every $p$, there is a homomorphism $Z^p(X)\rightarrow\Gr^p\Kr_0(X)$ associating with $[Y]$ the class of the coherent $\Osc_X$-module $\Osc_Y$ for every codimension $p$ subvariety $Y$. This homomorphism factors into a surjective homomorphism $\CH^p(X)\rightarrow\Gr^p\Kr_0(X)$. Therefore $\Gr^2\Kr_0(X)=\mathrm{F}^2\Kr_0(X)$ is generated by the classes of closed points. Moreover, $\CH^1(X)$ is generated by the class of $i^*\Osc(-1)$ so $\Gr^1\Kr_0(X)$ is generated by the image of this class. Therefore the exact sequence \[0\rightarrow\mathrm{F}^2\Kr_0(X)\rightarrow\mathrm{F}^1\Kr_0(X)\rightarrow\Gr^1\Kr_0(X)\rightarrow 0\] shows that $\mathrm{F}^1\Kr_0(X)$ is generated by the classes of closed points together with $[i^*\Osc(-1)]-[\Osc_X]$. Finally $\Gr^0\Kr_0(X)=\mathbb{Z}$ is generated by $[\Osc_X]$ so the exact sequence \[0\rightarrow\mathrm{F}^1\Kr_0(X)\rightarrow\mathrm{F}^0\Kr_0(X)\rightarrow\Gr^0\Kr_0(X)\rightarrow 0\] shows that $\mathrm{K}_0(X)$ is generated by the classes of closed points, together with $[i^*\Osc(-1)]-[\Osc_X]$ and $[\Osc_X]$.

Using the (Koszul) exact sequence of sheaves on $\mathbb{P}^3_{\real}$
\[
0\rightarrow \Osc(-2n)\xrightarrow{f}\Osc\to i_*\Osc_X\rightarrow 0,
\]
we see that $i_*[\Osc_X]=[\Osc]-[\Osc(-2n)]$ and that $i_*i^*[\Osc(-1)]=[\Osc(-1)]-[\Osc(-2n-1)]$ in $\Kr_0(\mathbb{P}_\Rb^3)$. Recall the classical computation of the cohomology of projective space $\mathbb{P}_\Rb^N$: we have $\dim_\Rb\Hr^i(\mathbb{P}_\Rb^N,\Osc(m))=0$ if $i\notin\{0,N\}$, $\dim_\Rb\Hr^0(\mathbb{P}_\Rb^N,\Osc(m))=0$ if $m<0$ and $\dim_\Rb\Hr^0(\mathbb{P}_\Rb^N,\Osc(m))=\binom{N+m}{m}$ if $m\geqslant 0$ (\cite[\href{https://stacks.math.columbia.edu/tag/01XT}{Tag 01XT}]{Stacks22}). We then have 
\[
\chi([\Osc]) = h^0(\mathbb{P}^3_{\real},\Osc)+h^2(\mathbb{P}^3_{\real},\Osc)+h^0(\mathbb{P}^3_{\real},\Osc(-4))+h^2(\mathbb{P}^3_{\real},\Osc(-4))=\binom{3+0}{0}=1,
\]
while
\begin{eqnarray*}
\chi([\Osc(-2n)]) & = & h^0(\mathbb{P}^3_{\real},\Osc(-2n))+h^2(\mathbb{P}^3_{\real},\Osc(-2n))+h^0(\mathbb{P}^3_{\real},\Osc(2n-4))+h^2(\mathbb{P}^3_{\real},\Osc(2n-4)) \\
& = & \binom {2n-4+3}{2n-4}\pmod 2 \\
& = & \binom {2n-1}{3}\pmod 2 \\
& = & \frac{(2n-1)(2n-2)(2n-3)}{2\cdot 3} \pmod 2 \\
& = & \frac{(2n-1)(n-1)(2n-3)}{3}\pmod 2 \\
& = & 1\pmod 2
\end{eqnarray*}
since $3=1\pmod 2$ and $n=0\pmod 2$ by assumption. It follows that $\chi(i_*[\Osc_X])=0$. 
The same kind of computations yield $\chi([\Osc(-1)])=0$, and 
\[
\chi([\Osc(-2n-1)])=\binom {2n}3\pmod 2=0\pmod 2
\]
showing that $\chi(i_*i^*[\Osc(-1)])=0$. Therefore $\chi\circ i_*:\Kr_0(X)\rightarrow\mathbb{Z}/2$ vanishes on $[\Osc_X]$ and on $[i^*\Osc(-1)]-[\Osc_X]$.

It now remains to be seen that the class in $\Kr_0(X)$ of any closed point of $X$ is mapped to $0$ under $\chi\circ i_*$. Now, any closed point of $X$ is complex, and we consider $x:\Spec(\cplx)\to X$  such a point. We have a commutative diagram
\[
\xymatrix{\mathrm{K}_0(\cplx)\ar[r]\ar[d]_-{x_*} & \mathrm{GW}^3(\cplx)\ar@{=}[r]\ar[d]_-{x_*} &  \mathrm{GW}^3(\cplx)\ar@{=}[d] \\
\mathrm{K}_0(X)\ar[r]\ar[d]_-{i_*} &  \mathrm{GW}^1(X,\omega_{X/k})\ar[r]\ar[d]_-{i_*} &  \mathrm{GW}^3(\cplx)\ar[d] \\
\mathrm{K}_0(\mathbb{P}^3_{\real})\ar[r] &  \mathrm{GW}^2(\mathbb{P}^3_{\real},\Osc(-4))\ar[r]_-{p_*} &  \mathrm{GW}^3(\real)}
\]
and we conclude by observing that the push-forward map $\mathrm{GW}^3(\cplx)\to \mathrm{GW}^3(\real)$ is trivial by Example \ref{exe:pushfoward_GW_even_degree}.
\end{proof}

\begin{cor}\label{cor:invariant_chi_at_rational_point}
If $n$ is even, the surjective homomorphism $\chi\colon \mathrm{K}_0(\mathbb{P}^3_{\real})\to \Z/2$ induces a surjective map $\overline{\chi}\colon \mathrm{K}_0(U)\to \Z/2$. If $x\in U(\real)$ is any rational point, we have $\overline{\chi}([\Osc_x])=1$.
\end{cor}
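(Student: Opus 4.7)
The plan is twofold. First, to obtain the induced map $\overline{\chi}$, I invoke the K-theory localization sequence for the closed immersion $i\colon X \hookrightarrow \mathbb{P}^3_\real$ with open complement $U$, which yields an exact sequence
\[
\Kr_0(X) \xrightarrow{i_*} \Kr_0(\mathbb{P}^3_\real) \xrightarrow{j^*} \Kr_0(U) \to 0.
\]
Since $\chi$ is a homomorphism and Lemma \ref{lem:chi_trivial_on_pushforward} establishes that $\chi \circ i_* = 0$ (this is precisely where the hypothesis $n$ even is used), $\chi$ vanishes on the image of $i_*$, hence on the kernel of $j^*$. It therefore descends uniquely to a homomorphism $\overline{\chi}\colon \Kr_0(U) \to \Z/2$ such that $\chi = \overline{\chi} \circ j^*$. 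Surjectivity of $\overline{\chi}$ is immediate: $\chi$ is surjective, so $\overline{\chi}$ must be too.

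For the second assertion, given a rational point $x \in U(\real)$ with inclusion $\iota_x\colon \Spec(\real) \to U$, I view $x$ also as a rational point of $\mathbb{P}^3_\real$ and apply the commutative diagram \eqref{eqn:rationalpoint} with the rational point inclusion now landing in $U$. Functoriality of the hyperbolic map and of proper pushforwards for Grothendieck--Witt theory yields
\[
\chi([\Osc_x]) = p_* H (\iota_x)_* [\Osc_\real] = p_* (\iota_x)_* H([\Osc_\real]) = (p \circ \iota_x)_* H([\Osc_\real]),
\]
and $p \circ \iota_x = \mathrm{id}_{\Spec \real}$. Since $H\colon \Kr_0(\real) \to \GWr^3(\real) = \Z/2$ is reduction modulo $2$ (as recalled at the end of Subsection \ref{subsection:pushforwards}), we obtain $\chi([\Osc_x]) = 1$. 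Because $x$ factors through $U$, this value equals $\overline{\chi}([\Osc_x])$, completing the computation.

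No real obstacle arises here: the key input, namely the vanishing $\chi \circ i_* = 0$, has already been done in Lemma \ref{lem:chi_trivial_on_pushforward}, and everything else is formal from the functoriality of the constructions. The only mild care needed is to note that the diagram \eqref{eqn:rationalpoint} was set up for a rational point of $\mathbb{P}^3_\real$, but since any rational point of $U$ is in particular a rational point of $\mathbb{P}^3_\real$ lying in the open subset $U$, the pushforward in K-theory commutes with restriction to $U$ and the computation goes through unchanged.
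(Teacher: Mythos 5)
Your proposal is correct and follows essentially the same route as the paper: descend $\chi$ along the localization sequence using Lemma \ref{lem:chi_trivial_on_pushforward}, then evaluate at a rational point via the commutative diagram \eqref{eqn:rationalpoint}. You simply spell out in slightly more detail the functoriality of pushforwards and the fact that $H$ is reduction modulo $2$, which the paper leaves implicit.
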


\begin{proof}
Indeed we have an exact sequence \[\mathrm{K}_0(X)\xrightarrow{i_*}\mathrm{K}_0(\mathbb{P}^3_{\real})\to \mathrm{K}_0(U)\to 0\] of abelian groups: since $\chi\circ i_*=0$ by Lemma \ref{lem:chi_trivial_on_pushforward}, $\chi$ indeed descends to $\Kr_0(U)$. To show that $\overline{\chi}([\Osc_x])=1$ for $x\in U(\Rb)$, it suffices by definition to show that $[\Osc_x]\in\Kr_0(\mathbb{P}_\Rb^3)$ is mapped to $1$ under $\chi$. This follows from \eqref{eqn:rationalpoint}. 
\end{proof}

\begin{lem}\label{lem:trivialchi}
Let $F\colon \mathrm{GW}^1(U,\Osc(-4))\to \mathrm{K}_0(U)$ be the forgetful map. Then, the composite
\[
\mathrm{GW}^1(U,\Osc(-4))\xrightarrow{F} \mathrm{K}_0(U)\xrightarrow{\overline{\chi}} \Z/2
\] 
is trivial. 
\end{lem}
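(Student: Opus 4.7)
The plan is to show, for any $\alpha\in\mathrm{GW}^1(U,\Osc(-4))$, that some (equivalently, every) lift $\beta\in\mathrm{K}_0(\mathbb{P}^3_{\real})$ of $F(\alpha)\in\mathrm{K}_0(U)$ admits a decomposition
\[
\beta = i_*(\delta) + F(\tilde{\alpha})
\]
with $\delta\in\mathrm{K}_0(X)$ and $\tilde{\alpha}\in\mathrm{GW}^1(\mathbb{P}^3_{\real},\Osc(-4))$. Granted such a decomposition, linearity of $\chi=p_*\circ H$ gives $\chi(\beta)=\chi(i_*(\delta))+\chi(F(\tilde{\alpha}))$; the first summand vanishes by Lemma \ref{lem:chi_trivial_on_pushforward}, while the second equals $p_*(H(F(\tilde{\alpha})))=0$ by the Karoubi exactness $H\circ F=0$ on $\mathbb{P}^3_{\real}$. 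Therefore $\overline{\chi}(F(\alpha))=\chi(\beta)=0$.

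Pick any lift $\beta\in\mathrm{K}_0(\mathbb{P}^3_{\real})$ of $F(\alpha)$, possible because $j^*:\mathrm{K}_0(\mathbb{P}^3_{\real})\to\mathrm{K}_0(U)$ is surjective. Karoubi exactness on $U$ yields $j^*H(\beta)=H(F(\alpha))=0$, so by the Grothendieck--Witt localization sequence combined with dévissage one may write $H(\beta)=i_*\gamma$ for some $\gamma\in\mathrm{GW}^1(X,\omega_{X/\real})$. Since $H(\beta)$ lies in the image of the hyperbolic map, its image in $\mathrm{W}^2(\mathbb{P}^3_{\real},\Osc(-4))$ vanishes; by the naturality of the quotient to Witt groups under $i_*$, the class $\bar{\gamma}\in\mathrm{W}^1(X,\omega_{X/\real})$ satisfies $i_*\bar{\gamma}=0$. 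By the analogous Witt-group localization sequence, $\bar{\gamma}=\partial(\bar{\xi})$ for some $\bar{\xi}\in\mathrm{W}^1(U,\Osc(-4))$. Lift $\bar{\xi}$ to $\xi\in\mathrm{GW}^1(U,\Osc(-4))$ along the Karoubi surjection $\mathrm{GW}^1\twoheadrightarrow\mathrm{W}^1$, which is available since $\mathrm{K}_{-1}(U)=0$: then $\gamma-\partial(\xi)$ has trivial Witt class, hence lies in the image of $H_X$, giving $\gamma=\partial(\xi)+H_X(\delta)$ for some $\delta\in\mathrm{K}_0(X)$.

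Apply $i_*$ to this equality. The relation $i_*\circ\partial=0$ holds by exactness of the Grothendieck--Witt localization sequence at $\mathrm{GW}^2_X(\mathbb{P}^3_{\real},\Osc(-4))$, and the compatibility $i_*\circ H_X=H\circ i_*$ is the naturality of the hyperbolic map under proper pushforward. Therefore
\[
H(\beta)=i_*\gamma=i_*H_X(\delta)=H(i_*(\delta)),
\]
so $\beta-i_*(\delta)\in\ker H=\mathrm{image}(F)$ by the Karoubi exactness on $\mathbb{P}^3_{\real}$, yielding the desired decomposition $\beta=i_*(\delta)+F(\tilde{\alpha})$.

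The main obstacle will be justifying the naturality statements used in this argument---in particular the compatibility $i_*\circ H_X=H\circ i_*$ of the hyperbolic map with the proper pushforward and the analogous compatibility for the quotient $\mathrm{GW}\twoheadrightarrow\mathrm{W}$---with the correct bookkeeping of twists (via the adjunction $\omega_{X/\mathbb{P}^3_{\real}}\otimes i^*\Osc(-4)\simeq\omega_{X/\real}$) and the Grothendieck--Witt degree shift by one coming from the codimension-one immersion $i:X\hookrightarrow\mathbb{P}^3_{\real}$ under dévissage.
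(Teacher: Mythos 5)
Your overall strategy---reduce everything to showing that a lift $\beta\in\Kr_0(\mathbb{P}^3_\real)$ of $F(\alpha)$ decomposes as $i_*(\delta)+F(\tilde\alpha)$, then kill the two pieces by Lemma \ref{lem:chi_trivial_on_pushforward} and by the Karoubi relation $H\circ F=0$ on $\mathbb{P}^3_\real$---is attractive and genuinely different from the paper's proof, which instead represents $\alpha$ by an explicit two-term symmetric complex via Walter's theorem and computes $F(\alpha)$ directly. But there is a genuine gap in how you produce the decomposition. The correction term ``$\partial(\xi)$'' with $\xi\in\GWr^1(U,\Osc(-4))$, treated as an element of $\GWr^1(X,\omega_{X/\real})$ satisfying $i_*\partial(\xi)=0$, does not exist. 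The localization sequence for Grothendieck--Witt theory is the long exact sequence of homotopy groups of a fibre sequence of spectra, so its connecting map \emph{lowers the homotopy degree}: out of $\GWr^1(U,\Osc(-4))=\GWr^1_0(U,\Osc(-4))$ it lands in $\GWr^1_{-1,X}(\mathbb{P}^3_\real,\Osc(-4))\cong\Wr^1(X,\omega_{X/\real})$, a Witt group. The map whose image is $\ker\bigl(i_*\colon\GWr^2_{0,X}(\mathbb{P}^3_\real,\Osc(-4))\cong\GWr^1(X,\omega_{X/\real})\to\GWr^2(\mathbb{P}^3_\real,\Osc(-4))\bigr)$---the one for which ``exactness at $\GWr^2_X$'' applies---has source the \emph{higher} group $\GWr^2_1(U,\Osc(-4))$, not $\GWr^1(U,\Osc(-4))$. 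So the expression $\gamma-\partial(\xi)$ is not defined, and the subsequent appeal to $i_*\circ\partial=0$ is not available for your $\partial$.

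What your argument actually establishes is only that the Witt class $\bar\gamma\in\Wr^1(X,\omega_{X/\real})$ lies in the image of Balmer's residue $\partial_W\colon\Wr^1(U,\Osc(-4))\to\Wr^1(X,\omega_{X/\real})$. This controls $\gamma$ modulo $\mathrm{im}(H_X)$, i.e., it gives $\gamma=H_X(\delta)+\kappa$ with $\pi_X(\kappa)=\partial_W(\bar\xi)$, but it does not place $\kappa$ in $\ker(i_*)$. To get that, you would need to lift $\bar\xi$ not merely to $\GWr^1(U,\Osc(-4))$ but all the way to $\GWr^2_1(U,\Osc(-4))$; by the Bott sequence on $U$, the image of $\eta\colon\GWr^2_1(U,\Osc(-4))\to\GWr^1(U,\Osc(-4))$ is exactly $\ker\bigl(F\colon\GWr^1(U,\Osc(-4))\to\Kr_0(U)\bigr)$, so such a lift exists precisely when $\xi$ can be corrected by an element of $\ker(\partial')$ to have trivial forgetful image---which is essentially a reformulation of the statement you are trying to prove. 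So the argument is circular at this point rather than merely imprecise about twists and shifts, and the decomposition $\beta=i_*(\delta)+F(\tilde\alpha)$ remains unproved.
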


\begin{proof}
The group $\mathrm{GW}^1(U,\Osc(-4))$ is generated by classes of the form $[\mathscr{P}_\bullet,\varphi]$, with $\mathscr{P}_{\bullet}$ is bounded complex of locally free $\Osc_U$-modules, and $\varphi\colon \mathscr{P}_{\bullet}\to \mathscr{P}_\bullet^\vee\otimes \Osc(-4)[1]$ is a symmetric quasi-isomorphism. Using \cite[Theorem 7.1]{Walter03}, we may even suppose that $\mathscr{P}_\bullet$ is of the form
\[
0\to \mathscr{P}_1\xrightarrow{d}\mathscr{P}_0\to 0
\] 
in which case the quasi-isomorphism takes the form
\[
\xymatrix{0\ar[r] &  \mathscr{P}_1\ar[r]^-{d}\ar[d]_-{\varphi_1} & \mathscr{P}_0\ar[r]\ar[d]^-{\varphi_0} &  0 \\
0\ar[r] &  \mathscr{P}_0^\vee\otimes \Osc(-4)\ar[r]^-{d} & \mathscr{P}_1^\vee\otimes\Osc(-4)\ar[r] &  0}
\]
In $\mathrm{K}_0(U)$, this quasi isomorphism yields an equality 
\[
[\mathscr{P}_0\oplus (\mathscr{P}_0^\vee\otimes \Osc(-4)) ]=[\mathscr{P}_1\oplus (\mathscr{P}_1^\vee\otimes \Osc(-4)) ]
\]
As $F([\mathscr{P}_\bullet,\varphi])=[\mathscr{P}_0]-[\mathscr{P}_1]$, we conclude easily.
\end{proof}

\begin{cor}
Let $x\in U(\real)$ be a rational point. Then, the class $[\Osc_x]\in \mathrm{K}_0(U)$ yields a non trivial class $[x]$ in $\mathrm{GW}^2(U,\Osc(-4))$ under the hyperbolic functor
\[
H\colon \mathrm{K}_0(U)\to \mathrm{GW}^2(U,\Osc(-4)). 
\]
Further, the image of $[x]$ under the first Borel class (aka the Euler class) 
\[
\mathrm{GW}^2(U,\Osc(-4))\xrightarrow{b_1}\CHW^2(U,\Osc(-4))
\]
is trivial.  
\end{cor}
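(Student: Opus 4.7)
The plan is to handle the two assertions separately. For the non-triviality of $[x]=H([\Osc_x])$, I combine Lemma \ref{lem:trivialchi} with Karoubi periodicity. For the vanishing of the first Borel class $b_1([x])$, I exploit the fact that $[\Osc_x]$ has support of codimension $3$ together with the coniveau filtration on $\GWr^2(U,\Osc(-4))$ analysed in Example \ref{exe:GWWss_dim_3}.

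For the first assertion, the Karoubi periodicity exact sequence at $n=1$ and $\Lc=\Osc(-4)$ reads
\[
\GWr^1(U,\Osc(-4)) \xrightarrow{F} \Kr_0(U) \xrightarrow{H} \GWr^2(U,\Osc(-4)),
\]
so $\ker H=\operatorname{image} F$. Lemma \ref{lem:trivialchi} asserts $\overline{\chi}\circ F=0$, so $\overline{\chi}$ descends to a well-defined homomorphism $\tilde{\chi}\colon \operatorname{image}(H)\to\Z/2$ satisfying $\tilde{\chi}\circ H=\overline{\chi}$. Evaluating at $[\Osc_x]$ and applying Corollary \ref{cor:invariant_chi_at_rational_point} yields $\tilde{\chi}([x])=\overline{\chi}([\Osc_x])=1$, so $[x]\neq 0$ in $\GWr^2(U,\Osc(-4))$.

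For the second assertion, the key observation is that $\Osc_x$ restricts to zero on the open complement $V:=U\smallsetminus\{x\}$, so $[\Osc_x]|_V=0$ in $\Kr_0(V)$. By naturality of the hyperbolic functor with respect to restriction along the open immersion $V\hookrightarrow U$, one has $[x]|_V=H([\Osc_x]|_V)=0$ in $\GWr^2(V,\Osc(-4))$. Hence $[x]$ lies in the third step $\Fr^3\widetilde{\GWr}^2(U,\Osc(-4))$ of the coniveau filtration. The surjection $\widetilde{\GWr}^2(U,\Osc(-4))\to\CHW^2(U,\Osc(-4))$ appearing in \eqref{eqn:GGW2ss} is the projection onto the second graded piece $\Gr^2\widetilde{\GWr}^2(U,\Osc(-4))$, and this projection vanishes on $\Fr^3$; hence so does $b_1$ applied to $[x]$.

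The one subtlety I expect to be the main obstacle is the identification of the surjection in \eqref{eqn:GGW2ss} with the first Borel class $b_1$ on all of $\widetilde{\GWr}^2(U,\Osc(-4))$, not merely on rank $2$ symplectic bundles. Since both are group homomorphisms which coincide with the Euler class on rank $2$ symplectic bundles, this identification should follow from naturality of the Gersten--Grothendieck--Witt spectral sequence, but it is the sort of compatibility that warrants care in making precise.
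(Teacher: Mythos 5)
Your proposal is correct and follows essentially the same route as the paper: the non-triviality of $[x]$ is obtained from $\overline{\chi}\circ F=0$ together with Karoubi periodicity, and the vanishing of $b_1([x])$ from the fact that $[x]$ lies in $\mathrm{F}^3$ of the coniveau filtration while $b_1$ is the projection onto $\mathrm{F}^2/\mathrm{F}^3\cong\CHW^2(U,\Osc(-4))$. The only difference is that you make explicit why $[x]\in\mathrm{F}^3$ (restriction to $U\smallsetminus\{x\}$ and the localization sequence), a step the paper leaves implicit.
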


\begin{proof}
Since the map $\overline{\chi}:\Kr_0(U)\rightarrow\Zb/2$ takes the value $1$ at the class $[\Osc_x]$ of the rational point $x$ in $\Kr_0(X)$ by Corollary \ref{cor:invariant_chi_at_rational_point}, and since $\overline{\chi}\circ F=0$ by Lemma \ref{lem:trivialchi}, the class $[\Osc_x]$ does \emph{not} lie in the image of the forgetful homomorphism $F:\GWr^1(U,\Osc(-4))\rightarrow\Kr_0(U)$. The Karoubi periodicity exact sequence 
\[
\mathrm{GW}^1(U,\Osc(-4))\xrightarrow{F}\mathrm{K}_0(U)\xrightarrow{H} \mathrm{GW}^2(U,\Osc(-4))
\]
then shows that $H([\Osc_x])\neq 0$. For the second statement, we consider the filtration of $\mathrm{F}^{\bullet}\mathrm{GW}^2(U,\Osc(-4))$ by codimension of support. The Euler class induces an isomorphism
\[
\mathrm{F}^{2}\mathrm{GW}^2(U,\Osc(-4))/\mathrm{F}^{3}\mathrm{GW}^2(U,\Osc(-4))\xrightarrow{b_1}\CHW^2(U,\Osc(-4))
\]
from which the result follows.
\end{proof}

\begin{rem}
In fact, the exact sequence \eqref{eqn:GGW2ss} 
\[
\Z/2=\Hr^3(X,\mathbf{GW}_3^2(\Osc(-4)))\to \widetilde{\mathrm{GW}}^2(U,\Osc(-4))\xrightarrow{b_1}\CHW^2(U,\Osc(-4))\to 0 
\]
and the above corollary shows that the left-hand morphism is injective. In other terms, the differential 
\[
d_2^{1,0}\colon \Hr^1(U,\KMW_2(\Osc(-4)))\to \Hr^3(X,\mathbf{GW}_3^2(\Osc(-4)))
\]
in \eqref{eqn:GGW2ss} is trivial. On the other hand, if $n$ is odd, then one can show that the homomorphism $\mathrm{K}_0(X)\xrightarrow{i_*} \mathrm{K}_0(\mathbb{P}^3_{\real})\xrightarrow{\chi} \Z/2$ of Lemma \ref{lem:trivialchi} is in fact onto. Consequently, 
\[
d_2^{1,0}\colon\Hr^1(U,\KMW_2(\Osc(-4)))\to \Hr^3(X,\mathbf{GW}_3^2(\Osc(-4)))
\]
is non trivial in that case. So, the cohomological operation $\mathrm{K}(\KMW_2,1)\to \mathrm{K}(\mathbf{GW}_3^2,3)$ given by the spectral sequence is in general non trivial, even for threefolds.
\end{rem}

We can finally prove the main result of this section. 

\begin{proof}[Proof of Theorem \ref{thm:main2}]
The Grothendieck-Witt groups $\mathrm{GW}^i(U,\Osc(-4))$ are canonically isomorphic to the Grothendieck-Witt groups $\mathrm{GW}^i(U)$, and similarly for Chow-Witt groups. From the above corollary, we obtain a non trivial element of $\mathrm{GW}^2(U)$ which has a trivial Euler class. Since $U$ is affine of dimension $3$, every element of $\mathrm{GW}^2(U)$ is the class of an actual rank $2$ symplectic bundle. This holds in particular for the class $[x]$ associated to a rational point, and we obtain a non trivial (and thus indecomposable) symplectic bundle $\mathscr{E}$ with $b_1( \mathscr{E})=e( \mathscr{E})=0$. Thus $\Esc(\Rb)$ has trivial Euler class and trivial determinant, it is trivial over the threefold $\mathbb{P}^3(\Rb)$ by Lemma \ref{lem:top_splitting_over_threefolds} as required.
\end{proof}

\begin{rem}
One can be a bit more precise on the vector bundle $\mathscr{E}$. The class of the rational point $x$ in $\mathrm{K}_0(U)$ is of the form
\[
[\Osc_x]=[\Osc]-3[\Osc(-1)]+3[\Osc(-2)]-[\Osc(-3)]
\]
and consequently 
\[
H([\Osc_x])=[\Osc\oplus \Osc(-4)]-3[\Osc(-1)\oplus \Osc(-3)]+3[\Osc(-2)\oplus \Osc(-2)]-[\Osc(-3)\oplus \Osc(-1)]
\]
in $\widetilde{\mathrm{GW}}^2(U,\Osc(-4))$. Tensoring with $\Osc(2)$, we obtain the following class:
\[
[\mathscr{E}]=[\Osc(2)\oplus \Osc(-2)]-4[\Osc(1)\oplus \Osc(-1)]+3[\Osc\oplus \Osc].
\]
in $\widetilde{\mathrm{GW}}^2(U)$.
\end{rem}

\subsection{The second part of the secondary obstruction}
Recall once again that the homotopy sheaf relevant for computing the motivic secondary obstruction is of the form \eqref{eqn:firstnontrivialhomotopysheaf}
\[
\KM_{n+2}/24 \longrightarrow \piaone_n(\A^n\smallsetminus\{0\}) \longrightarrow \mathbf{GW}_{n+1}^n
\]
for $n\geq 4$ and slightly more complicated for $n=2,3$ \cite{Asok12a,Asok12c}. In the previous sections, we've studied the first part of the secondary obstruction, and we now study the other part, i.e., the one coming from $\KM_{n+2}/24$. This leads us to understand the group $\Hr^{d}(X,\KM_{d+1}/a)$ where $a\in\Z$. We first state the following easy argument that we extensively use in the sequel.

\begin{lem}\label{lem:epi_in_degree_d}
Let $X$ be a topological space of (sheaf) cohomological dimension $\leqslant d$. If $g:\Bbf\rightarrow\Cbf$ is an epimorphism of abelian sheaves on $X$, then the map $f_*:\Hr^d(X,\Bbf)\rightarrow\Hr^d(X,\Cbf)$ is an epimorphism of abelian groups. Moreover, if $\Abf\xrightarrow{f}\Bbf\xrightarrow{g}\Cbf\rightarrow 0$ is an exact sequence of abelian sheaves on $X$, then the induced complex $\Hr^d(X,\Abf)\rightarrow\Hr^d(X,\Bbf)\rightarrow\Hr^d(X,\Cbf)\rightarrow 0$ is acyclic.
\end{lem}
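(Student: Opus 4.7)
The plan is to reduce both assertions to the vanishing $\Hr^{d+1}(X,-)=0$, which holds by the assumption that $X$ has cohomological dimension $\leqslant d$. Everything will follow from the long exact sequence in sheaf cohomology attached to appropriate short exact sequences of sheaves.

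For the first statement, I would set $\Kbf=\ker(g)$ and use the short exact sequence
\[
0\rightarrow\Kbf\rightarrow\Bbf\xrightarrow{g}\Cbf\rightarrow 0.
\]
The associated long exact sequence in cohomology contains the fragment
\[
\Hr^d(X,\Bbf)\xrightarrow{g_*}\Hr^d(X,\Cbf)\rightarrow\Hr^{d+1}(X,\Kbf),
\]
and the right-hand group vanishes by the cohomological dimension hypothesis. Hence $g_*$ is surjective.

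For the second statement, I would break the four-term exact sequence $\Abf\xrightarrow{f}\Bbf\xrightarrow{g}\Cbf\rightarrow 0$ into two short exact sequences: namely, writing $\Kbf=\ker(g)=\operatorname{im}(f)$, one has
\[
0\rightarrow\ker(f)\rightarrow\Abf\rightarrow\Kbf\rightarrow 0\quad\text{and}\quad 0\rightarrow\Kbf\rightarrow\Bbf\xrightarrow{g}\Cbf\rightarrow 0.
\]
Surjectivity of $g_*$ at degree $d$ is already handled by the first part. For acyclicity in the middle, the long exact sequence associated to the second short exact sequence shows that the kernel of $g_*:\Hr^d(X,\Bbf)\to\Hr^d(X,\Cbf)$ equals the image of $\Hr^d(X,\Kbf)\to\Hr^d(X,\Bbf)$. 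On the other hand, applying the first part of the lemma to the surjection $\Abf\twoheadrightarrow\Kbf$ gives that $\Hr^d(X,\Abf)\to\Hr^d(X,\Kbf)$ is surjective. Since $f$ factors as the composite $\Abf\twoheadrightarrow\Kbf\hookrightarrow\Bbf$, the image of $f_*$ in $\Hr^d(X,\Bbf)$ coincides with the image of $\Hr^d(X,\Kbf)\to\Hr^d(X,\Bbf)$, and we conclude $\ker(g_*)=\operatorname{im}(f_*)$ as desired.

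There is no real obstacle here: the argument is entirely formal given the dimension hypothesis, and the only subtlety is remembering to factor $f$ through its image so as to apply the first half of the lemma to the surjection $\Abf\twoheadrightarrow\operatorname{im}(f)$.
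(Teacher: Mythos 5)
Your proof is correct and follows essentially the same route as the paper: both arguments set $\Kbf=\ker(g)=\operatorname{im}(f)$, deduce surjectivity of $g_*$ in degree $d$ from the vanishing of $\Hr^{d+1}(X,\Kbf)$, and then obtain exactness in the middle by factoring $f$ through the surjection $\Abf\twoheadrightarrow\Kbf$ and applying the first part to see that $\operatorname{im}(f_*)=\operatorname{im}\bigl(\Hr^d(X,\Kbf)\to\Hr^d(X,\Bbf)\bigr)=\ker(g_*)$.
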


The lemma applies when $X$ is a noetherian topological space of Krull dimension $\leqslant d$, for example a noetherian scheme of Krull dimension $\leqslant d$ (for us, $X$ will usually be a smooth $k$-scheme of dimension $d$ for some field $k$).

\begin{proof}
Let $i:\Kbf\rightarrow\Bbf$ be the kernel of the homomorphism $\Bbf\rightarrow\Cbf$ of sheaves. Then the short exact sequence 
\[
0\longrightarrow\Kbf\longrightarrow\Bbf\longrightarrow\Cbf\longrightarrow 0
\] 
of sheaves on $X$ induces an exact sequence 
\[
\Hr^d(X,\Bbf)\xrightarrow{g_*}\Hr^d(X,\Cbf)\rightarrow\Hr^{d+1}(X,\Kbf)\] of abelian groups. Since $X$ has cohomological dimension $\leqslant d$, $\Hr^{d+1}(X,\Kbf)=0$ so the map $g_*:\Hr^d(X,\Bbf)\rightarrow\Hr^d(X,\Cbf)$ is surjective. If now we have an acyclic complex \[\Abf\xrightarrow{f}\Bbf\xrightarrow{g}\Cbf\rightarrow 0\] of sheaves, then $\Kbf$ is the image of $f$ by definition so the factored map $\overline{f}:\Abf\rightarrow\Kbf$ induces an epimorphism $\overline{f}_*:\Hr^d(X,\Abf)\rightarrow\Hr^d(X,\Kbf)$ of abelian groups by the preceding argument. Therefore, the homomorphisms \[\Hr^d(X,\Kbf)\xrightarrow{\iota_*}\Hr^d(X,\Bbf),\;\Hr^d(X,\Abf)\xrightarrow{\overline{f}_*}\Hr^d(X,\Kbf)\xrightarrow{i_*}\Hr^d(X,\Bbf)\] have the same image. Note that the second homomorphism is $f_*:\Hr^d(X,\Abf)\rightarrow\Hr^d(X,\Bbf)$, and that we have an exact sequence \[\Hr^d(X,\Kbf)\xrightarrow{\iota_*}\Hr^d(X,\Bbf)\xrightarrow{g_*}\Hr^d(X,\Cbf)\rightarrow 0\] by the preceding argument and by definition of $i$. Then $\mathrm{Im}(\iota_*)=\mathrm{Im}(f_*)\subseteq\Hr^d(X,\Bbf)$ so that the sequence \[\Hr^d(X,\Abf)\xrightarrow{f_*}\Hr^d(X,\Bbf)\rightarrow\Hr^d(X,\Cbf)\rightarrow 0\] is exact as required.
\end{proof}

Here is a sample corollary:

\begin{cor}\label{cor:cohomology_divisible_quotient}
Let $X$ be a topological space of cohomological dimension $\leqslant d$. Let $\Abf$ be an abelian sheaf on $X$ and let $l\neq 0$ be an integer. Then we have an exact sequence \[\Hr^d(X,\Abf)\xrightarrow{l}\Hr^d(X,\Abf)\rightarrow\Hr^d(X,\Abf/l)\rightarrow 0\] of abelian groups. In particular, $\Hr^d(X,\Abf)$ is $l$-divisible if, and only if, $\Hr^d(X,\Abf/l)=0$.
\end{cor}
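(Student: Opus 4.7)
The plan is to deduce the corollary directly from Lemma~\ref{lem:epi_in_degree_d} applied to a right-exact sequence built from multiplication by $l$. First, I would observe that by definition of the quotient sheaf $\Abf/l$, the sequence
\[
\Abf \xrightarrow{\;l\;} \Abf \longrightarrow \Abf/l \longrightarrow 0
\]
of abelian sheaves on $X$ is exact: multiplication by $l$ need not be injective in general, but the cokernel is $\Abf/l$ on the nose (sheafification preserves cokernels).

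Next, since $X$ has cohomological dimension $\leqslant d$, the second part of Lemma~\ref{lem:epi_in_degree_d} applies verbatim to this three-term right-exact complex, yielding the exact sequence
\[
\Hr^d(X,\Abf)\xrightarrow{\;l\;}\Hr^d(X,\Abf)\longrightarrow\Hr^d(X,\Abf/l)\longrightarrow 0
\]
of abelian groups, which is precisely the desired sequence.

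For the \textquotedblleft in particular\textquotedblright{} clause, I would simply unwind the definition: $\Hr^d(X,\Abf/l)=0$ if and only if the map $l\colon \Hr^d(X,\Abf)\to\Hr^d(X,\Abf)$ from the exact sequence is surjective, which is tautologically the assertion that every class in $\Hr^d(X,\Abf)$ admits an $l$-th root, i.e.\ that $\Hr^d(X,\Abf)$ is $l$-divisible. There is essentially no obstacle here; the only substantive content is the cohomological dimension hypothesis ensuring $\Hr^{d+1}$ vanishes, which was already the content of the preceding lemma.
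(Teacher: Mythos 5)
Your argument is correct and is exactly the paper's proof: apply the second part of Lemma~\ref{lem:epi_in_degree_d} to the right-exact sequence $\Abf\xrightarrow{l}\Abf\rightarrow\Abf/l\rightarrow 0$, then read off the divisibility statement from surjectivity of multiplication by $l$. Nothing to add.
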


\begin{proof}
The claimed exact sequence follows from the exact sequence \[\Abf\xrightarrow{l}\Abf\rightarrow\Abf/l\rightarrow 0\] of abelian sheaves and Lemma \ref{lem:epi_in_degree_d}. 
\end{proof}

\begin{lem}\label{lem:a_divisible}
Let $k$ be a perfect field and let $a$ be an integer. Let $Y$ be a smooth $k$-scheme of dimension $d$ and suppose that $\kappa(x)^\times$ is $a$-divisible for every closed point $x\in Y$. Then the group $\Hr^d(Y,\KM_{d+1})$ is $a$-divisible.
\end{lem}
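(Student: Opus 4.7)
\medskip

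\noindent\textbf{Proof proposal.} The plan is to reduce the $a$-divisibility of $\Hr^d(Y,\KM_{d+1})$ to the vanishing of a cohomology group computed directly via the Rost--Schmid complex. By Corollary~\ref{cor:cohomology_divisible_quotient} applied to the strictly $\Abb^1$-invariant sheaf $\KM_{d+1}$, the group $\Hr^d(Y,\KM_{d+1})$ is $a$-divisible if and only if $\Hr^d(Y,\KM_{d+1}/a)=0$ (the corollary applies since $Y$ is a noetherian scheme of Krull dimension $d$, hence of Zariski cohomological dimension at most $d$).

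Next I would identify $\KM_{d+1}/a$ as a strictly $\Abb^1$-invariant sheaf whose contractions are again easy to describe. Since the contraction functor $\Abf\mapsto\Abf_{-1}$ on strictly $\Abb^1$-invariant sheaves is exact and $(\KM_n)_{-1}=\KM_{n-1}$, applying contractions $n$ times to the short exact sequence
\[
0\rightarrow\KM_{d+1}\xrightarrow{a}\KM_{d+1}\rightarrow\KM_{d+1}/a\rightarrow 0
\]
shows that $(\KM_{d+1}/a)_{-n}=\KM_{d+1-n}/a$ for every $n\geqslant 0$. Consequently the Rost--Schmid resolution of $\KM_{d+1}/a$ on $Y$ takes the explicit form
\[
0\rightarrow\bigoplus_{x\in Y^{(0)}}\KM_{d+1}(\kappa(x))/a\rightarrow\cdots\rightarrow\bigoplus_{x\in Y^{(d)}}\KM_1(\kappa(x))/a\rightarrow 0,
\]
and $\Hr^d(Y,\KM_{d+1}/a)$ is the cohomology of this complex in its top degree, in particular a quotient of $\bigoplus_{x\in Y^{(d)}}\KM_1(\kappa(x))/a=\bigoplus_{x\in Y^{(d)}}\kappa(x)^\times/a$.

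Finally, since $Y$ is smooth of dimension $d$, the codimension $d$ points of $Y$ are exactly the closed points, so by hypothesis $\kappa(x)^\times$ is $a$-divisible for every $x\in Y^{(d)}$. Hence each summand $\kappa(x)^\times/a$ vanishes, the last term of the complex is zero, and therefore $\Hr^d(Y,\KM_{d+1}/a)=0$. Combined with Corollary~\ref{cor:cohomology_divisible_quotient}, this yields the $a$-divisibility of $\Hr^d(Y,\KM_{d+1})$. There is no real obstacle here; the only subtle point is the use of exactness of contraction together with the identification $(\KM_n)_{-1}=\KM_{n-1}$ to compute the Rost--Schmid complex of the quotient sheaf, which is precisely what makes the argument work.
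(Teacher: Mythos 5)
Your argument is correct, and at its core it performs the same computation as the paper: everything comes down to the fact that the degree-$d$ term of the relevant Rost--Schmid complex is a direct sum over the closed points $x\in Y^{(d)}$ of groups built from $\kappa(x)^\times$. The difference is one of packaging. You first invoke Corollary~\ref{cor:cohomology_divisible_quotient} to reduce $a$-divisibility of $\Hr^d(Y,\KM_{d+1})$ to the vanishing of $\Hr^d(Y,\KM_{d+1}/a)$, then compute the Rost--Schmid complex of the quotient sheaf (using exactness of contraction to get $(\KM_{d+1}/a)_{-n}=\KM_{d+1-n}/a$) and observe that its top term $\bigoplus_{x\in Y^{(d)}}\kappa(x)^\times/a$ vanishes. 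The paper instead argues directly: $\Hr^d(Y,\KM_{d+1})$ is a quotient of $\bigoplus_{x\in Y^{(d)}}\KM_1(\kappa(x))=\bigoplus_{x\in Y^{(d)}}\kappa(x)^\times$, and a quotient of an $a$-divisible group is $a$-divisible. The direct route is shorter and avoids the quotient-sheaf machinery altogether; your route has the small advantage of also recording the vanishing of $\Hr^d(Y,\KM_{d+1}/a)$, which is the form in which the statement actually gets used downstream (e.g.\ in Lemma~\ref{lem:odddivisible}). Two minor points of hygiene: the sequence $\KM_{d+1}\xrightarrow{a}\KM_{d+1}\rightarrow\KM_{d+1}/a\rightarrow 0$ should not be asserted to be short exact (multiplication by $a$ need not be injective on Milnor $\Kr$-theory), but only its right-exactness is needed for the contraction computation, so nothing breaks; and Corollary~\ref{cor:cohomology_divisible_quotient} requires $a\neq 0$, whereas the paper's direct argument covers the degenerate case $a=0$ as well.
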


\begin{proof}
The Rost--Schmid complex computing cohomology with coefficients in $\KM_{d+1}$ at $X$ terminates as \[\bigoplus_{x\in Y^{(d-1)}}\Kr_{2}^{\mathrm{M}}(\kappa(x))\xrightarrow{d}\bigoplus_{x\in Y^{(d)}}\Kr_1^{\mathrm{M}}(\kappa(x))\rightarrow 0\] so that $\Hr^d(X_\Cb,\KM_{d+1})=\coker d$ is a quotient of $\bigoplus_{x\in Y^{(d)}}\Kr_1^{\mathrm{M}}(\kappa(x))$. Therefore to prove that $\Hr^d(X_\Cb,\KM_{d+1})$ is $a$-divisible, it suffices to show that $\bigoplus_{x\in Y^{(d)}}\Kr_1^{\mathrm{M}}(\kappa(x))$ is $a$-divisible. In fact, it suffices to prove that $\Kr_1^{\mathrm{M}}(\kappa(x))$ is $a$-divisible for every $x\in Y^{(d)}$. Now if $x\in Y^{(d)}$, then $x$ is a closed point of $Y$, therefore $\Kr_1^{\mathrm{M}}(\kappa(x))=\kappa(x)^\times$ is $a$-divisible by assumption as required.
\end{proof}

\begin{lem}\label{lem:odddivisible}
The group $\Hr^d(X_\Cb,\KM_{d+1})$ is divisible. If $n$ is an odd integer, $\Hr^d(X,\KM_{d+1}/n)=0$ and the group $\Hr^d(X,\KM_{d+1})$ is $n$-divisible.
\end{lem}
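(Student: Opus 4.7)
The plan is to deduce both assertions directly from Lemma \ref{lem:a_divisible} and Corollary \ref{cor:cohomology_divisible_quotient}, by a careful analysis of the multiplicative groups of the residue fields at closed points.

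For the first assertion, I would apply Lemma \ref{lem:a_divisible} to the smooth $\Cb$-scheme $Y = X_\Cb$ of dimension $d$ (with $k = \Cb$, which is perfect). Every closed point $x \in X_\Cb$ has residue field a finite extension of $\Cb$, hence $\kappa(x) = \Cb$. Since $\Cb^\times$ is divisible, it is $a$-divisible for every integer $a \geqslant 1$, and Lemma \ref{lem:a_divisible} then gives the $a$-divisibility of $\Hr^d(X_\Cb, \KM_{d+1})$ for every such $a$; this is exactly the statement that $\Hr^d(X_\Cb, \KM_{d+1})$ is divisible.

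For the second assertion, I would again appeal to Lemma \ref{lem:a_divisible}, this time over $k = \real$ (perfect), with $Y = X$ and $a = n$ odd. The key observation is that every closed point $x \in X$ has residue field $\kappa(x)$ equal to either $\real$ or $\Cb$. In both cases $\kappa(x)^\times$ is $n$-divisible for $n$ odd: for $\kappa(x) = \Cb$ this is automatic from divisibility, and for $\kappa(x) = \real$ it follows from the fact that the map $t \mapsto t^n$ on $\real^\times$ is a bijection when $n$ is odd (every real number admits a unique real $n$-th root). Lemma \ref{lem:a_divisible} therefore yields the $n$-divisibility of $\Hr^d(X, \KM_{d+1})$.

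It remains to pass from $n$-divisibility of $\Hr^d(X, \KM_{d+1})$ to the vanishing of $\Hr^d(X, \KM_{d+1}/n)$. This is exactly the content of Corollary \ref{cor:cohomology_divisible_quotient}: since $X$ has cohomological dimension at most $d = \dim X$, multiplication by $n$ on $\Hr^d(X, \KM_{d+1})$ is surjective if and only if $\Hr^d(X, \KM_{d+1}/n) = 0$. There is essentially no main obstacle here beyond the elementary arithmetic fact about odd powers being surjective on $\real^\times$; everything else is an invocation of the two previous results.
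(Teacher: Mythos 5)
Your proposal is correct and follows essentially the same route as the paper's own proof: reduce to the divisibility of $\kappa(x)^\times$ at closed points via Lemma \ref{lem:a_divisible} (residue fields $\Cb$ in the first case, $\real$ or $\Cb$ in the second, with $n$-divisibility of $\real^\times$ for odd $n$ coming from the surjectivity of odd power maps), and then pass to the vanishing of $\Hr^d(X,\KM_{d+1}/n)$ via Corollary \ref{cor:cohomology_divisible_quotient}. No gaps.
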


\begin{proof}
Note that the group $\Cb^\times$ is divisible since $\Cb$ is algebraically closed. If $x\in X_\Cb$ is a closed point, then $\kappa(x)/\Cb$ is finite hence trivial so $\kappa(x)^\times$ is divisible. The divisibility of $\Hr^d(X_\Cb,\KM_{d+1})$ then follows from Lemma \ref{lem:a_divisible}. In the second case, the residue field of $X$ at a closed point is isomorphic to $\Rb$ or to $\Cb$ so by the preceding argument, it suffices to show that $\Rb^\times$ is $n$-divisible for $n$ odd. This is a consequence of the intermediate value theorem.
\end{proof}

To understand $\Hr^d(X,\KM_{d+1}/a)$ when $a$ is even, we consider the map $f\colon X_{\cplx} \to X$. It is a degree $2$ étale cover.

\begin{cor}
Let $X$ be a smooth affine real variety of dimension $d$, and let $a\neq 0$ be an even integer. Then, the epimorphism $\KM_{d+1}/a\to \KM_{d+1}/2$ induces an isomorphism
\[
\Hr^d(X,\KM_{d+1}/a)\to \Hr^d(X,\KM_{d+1}/2)
\] of abelian groups.
\end{cor}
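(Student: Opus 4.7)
The plan is to reduce the statement to an equality of subgroups $2M = aM$ inside $M := \Hr^d(X, \KM_{d+1})$, and then to establish the non-trivial inclusion by a transfer argument along the étale double cover $f \colon X_\cplx \to X$.

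First, I would apply Corollary \ref{cor:cohomology_divisible_quotient} to each of the two short exact sequences of Nisnevich sheaves
\[
0 \to \KM_{d+1} \xrightarrow{n} \KM_{d+1} \to \KM_{d+1}/n \to 0
\]
for $n = a$ and $n = 2$. Since $X$ is a smooth $\Rb$-variety of dimension $d$, this identifies $\Hr^d(X,\KM_{d+1}/a) = M/aM$ and $\Hr^d(X,\KM_{d+1}/2) = M/2M$, and identifies the map in the statement with the natural surjection $M/aM \twoheadrightarrow M/2M$. Its kernel is $2M/aM$, so the claim is equivalent to $2M = aM$. The inclusion $aM \subseteq 2M$ is automatic because $a$ is even, so what remains is to prove $2M \subseteq aM$, equivalently that multiplication by $2$ annihilates $\Hr^d(X,\KM_{d+1}/a)$.

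For this, I would exploit the finite étale degree-$2$ morphism $f\colon X_\cplx \to X$. For the cycle module $\KM_{d+1}/a$ (or directly on the Rost--Schmid complex, using the norm on Milnor K-theory at the generic points of each codimension stratum), one has a pushforward $f_*$ on Nisnevich cohomology together with the projection formula $f_* \circ f^* = \deg(f) = 2$. Consequently multiplication by $2$ on $\Hr^d(X,\KM_{d+1}/a)$ factors through $\Hr^d(X_\cplx, \KM_{d+1}/a)$; it therefore suffices to show that this last group vanishes. But $X_\cplx$ is a smooth $\cplx$-variety of dimension $d$, Lemma \ref{lem:odddivisible} guarantees that $\Hr^d(X_\cplx, \KM_{d+1})$ is divisible, and Corollary \ref{cor:cohomology_divisible_quotient} applied with $l = a$ then forces $\Hr^d(X_\cplx, \KM_{d+1}/a) = 0$.

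The only delicate point is invoking the transfer $f_*$ and the formula $f_* \circ f^* = \deg(f)$ on Nisnevich cohomology of $\KM_{d+1}/a$ along the finite étale map $f$; this is completely standard in the cycle module formalism (see \cite{Rost96}) and can be verified on the level of Rost--Schmid complexes, so no genuine obstacle arises. Everything else is a bookkeeping exercise using the results assembled earlier in this subsection.
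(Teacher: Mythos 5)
Your proof is correct and uses the same essential ingredients as the paper's: right-exactness of $\Hr^d$ in top degree, divisibility of $\Hr^d(X_\cplx,\KM_{d+1})$, and the transfer $f_*f^*=2$ along the étale double cover $X_\cplx\to X$. The only (harmless) difference is organizational: the paper first splits $a=2^m n$ and reduces to $a=2^m$ before running the transfer, whereas you reformulate the claim as $2M=aM$ inside $M=\Hr^d(X,\KM_{d+1})$ and apply the transfer directly with mod-$a$ coefficients, which slightly streamlines the bookkeeping.
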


\begin{proof}
If $a=2^m\cdot n$ with $n$ odd, we have an exact sequence
\[
\KM_{d+1}/n\xrightarrow{\cdot 2^m}\KM_{d+1}/a\to \KM_{d+1}/2^m\to 0
\]
that induces an exact sequence \[\Hr^d(X,\KM_{d+1}/n)\rightarrow\Hr^d(X,\KM_{d+1}/a)\rightarrow\Hr^d(X,\KM_{d+1}/2^m)\rightarrow 0\] by Lemma \ref{lem:epi_in_degree_d}. The group $\Hr^d(X,\KM_{d+1})$ is $n$-divisible by Lemma \ref{lem:odddivisible} so the group $\Hr^d(X,\KM_{d+1}/n)$ vanishes by Corollary \ref{cor:cohomology_divisible_quotient}. Therefore the map $\Hr^d(X,\KM_{d+1}/a)\rightarrow\Hr^d(X,\KM_{d+1}/2^m)$ is an isomorphism so that we may replace $a$ by $2^m$ and suppose that $a=2^m$. Now we use the pull-backs and push-forward along $f:X_\Cb\rightarrow X$ to obtain a composite
\[
\Hr^d(X,\KM_{d+1}/2^{m})\xrightarrow{f^*}\Hr^d(X_{\cplx},\KM_{d+1}/2^{m})\xrightarrow{f_*}\Hr^d(X,\KM_{d+1}/2^{m})
\] 
which is multiplication by $2$ since $f$ is a degree $2$ étale cover. The group $\Hr^d(X_\Cb,\KM_{d+1}/2^m)$ vanishes by Corollary \ref{cor:cohomology_divisible_quotient} since $\Hr^d(X_\Cb,\KM_{d+1})$ is divisible by Lemma \ref{lem:odddivisible}. Thus $2\Hr^d(X,\KM_{d+1}/2^{m})=0$. Now applying Corollary \ref{cor:cohomology_divisible_quotient} with $\Abf=\KM_{d+1}/2^m$ and $l=2$, so that $\Abf/l=\KM_{d+1}/2$, we have an exact sequence \[\Hr^d(X,\KM_{d+1}/2^m)\xrightarrow{2}\Hr^d(X,\KM_{d+1}/2^m)\rightarrow\Hr^d(X,\KM_{d+1}/2)\rightarrow 0\] so that $\Hr^d(X,\KM_{d+1}/2^m)\rightarrow\Hr^d(X,\KM_{d+1}/2)$ is an isomorphism. This completes the proof.
\end{proof}

\begin{cor}\label{cor:eventotwo}
Let $X$ be a smooth affine real variety of dimension $d$, and let $a\neq 0$ be an even integer. Then the real realization map 
\[
\overline{\gamma_\real}:\Hr^d(X,\KM_{d+1})\rightarrow\Hr^d(X(\real),\Z/2)
\]
induces an isomorphism $\Hr^d(X,\KM_{d+1}/a)\rightarrow\Hr^d(X(\real),\Z/2)$.
\end{cor}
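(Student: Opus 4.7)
The plan is to reduce to the case $a=2$ using the preceding corollary, and then to identify $\Hr^d(X,\KM_{d+1}/2)$ with $\Hr^d(X(\real),\Z/2)$ by comparing the signature mod $2$ on $\KM_{d+1}/2$ to Jacobson's integral signature on $\Ibf^{d+1}$. The commutative diagram \eqref{diag:morphism_exact_sequence_real} specialized to $n=d+1$ will be the main tool.

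I would first apply the preceding corollary: the reduction morphism $\KM_{d+1}/a\to\KM_{d+1}/2$ induces an isomorphism $\Hr^d(X,\KM_{d+1}/a)\xrightarrow{\cong}\Hr^d(X,\KM_{d+1}/2)$. Since the group $\Hr^d(X(\real),\Z/2)$ is $2$-torsion, the homomorphism $\overline{\gamma_\real}$ factors through this reduction, so it suffices to prove that the signature mod $2$ induces an isomorphism $\Hr^d(X,\KM_{d+1}/2)\to\Hr^d(X(\real),\Z/2)$. Next I would invoke Voevodsky's resolution of the Milnor conjecture on quadratic forms to upgrade the Pfister form epimorphism $\KM_{d+1}/2\twoheadrightarrow\overline{\Ibf}^{d+1}$ recorded in Subsection \ref{subsection:real_cycle_class_maps} to an isomorphism of Nisnevich sheaves; under this identification, $\overline{\mathrm{sign}}$ becomes the morphism $\overline{\mathrm{sign}}_{d+1}\colon\overline{\Ibf}^{d+1}\to\iota_*\Z/2$ that appears in \eqref{diag:morphism_exact_sequence_real}.

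I would then take $\Hr^d$ of \eqref{diag:morphism_exact_sequence_real} with $n=d+1$ and trivial twist, obtaining a commutative ladder whose top row is
\[
\Hr^d(X,\Ibf^{d+2})\to\Hr^d(X,\Ibf^{d+1})\to\Hr^d(X,\overline{\Ibf}^{d+1})\to 0
\]
and whose bottom row is
\[
\Hr^d(X(\real),\Z)\xrightarrow{\cdot 2}\Hr^d(X(\real),\Z)\to\Hr^d(X(\real),\Z/2)\to 0.
\]
Both rows are right-exact by Lemma \ref{lem:epi_in_degree_d}, applied to $X$ of Krull dimension $d$ and to the $d$-dimensional manifold $X(\real)$. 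The two left vertical arrows, induced by $\mathrm{sign}_{d+2}$ and $\mathrm{sign}_{d+1}$, are isomorphisms by Jacobson's Theorem \ref{theo:jacobson} applied with $t=d+2$ and $t=d+1$ (both strictly greater than $d$). A four-term five-lemma diagram chase then forces the rightmost vertical arrow $\overline{\mathrm{sign}}_{d+1}\colon\Hr^d(X,\overline{\Ibf}^{d+1})\to\Hr^d(X(\real),\Z/2)$ to be an isomorphism, which transported through the identifications of the first step gives the claimed isomorphism $\Hr^d(X,\KM_{d+1}/a)\cong\Hr^d(X(\real),\Z/2)$. The argument simply assembles pieces that are already in place in the preliminaries, and no single step seems to present a real obstacle.
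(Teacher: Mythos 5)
Your argument is correct and is exactly the assembly the paper intends for this corollary (which is stated without an explicit proof): reduce to $a=2$ via the preceding corollary, identify $\KM_{d+1}/2$ with $\overline{\Ibf}^{d+1}$ by the Milnor conjecture, and run the cokernel comparison on $\Hr^d$ of diagram \eqref{diag:morphism_exact_sequence_real} for $n=d+1$, where Jacobson's Theorem \ref{theo:jacobson} (with $t=d+1,d+2>d$) and the top-degree right-exactness from Lemma \ref{lem:epi_in_degree_d} make the four-term five lemma applicable. All the compatibilities you invoke (the factorization of $\overline{\mathrm{sign}}$ through $\overline{\Ibf}^{d+1}$, and the inclusion $\Ibf^{d+2}\subset\Ibf^{d+1}$ realizing to multiplication by $2$) are recorded in Subsection \ref{subsection:real_cycle_class_maps}, so no step is missing.
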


\section{Positive results}
In this section, we gather positive results about the main question, using the results of the above section.

\subsection{Varieties without real points}

\begin{thm}\label{thm:norealpoints}
Let $X$ be a smooth affine real variety of dimension $d\geq 3$ such that $X(\real)=\emptyset$. If $\Esc$ is a locally free $\Osc_X$-module of rank $d-1$, then $\Esc\simeq \Esc'\oplus \Osc_X$ if and only if $c_{d-1}(\Esc)=0$.
\end{thm}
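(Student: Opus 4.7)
The necessity is clear: if $\Esc\simeq\Esc'\oplus\Osc_X$ then $\Esc'$ has rank $d-2<d-1$, so $c_{d-1}(\Esc)=c_{d-1}(\Esc')=0$. For sufficiency, set $\Lc=\det\Esc$ and assume $c_{d-1}(\Esc)=0$. The strategy is to run the motivic obstruction theory summarised in Section~\ref{sec:motivicsplitting} in two stages: first check that the primary obstruction (the Euler class) vanishes, then check that the secondary obstruction vanishes. The hypothesis $X(\real)=\emptyset$ will kill the relevant obstruction groups at both stages.

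For the primary obstruction, Proposition~\ref{prop:corank_one_primary_obstruction} applies vacuously in our setting: the surjectivity hypothesis on the mod~$2$ real cycle class map $\overline{\gamma_\real}\colon\Hr^{d-2}(X,\KM_{d-1})\to\Hr^{d-2}(X(\real),\Z/2)$ is automatic because its target is zero, and the second hypothesis on the summand $\bigoplus_T\Z/2$ is vacuous since the set $T$ of compact connected components of $X(\real)$ is empty. Moreover $e(\Esc(\real))\in\Hr^{d-1}(X(\real),\Z(L))=0$ vanishes automatically, so the proposition combined with $c_{d-1}(\Esc)=0$ yields $e(\Esc)=0\in\CHW^{d-1}(X,\Lc)$.

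It remains to handle the secondary obstruction, which lies in $\mathrm{Coker}(\partial)\subseteq\Hr^d(X,\piaone_{d-1}(\A^{d-1}\setminus\{0\})(\Lc))$; we will show that the ambient group itself vanishes. For $d\geq 5$, the exact sequence \eqref{eqn:firstnontrivialhomotopysheaf}
\[
0\to\KM_{d+1}/24\to\piaone_{d-1}(\A^{d-1}\setminus\{0\})\to\mathbf{GW}_d^{d-1}\to 0
\]
(exact on the left because $\real$ has characteristic zero) remains exact after twisting by $\Lc$; the twist on the left-hand term is trivial because $\gm$ acts trivially on Milnor $\Kr$-theory. Taking cohomology in the top degree yields a right-exact sequence
\[
\Hr^d(X,\KM_{d+1}/24)\to\Hr^d(X,\piaone_{d-1}(\A^{d-1}\setminus\{0\})(\Lc))\to\Hr^d(X,\mathbf{GW}_d^{d-1}(\Lc))\to 0.
\]
Corollary~\ref{cor:eventotwo} identifies the left flank with $\Hr^d(X(\real),\Z/2)=0$, while \eqref{eqn:firstpartobstruction} presents the right flank as a quotient of $\Ch^d(X)$; since $X$ is smooth affine (hence not proper) with $X(\real)=\emptyset$, the mod~$2$ cycle class map $\Ch^d(X)\to\Hr^d(X(\real),\Z/2)=0$ is an isomorphism by \cite[Theorem~1.3(b)]{Colliot96}, so $\Ch^d(X)=0$. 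Both flanks therefore vanish, forcing the middle group to vanish as well. The cases $d=3,4$ are handled in the same way using the descriptions of $\piaone_2(\A^2\setminus\{0\})$ and $\piaone_3(\A^3\setminus\{0\})$ from \cite{Asok12a} and \cite{Asok12c}, which fit into exact sequences of the same shape with kernel a Milnor $\Kr$-theory sheaf modulo an even integer and cokernel $\mathbf{GW}_d^{d-1}$; the two vanishings above again suffice.

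\textbf{Main obstacle.} The crux is identifying that the top-degree cohomology housing the secondary obstruction really does vanish. This boils down to two independent vanishings: $\Hr^d(X,\KM_{d+1}/24)=0$, where the even denominator is essential (Corollary~\ref{cor:eventotwo}), and $\Ch^d(X)=0$, which is where the Colliot-Th\'el\`ene--Scheiderer isomorphism for smooth affine $\real$-varieties without real points decisively enters. Both inputs are genuinely special to the setting $X(\real)=\emptyset$ (and more generally to $X(\real)$ of low cohomological dimension, explaining the extension mentioned in Remark~\ref{rem:cohomologicaldimensiondminus3}).
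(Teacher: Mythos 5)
Your overall strategy is exactly the paper's: kill the primary obstruction via Proposition~\ref{prop:corank_one_primary_obstruction} (whose hypotheses are vacuous when $X(\real)=\emptyset$), then show the group housing the secondary obstruction vanishes by sandwiching $\Hr^d(X,\piaone_{d-1}(\A^{d-1}\setminus\{0\})(\Lc))$ between $\Hr^d(X,\KM_{d+1}/24)$ and $\Hr^d(X,\mathbf{GW}_d^{d-1}(\Lc))$, both of which vanish by Corollary~\ref{cor:eventotwo} and by \eqref{eqn:firstpartobstruction} together with the Colliot-Th\'el\`ene--Scheiderer isomorphism $\Ch^d(X)\cong\Hr^d(X(\real),\Z/2)=0$. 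For $d\geq 5$ this is complete and coincides with the paper's argument (the paper routes the right flank through Corollary~\ref{cor:comparison_secondary_obstruction_group_first_part}, which is the same computation).

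The one genuine inaccuracy is your treatment of $d=3,4$. There the fiber of the degree map is \emph{not} ``a Milnor $\Kr$-theory sheaf modulo an even integer'': the relevant sequences are
\[
\KM_{4}/12\times_{\overline{\Ibf^{4}}}\Ibf^{4}\to\piaone_{2}(\A^{2}\setminus\{0\})\to\mathbf{GW}_{3}^{2},\qquad
\KM_{5}/24\times_{\overline{\Ibf^{5}}}\Ibf^{5}\to\piaone_{3}(\A^{3}\setminus\{0\})\to\mathbf{GW}_{4}^{3},
\]
so the left-hand sheaf is a fiber product involving a power of the fundamental ideal, and ``the two vanishings above'' do not literally suffice: one must also kill the contribution of $\Ibf^{d+2}$. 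The paper does this by resolving the fiber product via $0\to\Ibf^{d+2}\to\KM_{d+1}/a\times_{\overline{\Ibf^{d+1}}}\Ibf^{d+1}\to\KM_{d+1}/a\to 0$ and invoking Jacobson's Theorem~\ref{theo:jacobson} to get $\Hr^d(X,\Ibf^{d+2}(\Lc))\cong\Hr^d(X(\real),\Z(L))=0$. Since $d=3$ is the minimal case the theorem covers, this third vanishing should be stated explicitly; it does hold under your hypothesis, so the gap is easily repaired, but as written the low-dimensional cases are not proved.
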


\begin{proof}
We just have to prove that the primary and secondary obstructions vanish. For the first one, we note that the conditions of Proposition \ref{prop:corank_one_primary_obstruction} are trivially satisfied in that case. Thus, our assumption that $c_{d-1}(\Esc)=0$ implies the vanishing of the (algebraic) Euler class. We are thus left with proving that the secondary obstruction also vanishes. If $d\geq 5$, we have an exact sequence 
\[
\KM_{d+1}/24\to \piaone_{d-1}(\A^{d-1}\smallsetminus\{0\})\to \mathbf{GW}_{d}^{d-1}
\]
and we know that $\Hr^d(X,\KM_{d+1}/24)=\Hr^d(X(\real),\Z/2)=0$. On the other hand, Corollary \ref{cor:comparison_secondary_obstruction_group_first_part} shows that $\Hr^d(X,\mathbf{GW}_{d}^{d-1}(\Lc))=0$ as well, for any line bundle $\Lc$ over $X$. The secondary obstruction then vanishes automatically and the result is proved. If $d=4$, the relavant exact sequence of sheaves is of the form
\[
\KM_{5}/24\times_{\overline{\mathbf{I}^5}}\mathbf{I}^5 \to \piaone_{3}(\A^{3}\smallsetminus\{0\})\to \mathbf{GW}_{4}^{3}
\]
Using the exact sequence of sheaves $0\to \mathbf{I}^6\to \mathbf{I}^5\to \overline{\mathbf{I}}^5\to 0$, we see that the left-hand sheaf fits into an exact sequence
\[
0\to \mathbf{I}^6\to \KM_{5}/12\times_{\overline{\mathbf{I}^5}}\mathbf{I}^5\to \KM_{5}/12\to 0
\]
and it suffices to observe that $\Hr^d(X,\mathbf{I}^6(\Lc))=0$ for any line bundle $\Lc$ over $X$ by Theorem \ref{theo:jacobson} to conclude as in the previous case. The same argument works if $d=3$, using this time the exact sequence
\[
\KM_{4}/12\times_{\overline{\mathbf{I}^4}}\mathbf{I}^4 \to \piaone_{2}(\A^{2}\smallsetminus\{0\})\to \mathbf{GW}_{3}^{2}.
\]
\end{proof}

\begin{rem}\label{rem:cohomologicaldimensiondminus3}
Similar computations of the group $\mathrm{H}^d(X,\KMW_{d+1})$ appear in \cite{Das17} (see below), but they assume that $X(\real)$ is orientable and non empty. Consequently we cannot use their result in our situation.
\end{rem}

\begin{rem}
In fact, Theorem \ref{thm:norealpoints} is valid under the less restrictive hypothesis that $X(\Rb)$ has cohomological dimension $\leqslant d-3$. It suffices to observe that Proposition \ref{prop:corank_one_primary_obstruction}, Corollary \ref{cor:comparison_secondary_obstruction_group_first_part} and Corollary \ref{cor:eventotwo} hold in this context. 
\end{rem}

\subsection{Varieties with open real locus}

\begin{thm}\label{thm:nocompactcomponent}
Let $X$ be a smooth affine real variety of dimension $d\geq 3$ such that $X(\real)$ has no connected compact component. If $\Esc$ is a locally free $\Osc_X$-module of rank $d-1$, then $\Esc\simeq \Esc'\oplus \Osc_X$ if and only if $e(\Esc)=0$.
\end{thm}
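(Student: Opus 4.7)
The plan is to mirror the argument for Theorem~\ref{thm:norealpoints}. The ``only if'' direction is automatic from naturality of the Euler class, since a rank-one trivial summand yields a nowhere-vanishing section. For the converse, assuming $e(\Esc) = 0$, the Moore--Postnikov obstruction theory of Section~\ref{sec:motivicsplitting} applies: the primary obstruction vanishes, so the classifying map $X_+ \to \mathrm{BGL}_{d-1}$ lifts to the next stage $E$, and the problem reduces to showing that the secondary obstruction, which lives in a subquotient of $\Hr^d(X, \piaone_{d-1}(\A^{d-1}\smallsetminus\{0\})(\Lc))$ with $\Lc = \det \Esc$, also vanishes.

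The decisive topological input is that the hypothesis forces every connected component of the $d$-manifold $X(\real)$ to be non-compact, so its top singular cohomology vanishes with coefficients in any local system. In particular $\Hr^d(X(\real), \Z(L)) = 0$ for every topological line bundle $L$, and $\Hr^d(X(\real), \Z/2) = 0$. This plays the role, in the present setting, of the trivial vanishing exploited in Theorem~\ref{thm:norealpoints} where $X(\real) = \emptyset$.

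Granted this topological vanishing, I would assemble the same ingredients as in the proof of Theorem~\ref{thm:norealpoints} to conclude $\Hr^d(X, \piaone_{d-1}(\A^{d-1}\smallsetminus\{0\})(\Lc)) = 0$. For $d \geq 5$ this uses the short exact sequence~\eqref{eqn:firstnontrivialhomotopysheaf}: Corollary~\ref{cor:eventotwo} identifies $\Hr^d(X, \KM_{d+1}/24) \cong \Hr^d(X(\real), \Z/2) = 0$, and Corollary~\ref{cor:comparison_secondary_obstruction_group_first_part} gives $\Hr^d(X, \mathbf{GW}_d^{d-1}(\Lc)) = 0$ because its target $\Hr^d(X(\real),\Z/2)/\mathrm{Sq}^1_L\overline{\gamma_\real}(\Ch^{d-1}(X))$ is trivial. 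For $d \in \{3, 4\}$, the proof of Theorem~\ref{thm:norealpoints} also requires the vanishing of $\Hr^d(X, \Ibf^{d+j}(\Lc))$ for $j \geq 2$, which Jacobson's Theorem~\ref{theo:jacobson} identifies with $\Hr^d(X(\real), \Z(L)) = 0$.

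The step I expect to be the main obstacle, or at least the only one requiring genuine care, is verifying that ``no compact connected component'' really implies $\Hr^d(X(\real), \Z(L)) = 0$ for every local system $L$: this is a classical fact about non-compact manifolds, following from Poincaré--Lefschetz duality by identifying the top cohomology with $\Hr_0^{BM}$ of a non-compact connected space (which vanishes), but it must be stated with some care owing to the twisted coefficients. Once this topological vanishing is in hand, the proof is essentially the proof of Theorem~\ref{thm:norealpoints} with this new source of vanishing replacing the trivial one coming from $X(\real) = \emptyset$; the very same argument then delivers the stronger version mentioned in the remark following the theorem, under the weaker hypothesis that $\Hr^d(X(\real), \Z(L)) = 0$ for every topological line bundle $L$.
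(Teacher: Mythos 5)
Your proposal is correct and follows essentially the same route as the paper: reduce to the vanishing of the secondary obstruction and kill both parts of it using $\Hr^d(X(\real),\Z/2)=\Hr^d(X(\real),\Z(L))=0$ for non-compact components (the paper phrases the vanishing of the $\mathbf{GW}_d^{d-1}$-part directly via $\Ch^d(X)=0$ from Colliot-Thélène--Scheiderer, which is the same fact underlying Corollary~\ref{cor:comparison_secondary_obstruction_group_first_part}).
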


\begin{proof}
As in the proof of the previous theorem, it suffices to prove that the secondary obstruction vanishes. The first part vanishes since $\Ch^d(X)=0$ by \cite[Theorem 1.3.(b)]{Colliot96}, and the second part vanishes since $\Hr^d(X(\real),\Z)=\Hr^d(X(\real),\Z/2)=0$ as $X(\Rb)$ has no compact connected component.
\end{proof}

\begin{rem}
In case $X$ is orientable, we could instead use \cite[Theorem 1.1]{Das17} to conclude. 
\end{rem}

\begin{rem}
As in the previous section, one may prove the theorem under weaker conditions. Indeed, it suffices to assume that $\Hr^d(X(\real),\Z)=0$ to obtain the same result. 
\end{rem}

{\begin{footnotesize}
\raggedright
\bibliographystyle{alpha}
\bibliography{splitting}
\end{footnotesize}}

\Addresses

\end{document}